\newcommand{\mZ}{\mathbb{Z}}
\newcommand{\mQ}{\mathbb{Q}}
\newcommand{\mG}{\mathbb{G}}
\newcommand{\mN}{\mathbb{N}}
\newcommand{\mA}{\mathbb{A}}
\newcommand{\mP}{\mathbb{P}}
\newcommand{\cO}{\mathcal{O}}
\newcommand{\wt}[1]{\widetilde{#1}}
\DeclareSymbolFont{cyrletters}{OT2}{wncyr}{m}{n}
\DeclareMathSymbol{\Sha}{\mathalpha}{cyrletters}{"58}
\DeclareMathSymbol{\Sha}{\mathalpha}{cyrletters}{"58}
\newcommand{\brk}[1]{ \left\lbrace #1 \right\rbrace }
\newcommand{\sF}{{\mathscr F}}
\newcommand{\sM}{{\mathscr M}}
\newcommand{\sO}{{\mathscr O}}
\newcommand{\sU}{{\mathscr U}}
\newcommand{\sV}{{\mathscr V}}
\newcommand{\sX}{{\mathscr X}}
\newcommand{\sY}{{\mathscr Y}}
\newcommand{\sZ}{{\mathscr Z}}
\theoremstyle{theorem}
\numberwithin{equation}{subsection}
\newtheorem{thmx}{\text{Theorem}}
\newtheorem{corox}[thmx]{\text{Corollary}}
\newtheorem{subtheorem}[subsubsection]{Theorem}
\newtheorem{lemma}[subsection]{Lemma}
\newtheorem{sublemma}[subsubsection]{Lemma}
\newtheorem{subcorollary}[subsubsection]{Corollary}
\newtheorem{subprop}[subsubsection]{Proposition}
\newtheorem{subconjecture}[subsubsection]{Conjecture}
\numberwithin{equation}{subsection}
\theoremstyle{definition}
\newtheorem{subdefinition}[subsubsection]{\text{Definition}}
\newtheorem{subremark}[subsubsection]{Remark}
\newtheorem{subexample}[subsubsection]{Example}
\theoremstyle{remark}
\numberwithin{equation}{subsubsection} \numberwithin{figure}{section}
\DeclareMathOperator{\ad}{ad}
\DeclareMathOperator{\an}{an}
\DeclareMathOperator{\Pic}{Pic} 
 \DeclareMathOperator{\Spec}{Spec}
\DeclareMathOperator{\Spa}{Spa}
\DeclareMathOperator{\Hom}{Hom}
\DeclareMathOperator{\GL}{GL}
\DeclareMathOperator{\codim}{codim}
\newcommand{\cdef}[1]{\textsf{\textit{#1}}}
\newcommand\GG{\mathbb{G}}
\newcommand\Ga{\GG_\mathrm{a}}
\renewcommand{\leq}{\leqslant}
\renewcommand{\geq}{\geqslant}
\DeclareMathOperator{\Exc}{Exc}
\DeclareMathOperator{\Ext}{Ext}
\DeclareMathOperator{\uU}{U}
\DeclareMathOperator{\T}{T}
\DeclareMathOperator{\A}{A}
\begin{document}

\title{The non-Archimedean Green--Griffiths--Lang--Vojta conjecture for commutative algebraic groups with unipotent rank 1}

\author{Jackson S. Morrow}
\address{Jackson S. Morrow \\
	Department of Mathematics\\
	University of North Texas \\
	Denton, TX 76203, USA}
\email{jackson.morrow@unt.edu}

\author{Paul Vojta}
\address{Paul Vojta \\
	Department of Mathematics\\
	University of California, Berkeley \\
	Berkeley, CA 94720, USA}
\email{vojta@math.berkeley.edu}

\begin{abstract}
Let $k$ be algebraically closed field of characteristic zero, let $G$ be a commutative algebraic group over $k$ such that the linear part of $G$ is isomorphic to $\mathbb{G}_a$, and let $X$ be a closed subvariety of $G$. 
We show that the Kawamata locus of $X$ is equal to a Lang-like exceptional locus of $X$, and furthermore, we identify a condition on $X$ that implies that these loci are proper subschemes of $X$.  
We also prove the strong form of the non-Archimedean Green--Griffiths--Lang--Vojta conjecture for closed subvarieties of  commutative algebraic groups where the linear part is isomorphic to $\mathbb{G}_a \times \mG_{m}^t$. 
\end{abstract}

\subjclass
{\href{https://mathscinet.ams.org/msc/msc2020.html?t=14L10\&s=14G05\&btn=Search\&ls=Ct}{14G05}
(\href{https://mathscinet.ams.org/msc/msc2020.html?t=14G05&s=32Q45&btn=Search&ls=Ct}{32Q45}, 
\href{https://mathscinet.ams.org/msc/msc2020.html?t=32Q45&s=14L10&btn=Search&ls=Ct}{14L10}, 
\href{https://mathscinet.ams.org/msc/msc2020.html?t=14L10&s=14G22&btn=Search&ls=Ct}{14G22})}

\keywords{Green--Griffiths--Lang--Vojta conjectures, commutative algebraic groups, non-Archimedean geometry}
\date{\today}
\maketitle


\section{\bf Introduction}
In this work,  we prove algebraic and non-Archimedean analytic results concerning the structure of certain exceptional loci of closed subvarieties of commutative algebraic groups with unipotent rank one. 

Let $k$ be an algebraically closed field of characteristic zero and let $X$ be a smooth projective $k$-variety. 
A conjecture of Bombieri--Lang\footnote{Bombieri formulated this conjecture in the case of surfaces.} \cite{Lang60} predicts that when $X$ is of general type,  $X$ is arithmetically hyperbolic, which means that there exists a Zariski closed proper subscheme $Z$ of $X$, called the exceptional locus of $X$, such that for any finitely generated subfield $k'$ of $k$, $X$ has only finitely many $k'$-rational points outside of $Z(k')$.  
There are several conjectural descriptions of this subscheme that are related to algebraic, complex analytic, and non-Archimedean analytic characterizations of being of general type, and the conjectures of Green, Griffiths,  and Lang describe the precise relationship between these characterizations
There is also a variant of this conjecture due to the second author concerning integral points on non-projective $k$-varieties which are of logarithmic general type. 
We refer the reader to \cite{JBook} for further discussion on these notions and the precise conjectures. 

Most of the progress on the Lang--Vojta conjecture has focused on the setting where $X$ is a closed subvariety of a commutative algebraic $k$-group $G$.
When $G$ is complete (i.e., $G$ is an abelian $k$-variety), Faltings \cite{FaltingsLang1, FaltingsLang2} proved that the intersection of $X$ with a finitely generated subgroup $\Gamma \subset A(k)$ is a finite union of cosets of subgroups of $\Gamma$. 
The second author \cite{Vojta:IntegralPoints1, Vojta:Integralpoints2} proved this result holds in the more general setting where $G$ is a semi-abelian $k$-variety (i.e., an extension of an abelian $k$-variety by a torus). 
Additionally, McQuillan \cite{McQuillian:DivisionPointsSemiAb} showed that the second author's results can be extended where $\Gamma$ is replaced by any finite rank subgroup of a semi-abelian $k$-variety. 
In each of these settings,  one can use a property of morphisms between semi-abelian $k$-varieties (\autoref{thm:Iitakagrouphom}) to show that the exceptional locus of $X$ is equal to the Kawamata locus of $X$ (\autoref{defn:Kawamatalocus}). 
Furthermore, results of Abramovich, Kawamata, and Ueno \cite{Ueno, Kawamata, Abram} proved that when $X$ is of log-general type,  the Kawamata locus of $X$ is a proper subscheme of $X$, which provides a positive answer to the Lang--Vojta conjectures. 
An important aspect in this setting is that the Lang exceptional locus of $X$ is equal to the Kawamata locus of $X$. 

It is natural to try and extend these kinds of results to a general commutative algebraic $k$-group $G$. 
However, there are various examples of $G$ for which the linear part $H$ of $G$ has $\dim(H)\geq 2$ and is not a torus, such that there exists a closed subvariety $X$ of $G$ which is not a coset of an algebraic subgroup of $G$ and meets a finitely generated subgroup of $G(k)$ in a Zariski dense subset.  We refer the reader to \cite[Examples 1.1 and 1.2]{Ghiocaetal:VariantML} for two such examples.  
These examples illustrate that, in general, the Lang exceptional locus of $X$ cannot equal the Kawamata locus of $X$, and the only possible situation for which such a result could hold is when $G$ is an extension of an abelian $k$-variety by a copy of $\mG_{a}$.  
We will refer to such a $G$ as a commutative algebraic $k$-group with unipotent rank 1 and toric rank 0 (see \autoref{defn:unipotent} for an explanation of this terminology). 

The goal of our work is threefold. 
First, we study the relationship between the exceptional locus and the Kawamata locus for commutative algebraic $k$-group $G$ with unipotent rank 1 and toric rank 0. 
Second, we identify a condition on a subvariety $X$ of $G$ which will guarantee that these loci are proper closed subschemes of $X$, and finally, we use these results to prove non-Archimedean variants of the Green--Griffiths--Lang--Vojta conjectures for such $X$. 

\subsection*{Statement of algebraic results}
First, we show that when $G$ has unipotent rank 1 and toric rank 0, the exceptional locus of a closed subvariety $X$ of $G$ is equal to the Kawamata locus of $X$. 

\begin{thmx}\label{thmx:main0}
Let $k$ be an algebraically closed field of characteristic zero, let $G$ be a commutative algebraic $k$-group with unipotent rank 1 and toric rank 0, and let $X$ be a closed subvariety of $G$. 
The Lang-like exceptional locus of $X$ (\autoref{defn:Langlike}) is equal to the Kawamata locus of $X$ (\autoref{defn:Kawamatalocus}). 
\end{thmx}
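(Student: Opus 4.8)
The plan is to prove the asserted equality by establishing the two inclusions separately, with the rigidity statement for morphisms of algebraic groups (\autoref{thm:Iitakagrouphom}) carrying the weight in the harder direction. I would begin by unwinding both definitions against the structure of $G$: since $G$ has unipotent rank $1$ and toric rank $0$, it fits in an extension $0 \to \mG_a \to G \xrightarrow{\pi} A \to 0$ with $A$ an abelian variety, and every positive-dimensional connected algebraic subgroup $B \subseteq G$ either maps isomorphically onto a positive-dimensional abelian subvariety of $A$ (when $B \cap \mG_a$ is trivial) or contains $\mG_a$ with $\pi(B)$ an abelian subvariety. This gives an explicit description of the cosets $x + B \subseteq X$ whose Zariski closure defines the Kawamata locus (\autoref{defn:Kawamatalocus}), and correspondingly of the subvarieties of $X$ that generate the Lang-like exceptional locus (\autoref{defn:Langlike}).

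For the inclusion of the Kawamata locus into the Lang-like exceptional locus, I would observe that each positive-dimensional coset $x + B \subseteq X$ is the image of the nonconstant morphism $B \to G$, $b \mapsto x + b$, which is a translate of the inclusion homomorphism; such a coset is therefore among the subvarieties defining \autoref{defn:Langlike}. Passing to Zariski closures shows that the Kawamata locus is contained in the Lang-like exceptional locus, and this direction should be essentially formal once the definitions are in hand.

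The reverse inclusion is where the real content sits, and it is the step I expect to be the main obstacle. Given a positive-dimensional subvariety $W \subseteq X$ of the type generating the Lang-like exceptional locus---namely the image of a nonconstant morphism $f \colon H \to G$ from a commutative algebraic $k$-group $H$ with $f(H) \subseteq X$---the goal is to show that $W$ is a translated subgroup, hence part of the Kawamata locus. By \autoref{thm:Iitakagrouphom}, after composing with a translation we may take $f$ to be a homomorphism, so $W = f(H)$ is a coset of the algebraic subgroup $f(H)^{\circ} \subseteq G$; since $W \subseteq X$, it is one of the cosets defining the Kawamata locus, and taking closures shows that the Lang-like exceptional locus is contained in the Kawamata locus. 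The delicate point, and the reason the rank hypotheses are essential, is that naive rigidity fails in the presence of an additive factor: a morphism $\mG_a \to \mG_a$ fixing the identity (for instance $t \mapsto t^2$) need not be a homomorphism, and this is exactly the source of the counterexamples of \cite{Ghiocaetal:VariantML} when $\dim(H) \geq 2$ and $H$ is not a torus. I would therefore need to verify that \autoref{thm:Iitakagrouphom} genuinely applies in the unipotent-rank-$1$ case, controlling the additive direction of $f$ by projecting to the abelian quotient $A$---where rigidity of morphisms of abelian varieties is classical---and using that a single $\mG_a$-factor leaves no room for the higher-degree pathologies that obstruct the result in higher unipotent rank. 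Combining the two inclusions then yields the theorem.
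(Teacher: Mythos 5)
There is a genuine gap at the heart of the reverse inclusion. You invoke \autoref{thm:Iitakagrouphom} to replace $f\colon H\to G$ by a translate of a homomorphism, but that theorem applies only when both source and target are \emph{semi-abelian}, and the whole point of the hypothesis ``unipotent rank $1$'' is that $G$ is \emph{not} semi-abelian. You acknowledge this and propose to repair it by projecting to $A$ and using rigidity there, but that only controls the composite $\pi\circ f\colon H\to A$; it says nothing about the $\mG_a$-direction, which is exactly where the difficulty lives. Concretely: for $H=\mG_m$ the map $t\mapsto t+t^{-1}$ into the fiber $\mG_a\subseteq G$ is nonconstant and is in no way a translated homomorphism (the only homomorphism $\mG_m\to\mG_a$ in characteristic zero is trivial), so your assertion ``$W=f(H)$ is a coset of $f(H)^{\circ}$'' is literally false in this case; what saves the statement is that the \emph{Zariski closure} of the image is the translate of $\mG_a$, which is the content of \autoref{lemma:toricKawamata} (using that $\mG_m\to A$ extends to $\mP^1\to A$ and is hence constant). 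For $H$ an abelian variety $A'$, knowing that $\pi\circ\varphi\colon A'\to A$ is a translated homomorphism still leaves open the possibility that $\varphi$ is a non-group-theoretic section over its image in $A$, and ruling this out is the actual content of \autoref{prop:KawamataequalLanglike}: the paper passes to the equivariant completion $\overline{G}=\mP(\mathcal{E})$, uses the triviality of the conormal sheaf at infinity (\autoref{lemma:conormaltrivial}) to compute that the image $D$ of a section satisfies $D^{2}=0$ and is disjoint from the divisor at infinity, concludes that translates of $D$ are equal or disjoint and hence that $D$ is a subgroup, and then reduces the general morphism $A'\to G$ to this case via Poincar\'e reducibility and a minimal-subproduct argument. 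None of this is a ``verification that Iitaka applies''; it is an independent geometric argument, and your proposal does not supply a substitute for it.

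Your overall architecture (split by the type of source group, handle the formal inclusion, reduce to showing images are translated subgroups) does match the paper's decomposition into $\Exc'_{\T}$, $\Exc'_{\A}$ and $Z_{\uU}$, $Z_{\T}$, $Z_{\A}$, and the easy inclusion is fine. But as written the hard inclusion rests on a rigidity statement that does not hold for the groups in question, so the proof is incomplete precisely where the theorem has content.
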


In the proof of \autoref{thmx:main0}, we identify a condition on $X$, which we call not fibered by subgroups, that allows us to deduce when the Kawamata locus of $X$ is a proper closed subscheme of $X$. 

\begin{thmx}\label{thmx:main1}
Let $k$ be an algebraically closed field of characteristic zero, let $G$ be a commutative algebraic $k$-group with unipotent rank 1 and toric rank 0, and let $X$ be a closed subvariety of $G$. 
If $X$ is not fibered by subgroups (\autoref{defn:notfibered}), then the Kawamata locus of $X$ is a proper closed subscheme of $X$. 
\end{thmx}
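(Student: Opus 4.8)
The plan is to prove the contrapositive, reducing the statement to an Ueno-type structure description of the Kawamata locus that I would establish first. The description I need is: the Kawamata locus $\operatorname{Kaw}(X)$ of \autoref{defn:Kawamatalocus}, being a closed subscheme of the Noetherian scheme $X$, has finitely many irreducible components, and each such component $W$ is invariant under translation by a positive-dimensional connected algebraic subgroup $B_W \le G$ (equivalently $W + B_W = W$, so that $W$ is a union of cosets of $B_W$). Granting this, the theorem follows formally, so the substance of the work is the structure statement itself.

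Assuming the structure statement, suppose toward a contradiction that $\operatorname{Kaw}(X) = X$. Since $X$ is irreducible and equals the finite union of the closed components of $\operatorname{Kaw}(X)$, it must coincide with one of those components, say $X = W$. Hence $X + B_W = X$ for a positive-dimensional connected subgroup $B_W$, and the quotient map $q \colon G \to G/B_W$ exhibits $X = q^{-1}(q(X))$ as a union of cosets of $B_W$; that is, $X$ is fibered by subgroups in the sense of \autoref{defn:notfibered}. This contradicts the hypothesis, so $\operatorname{Kaw}(X) \subsetneq X$, and since $\operatorname{Kaw}(X)$ is closed, it is a proper closed subscheme, as required.

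The real work, and the main obstacle, is the structure statement for $G$ of unipotent rank $1$. In the semi-abelian (here, abelian) case it is precisely the content of the Ueno--Kawamata--Abramovich results \cite{Ueno, Kawamata, Abram}, so the task is to propagate it across the extension $0 \to \Ga \to G \xrightarrow{\pi} A \to 0$, where $A$ is the abelian quotient. I would organize the argument around $\pi$ and the dichotomy for positive-dimensional connected subgroups $B \le G$: either $\Ga \subseteq B$, in which case $B = \pi^{-1}(\bar B)$ for an abelian subvariety $\bar B \le A$; or $B \cap \Ga = 0$, in which case $\pi|_B$ identifies $B$ with an abelian subvariety of $A$ over which the extension splits. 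Correspondingly I would split $\operatorname{Kaw}(X)$ into the part swept out by cosets containing the $\Ga$-direction and the part swept out by cosets transverse to it.

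For the first part, cosets with $\Ga \subseteq B$ are $\pi$-saturated, so their contribution is governed by the largest $\Ga$-invariant closed subscheme of $X$ together with the abelian structure theorem on $A$; in particular, if $\dim \pi(X) < \dim X$ then the generic fibre of $X \to \pi(X)$ fills an entire $\Ga$-fibre, forcing $X + \Ga = X$ and hence $X$ fibered by $\Ga$ outright. For the transverse part one projects by $\pi$, applies the abelian case to $\pi(X) \subseteq A$, and then lifts the resulting coset-families back to $X$. I expect the lifting to be the delicate point: a coset $y + \bar B \subseteq \pi(X)$ need not lift to a coset of $G$ contained in $X$, so one must track exactly which cosets downstairs admit a lift landing inside $X$, using that the ambient unipotent rank is $1$ to keep the bookkeeping finite. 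Assembling the two parts yields the finiteness and translation-invariance of the components of $\operatorname{Kaw}(X)$, which completes the structure statement and hence the theorem.
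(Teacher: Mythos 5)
Your opening reduction is sound, and your ``first part'' (cosets containing the $\mG_a$-direction) is essentially the paper's \autoref{lemma:unipotentKLclosed}: the saturation $\pi^{-1}\bigl((G/\mG_a)\setminus\pi(G\setminus X)\bigr)$ is closed, and if it equals $X$ then $X$ is fibered by $\mG_a$. The gap is in the transverse (abelian) part, and it sits exactly where you say you ``expect the lifting to be the delicate point'': you never resolve it, and it is where all of the paper's technical work lives. Two inputs are missing. First, you need to know that the translates sweeping out the abelian part of the Kawamata locus can be controlled at all; the paper does this by proving (\autoref{prop:KawamataequalLanglike}, via the equivariant completion $\overline{G}=\mP(\mathcal{E})$ and the intersection computation $D^2=G_\infty^2=0$) that the image of \emph{any} morphism from an abelian variety to $G$ is a translate of an algebraic subgroup. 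Second, and more seriously, you need a \emph{largest} abelian subvariety of $G$ to quotient by before any Kawamata-type structure argument can run; the paper flags explicitly (footnote to Section 6.2) that the standard semi-abelian argument for this does not apply when the linear part is $\mG_a$, and proves existence by hand (\autoref{lemma:largestabeliantoric}--\autoref{lemma:largestabeliangeneral}, bounding isogeny degrees by the order of a torsion class in $\Pic^0$). Only after quotienting $G\to G/B$ by that largest abelian subvariety does the paper invoke Bogomolov's proof of the Kawamata structure theorem to get properness (\autoref{prop:abelianKawamataproper}).

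Your proposed substitute --- project to $A$, apply the abelian-variety case to $\pi(X)$, and lift cosets back --- does not obviously close this gap. A coset $\bar y+\bar B\subseteq\pi(X)$ contributes to $Z(X)$ only when the extension $0\to\mG_a\to G\to A\to 0$ splits over $\bar B$ \emph{and} the resulting lift (which is unique as a subgroup, since $\Hom(\bar B,\mG_a)=0$) can be translated into $X$; you give no mechanism for showing that the union of these contributions, over all admissible $\bar B$ and all admissible translations, is closed and proper in $X$ under the not-fibered hypothesis. Closedness of $Z(X)$ itself you could import from \autoref{thm:Kawamataclosed}, but the translation-invariance of its components (your ``structure statement'') in unipotent rank $1$ is precisely the theorem being proved, so assuming it is circular. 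To repair the proposal you would need to prove the two structural inputs above, at which point you would essentially be reproducing the paper's argument.
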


As a corollary to \autoref{thmx:main1}, we relate our condition of not fibered by subgroups to a conjectural characterization of being of log-general type. 

\begin{corox}\label{corox:main4}
Let $k$ be an algebraically closed field of characteristic zero, let $G$ be a commutative algebraic $k$-group with unipotent rank 1 and toric rank 0, and let $X$ be a closed subvariety of $G$. 
If $X$ is not fibered by subgroups (\autoref{defn:notfibered}), then $X$ is pseudo-groupless (\autoref{defn:grouplessandpseudo}). 
\end{corox}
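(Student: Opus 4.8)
The plan is to take the Kawamata locus $Z$ of $X$ (\autoref{defn:Kawamatalocus}) as the candidate exceptional subscheme witnessing pseudo-grouplessness (\autoref{defn:grouplessandpseudo}). By \autoref{thmx:main1}, the hypothesis that $X$ is not fibered by subgroups (\autoref{defn:notfibered}) guarantees that $Z$ is a proper closed subscheme of $X$, and by \autoref{thmx:main0} it coincides with the Lang-like exceptional locus (\autoref{defn:Langlike}). It then remains to verify that $Z$ absorbs the image of every non-constant morphism from a connected algebraic group; concretely, I would show that for any connected algebraic $k$-group $H$ and any non-constant morphism $f\colon H \to X$, the Zariski closure $\overline{f(H)}$ is contained in $Z$.

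The key structural lemma I would isolate is that, for $G$ with unipotent rank $1$ and toric rank $0$, the closure $\overline{f(H)}$ of such an image is a translate of a positive-dimensional connected algebraic subgroup $B \subseteq G$; any such translate lying in $X$ belongs to $Z$ by definition. To prove the lemma, write $0 \to \Ga \to G \xrightarrow{\pi} A \to 0$ with $A$ an abelian variety (this is exactly the unipotent rank $1$, toric rank $0$ hypothesis). After translating so that $f(e_H) = 0$, the composite $\psi := \pi \circ f \colon H \to A$ is a homomorphism: its restriction to the linear part of $H$ is constant because $A$ admits no non-constant image of $\Ga$ or $\Gm$, so $\psi$ factors through the abelian quotient of $H$, where rigidity applies. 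Set $B_A := \psi(H)$ and $Y := \overline{f(H)} \subseteq \pi^{-1}(B_A)$. Since the fibers of $\pi$ are $\Ga$-cosets, either $\dim Y = \dim B_A + 1$, forcing $Y = \pi^{-1}(B_A)$, an algebraic subgroup; or $\dim Y = \dim B_A$, in which case I would show that $f$ is constant on the identity component $H_0$ of $\ker \psi$ (otherwise the image would contain a full $\Ga$-coset and raise the dimension of $Y$), so that $f$ descends to $\bar f\colon H/H_0 \to G$ with $H/H_0$ an abelian variety isogenous to $B_A$; by rigidity $\bar f$ is a homomorphism, whence $Y = \bar f(H/H_0)$ is a subgroup. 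In either case $Y$ is a translate of a positive-dimensional subgroup, as claimed.

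The main obstacle is precisely this lemma, and specifically the control of the $\Ga$-direction: unlike the semi-abelian case, where \autoref{thm:Iitakagrouphom} immediately presents every morphism as a translate of a homomorphism, morphisms out of $\Ga$ or $\Gm$ need not be homomorphisms, so one cannot simply quote Iitaka's theorem. The unipotent rank $1$ and toric rank $0$ hypotheses are what rescue the argument: they force the image of $\Ga$ (respectively $\Gm$) into a single $\Ga$-coset and thereby let the dimension count above dichotomize cleanly, while the rigidity of abelian varieties handles the complementary direction. Once the lemma is in hand, combining it with the properness of $Z$ from \autoref{thmx:main1} and the identification in \autoref{thmx:main0} exhibits $Z$ as a proper closed subscheme containing every non-constant image of a connected algebraic group, which is exactly the assertion that $X$ is pseudo-groupless.
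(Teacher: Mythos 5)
Your top-level strategy is the same as the paper's: take $\Delta = Z(X)$, get properness from \autoref{thmx:main1}, identify $Z(X)$ with $\Exc'(X)$ via \autoref{thmx:main0}, and conclude. The paper's entire proof is exactly this, combined with \autoref{lemma:testgroupless}.(2). Where you diverge is in then proving a ``key structural lemma'' that the closure of the image of any non-constant morphism $H \to X$ is a translated positive-dimensional subgroup. This lemma is redundant: once you have invoked \autoref{thmx:main0}, the set $Z(X)$ equals $\Exc'(X)$, which by \autoref{defn:Langlike} is \emph{defined} as the Zariski closure of the union of the images of all rational maps from algebraic groups to $X$ defined in codimension $1$; so every such image factors through $\Delta$ tautologically, and no further structure theory is needed. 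Your lemma essentially re-derives (a streamlined version of) \autoref{prop:KawamataequalLanglike} and \autoref{lemma:toricKawamata}, which are already the content of \autoref{thmx:main0}. It is also worth flagging that one step of that lemma is justified too loosely: a single one-dimensional fiber of $Y \to B_A$ over $0$ (coming from $f$ being non-constant on $H_0$) does not by itself force $\dim Y = \dim B_A + 1$, since fiber dimension is only upper semicontinuous; you need to argue via the generic fiber being finite that $f$ is constant on the generic coset $hH_0$, and then spread this to all cosets by a closedness argument. Since the lemma is not needed, this does not sink the proof.

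The one genuine gap relative to the paper's definitions is that \autoref{defn:grouplessandpseudo}.(2) quantifies over non-constant morphisms $U \to X$ from dense open subschemes $U \subseteq G'$ with $\codim(G'\setminus U) \geq 2$, not merely over morphisms defined on all of a connected algebraic group $H$. Your argument only treats the latter. The fix is exactly what the paper cites: by \autoref{lemma:testgroupless}.(2) (or \autoref{lemma:grouptestgroup}) it suffices to test on $\Gm$ and on big opens of abelian varieties, and since $X$ sits inside an algebraic group, such rational maps extend to the whole abelian variety by \cite[Theorem 4.4.1]{BLR}; their images are then captured by $\Exc'(X)$ by \autoref{lemma:LanglikeGmAV}. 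With that reduction added, and with the superfluous lemma removed, your argument coincides with the paper's.
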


\subsection*{Statement of non-Archimedean analytic results}
We also prove several results concerning a non-Archimedean variant of the Green--Griffiths--Lang--Vojta conjectures. 
We refer the reader to \autoref{conj:strongNAGGLV} for details on the precise statement of this conjecture.

In the non-Archimedean context, our main result shows that the analytic Zariski closure of the the image of a non-Archimedean entire curve in a commutative algebraic $K$-group with unipotent rank 1 is the translate of an algebraic subgroup (\autoref{thm:ZCalgebraicgroup}). 
We note that this result does not impose having toric rank 0. 
Using this, we are able to prove the following.

\begin{thmx}\label{thmx:main2}
Let $K$ be an algebraically closed, complete, non-Archimedean non-trivially valued field of characteristic zero, let $G$ be a commutative algebraic $K$-group  with unipotent rank 1, and let $X$ be a closed subvariety of $G$. 
Then, $X^{\an}$ is $K$-analytically Brody hyperbolic modulo the analytification of the Lang-like exceptional locus of $X$ (\autoref{defn:KBrody}). 
\end{thmx}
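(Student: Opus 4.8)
The plan is to reduce the statement to the structural result on entire curves, \autoref{thm:ZCalgebraicgroup}, and then to unwind the definition of the Lang-like exceptional locus. First I would fix a non-constant entire curve $f \colon \Gm^{\an} \to X^{\an}$; by the definition of $K$-analytic Brody hyperbolicity modulo a closed subscheme (\autoref{defn:KBrody}), it suffices to show that the image $f(\Gm^{\an})$ is contained in the analytification of the Lang-like exceptional locus of $X$.

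Next I would observe that, since $X$ is a closed subvariety of $G$, the composite $\Gm^{\an} \xrightarrow{f} X^{\an} \hookrightarrow G^{\an}$ is a non-Archimedean entire curve in the commutative algebraic $K$-group $G$, which has unipotent rank $1$. Applying \autoref{thm:ZCalgebraicgroup}, the analytic Zariski closure $Y$ of $f(\Gm^{\an})$ in $G^{\an}$ equals the analytification $(a+H)^{\an}$ of a translate of an algebraic subgroup $H \subseteq G$, for some $a \in G(K)$. Since $X^{\an}$ is analytically Zariski closed in $G^{\an}$ and contains $f(\Gm^{\an})$, it also contains $Y = (a+H)^{\an}$; because analytification reflects inclusions of closed subschemes, this would yield the algebraic containment $a+H \subseteq X$. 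As $f$ is non-constant, its image has more than one point, so $a+H$ is positive-dimensional, i.e.\ $\dim H \geq 1$.

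It would then remain to match this translate with the Lang-like exceptional locus (\autoref{defn:Langlike}), which contains every positive-dimensional translate of an algebraic subgroup of $G$ that is contained in $X$. The coset $a+H$ is exactly such a translate, so it lies in the Lang-like exceptional locus, whence $f(\Gm^{\an}) \subseteq (a+H)^{\an}$ lies in its analytification. Since $f$ is an arbitrary non-constant entire curve, this shows that $X^{\an}$ is $K$-analytically Brody hyperbolic modulo the analytification of the Lang-like exceptional locus of $X$.

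I do not expect a substantive obstacle at this stage, as all of the genuinely analytic content---that entire curves in $G$ have translated-subgroup analytic Zariski closures---is already isolated in \autoref{thm:ZCalgebraicgroup}, and the argument above is a formal consequence of it. The points that require care are bookkeeping: checking that the inclusion $(a+H)^{\an} \subseteq X^{\an}$ of analytic subspaces descends to the algebraic inclusion $a+H \subseteq X$, so that the conclusion of \autoref{thm:ZCalgebraicgroup} feeds correctly into the algebraically-defined \autoref{defn:Langlike}, and confirming that the conventions in \autoref{defn:KBrody} are expressed in terms of the same class of entire curves $\Gm^{\an} \to X^{\an}$ that appears in \autoref{thm:ZCalgebraicgroup}.
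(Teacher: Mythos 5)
Your treatment of entire curves $\Gm^{\an} \to X^{\an}$ is essentially the paper's argument: apply \autoref{thm:ZCalgebraicgroup}, note that the analytic Zariski closure $(a+H)^{\an}$ of the image is contained in $X^{\an}$, descend to the algebraic inclusion $a+H \subseteq X$, and observe that a positive-dimensional translated subgroup contained in $X$ lies in $\Exc'(X)$ by \autoref{defn:Langlike}. That half is fine.

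The gap is in your very first reduction. \autoref{defn:KBrody}.(2) does \emph{not} test hyperbolicity modulo $\Delta$ only against entire curves: it quantifies over every algebraic $K$-group $G'$ and every open $\sU \subseteq G'^{\an}$ with $\codim(G'^{\an}\setminus\sU)\geq 2$, requiring every non-constant morphism $\sU \to X^{\an}$ to factor over $\Delta^{\an}$. The reduction to a manageable class of test objects is \autoref{lemma:KanBrodyequiv}.(2), and it leaves you with \emph{two} things to verify: the $\Gm^{\an}$ case you treated, and the case of non-constant $K$-analytic morphisms $\sU \to X^{\an}$ from dense opens $\sU \subseteq A^{\an}$ of abelian $K$-varieties with $\codim(A^{\an}\setminus\sU)\geq 2$. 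The latter is not a formal consequence of \autoref{thm:ZCalgebraicgroup} (which says nothing about abelian sources) and is where the remaining analytic input lives: one uses \autoref{prop:extendinganalytic} to extend $\sU \to X^{\an} \hookrightarrow G^{\an}$ across the codimension-two locus to $A^{\an} \to G^{\an}$, then \autoref{coro:extendingtoalgebraic} (via GAGA for abelian varieties) to algebraize it to a morphism $A \to G$; its image is then in $\Exc'(X)$ directly from \autoref{defn:Langlike}. Your closing remark that one should confirm the conventions in \autoref{defn:KBrody} ``are expressed in terms of the same class of entire curves'' flags exactly the right worry, but the answer is that they are not, so the proof as written is incomplete without the abelian-variety branch.
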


Using our algebraic results above, we derive several corollaries. 

\begin{corox}\label{corox:main1}
Let $K$ be an algebraically closed, complete, non-Archimedean non-trivially valued field of characteristic zero, let $G$ be a commutative algebraic $K$-group with unipotent rank 1, and let $X$ be a closed subvariety of $G$. 
$X$ is pseudo-groupless if and only if $X^{\an}$ is pseudo-$K$-analytically Brody hyperbolic. 
\end{corox}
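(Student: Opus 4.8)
The plan is to deduce the equivalence formally from the main analytic input \autoref{thmx:main2}, once we record that pseudo-grouplessness is really a statement about the Lang-like exceptional locus. First I would observe that, unwinding \autoref{defn:grouplessandpseudo} and \autoref{defn:Langlike}, the variety $X$ is pseudo-groupless if and only if its Lang-like exceptional locus is a proper closed subscheme of $X$. Indeed, the image of any non-constant morphism from a connected algebraic group into $X \subseteq G$ is a positive-dimensional translated algebraic subgroup of $G$ contained in $X$, and conversely every such translate arises in this way; the Lang-like exceptional locus is precisely the Zariski closure of the union of these translates, so it is the smallest closed subscheme modulo which $X$ is groupless. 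Thus pseudo-grouplessness of $X$ is equivalent to this locus being distinct from $X$, and I will denote this locus by $Z$ below.

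For the forward implication, suppose $X$ is pseudo-groupless, so that $Z \subsetneq X$ is proper. By \autoref{thmx:main2}, $X^{\an}$ is $K$-analytically Brody hyperbolic modulo $Z^{\an}$. Since $Z$ is a proper closed subscheme, this exhibits a proper closed subscheme modulo which $X^{\an}$ is Brody hyperbolic, so $X^{\an}$ is pseudo-$K$-analytically Brody hyperbolic by \autoref{defn:KBrody}. This is the only place the deep analytic content is used.

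For the reverse implication, suppose $X^{\an}$ is pseudo-$K$-analytically Brody hyperbolic, and fix a proper closed subscheme $W \subsetneq X$ modulo which $X^{\an}$ is $K$-analytically Brody hyperbolic. Each positive-dimensional translated subgroup $B + x \subseteq X$ determines a non-constant morphism $B \to X$, $b \mapsto b + x$, whose analytification $B^{\an} \to X^{\an}$ is a non-constant analytic map from the analytification of a connected algebraic group; by the defining property of $W$ its image $B + x$ is contained in $W$. Taking the Zariski closure over all such translates gives $Z \subseteq W \subsetneq X$, so $Z$ is proper and $X$ is pseudo-groupless.

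The analytic difficulty is entirely absorbed into \autoref{thmx:main2} (which itself rests on the structural result \autoref{thm:ZCalgebraicgroup} for entire curves), so the main obstacle internal to this corollary is conceptual rather than computational: one must ensure that the class of test objects in \autoref{defn:KBrody} is broad enough to match the connected groups appearing in grouplessness. Concretely, the reverse implication uses that a non-constant analytic map out of the analytification of a translated abelian or toric subgroup contained in $X$ already witnesses the failure of Brody hyperbolicity modulo a proper subscheme. This is essential: a non-Archimedean abelian variety or torus admits no non-constant analytic map from $\mathbb{G}_a^{\an}$, so had \autoref{defn:KBrody} tested only such entire curves, an abelian subvariety sitting inside $X$ would be pseudo-$K$-analytically Brody hyperbolic without being pseudo-groupless, and the equivalence would fail. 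It is this matching of test classes --- rather than the algebraic comparison of \autoref{thmx:main0}, which is unavailable once the toric rank is positive --- that lets the statement hold in the full generality of unipotent rank 1.
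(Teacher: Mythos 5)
Your overall strategy coincides with the paper's: reduce pseudo-grouplessness to the properness of the Lang-like exceptional locus $\Exc'(X)$, feed that into \autoref{thmx:main2} for the forward direction, and observe that the reverse direction is essentially formal (the paper does not even spell it out, since an algebraic morphism analytifies to an analytic one, so Brody hyperbolicity modulo $\Delta^{\an}$ immediately gives grouplessness modulo $\Delta$). The forward implication as you state it is exactly the paper's proof, which cites \autoref{lemma:testgroupless}.(2) and \autoref{lemma:LanglikeGmAV} to get $\Exc'(X)\subsetneq X$ and then applies \autoref{thmx:main2}.

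However, the justification you give for the key reduction contains a genuine error. You assert that the image of any non-constant morphism from a connected algebraic group into $X\subseteq G$ is a positive-dimensional \emph{translated algebraic subgroup} of $G$, so that $\Exc'(X)$ is the closure of the union of such translates, i.e.\ essentially the Kawamata locus. This is false once the toric rank is positive: for $G=\Ga\times\Gm\times E$ (with $E$ an elliptic curve), the morphism $z\mapsto(z+z^{-1},z,e)$ from $\Gm$ has image a curve that is not a translate of any subgroup, since $\Hom(\Gm,\Ga)=0$ forces every one-dimensional connected subgroup of $\Ga\times\Gm$ to be a coordinate factor. Identifying $\Exc'(X)$ with the union of translated subgroups is precisely the content of \autoref{thmx:main0}, which requires toric rank $0$ --- and you yourself note in your closing paragraph that this comparison ``is unavailable once the toric rank is positive,'' while tacitly relying on it. The conclusion of your first step is nonetheless correct, but for a different reason: by \autoref{lemma:testgroupless}.(2) (or \autoref{lemma:grouptestgroup}) grouplessness modulo a closed $\Delta$ is detected by morphisms from $\Gm$ and from (big opens of) abelian varieties, and by \autoref{lemma:LanglikeGmbigopenAVtest} the locus $\Exc'(X)$ is exactly the smallest closed subset containing the images of all such morphisms; hence $X$ is groupless modulo a proper closed subscheme if and only if $\Exc'(X)\subsetneq X$, with no claim about the images being translated subgroups. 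The same repair is needed in your reverse implication: rather than checking only translated subgroups $B+x\subseteq X$ land in $W$ (which controls the Kawamata locus, not $\Exc'(X)$), simply note that every non-constant algebraic test morphism analytifies and therefore factors over $W$, so $X$ is groupless modulo $W$.
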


\begin{corox}\label{corox:main0}
Let $K$ be an algebraically closed, complete, non-Archimedean non-trivially valued field of characteristic zero, let $G$ be a commutative algebraic $K$-group  with unipotent rank 1 and toric rank 0, and let $X$ be a closed subvariety of $G$. 
If $X$ is not fibered by subgroups,  then $X^{\an}$ is pseudo-$K$-analytically Brody hyperbolic, and in particular, $X^{\an}$ is $K$-analytically Brody hyperbolic modulo the analytification of the Kawamata locus of $X$.  
\end{corox}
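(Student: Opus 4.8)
The plan is to assemble this corollary entirely from the results established above: both the pseudo-hyperbolicity statement and the sharper ``Brody hyperbolic modulo the Kawamata locus'' statement should be immediate once the algebraic structure theorems are fed into the non-Archimedean input. No new geometric argument ought to be needed, so the whole proof reduces to citing prior results and checking that the hypotheses on $G$ (unipotent rank $1$ and toric rank $0$) match those of each result invoked.

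For the first assertion, I would proceed as follows. Since $X$ is not fibered by subgroups and $G$ has unipotent rank $1$ and toric rank $0$, \autoref{corox:main4} applies and yields that $X$ is pseudo-groupless. Because $G$ has unipotent rank $1$, \autoref{corox:main1} furnishes the equivalence between $X$ being pseudo-groupless and $X^{\an}$ being pseudo-$K$-analytically Brody hyperbolic. Chaining these two implications gives the pseudo-$K$-analytic Brody hyperbolicity of $X^{\an}$.

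For the refinement I would invoke the non-Archimedean hyperbolicity theorem directly. As $G$ has unipotent rank $1$, \autoref{thmx:main2} shows that $X^{\an}$ is $K$-analytically Brody hyperbolic modulo the analytification of the Lang-like exceptional locus of $X$. By \autoref{thmx:main0}, which applies since $G$ additionally has toric rank $0$, the Lang-like exceptional locus of $X$ coincides with the Kawamata locus of $X$; substituting this identification into the conclusion of \autoref{thmx:main2} gives that $X^{\an}$ is $K$-analytically Brody hyperbolic modulo the analytification of the Kawamata locus of $X$, as required. One should also record, for consistency with the first assertion, that \autoref{thmx:main1} guarantees this Kawamata locus is a proper closed subscheme of $X$, so the modulus is genuinely proper and the refinement does strengthen the bare pseudo-hyperbolicity statement.

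Since every step is a direct application of a previously proved statement, I anticipate no genuine obstacle. The only delicate point is bookkeeping: tracking which results require the full ``unipotent rank $1$ and toric rank $0$'' hypothesis (namely \autoref{thmx:main0}, \autoref{thmx:main1}, and \autoref{corox:main4}) against those needing only ``unipotent rank $1$'' (\autoref{thmx:main2} and \autoref{corox:main1}), and confirming that the group in the statement satisfies both sets of hypotheses so that all five results are simultaneously in force.
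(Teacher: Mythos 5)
Your proposal is correct and follows essentially the same route as the paper: the paper's proof simply combines \autoref{thmx:main0} (Kawamata locus equals Lang-like exceptional locus), \autoref{thmx:main2} (Brody hyperbolicity modulo $\Exc'(X)^{\an}$), and \autoref{thmx:main1} (properness of the Kawamata locus), which is exactly your argument for the refinement. Your separate derivation of the first assertion via \autoref{corox:main4} and \autoref{corox:main1} is harmless but redundant, since the refinement together with properness of the Kawamata locus already yields pseudo-$K$-analytic Brody hyperbolicity.
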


\begin{corox}\label{corox:main2}
Let $K$ be an algebraically closed, complete, non-Archimedean non-trivially valued field of characteristic zero.  
Let $G$ be a commutative algebraic $K$-group with unipotent rank 1, and let $X$ be a closed subvariety of $G$. 
Then, $X$ is groupless over $K$ if and only if $X^{\an}$ is $K$-analytically Brody hyperbolic. 
\end{corox}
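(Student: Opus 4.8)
The plan is to promote the ``pseudo'' correspondence already established in \autoref{corox:main1} to this absolute statement by specializing everything to an empty exceptional locus; the analytic engine is \autoref{thmx:main2}, and the genuinely new content is the bookkeeping that matches an empty Lang-like exceptional locus on the algebraic side with the total absence of nonconstant analytic test maps on the analytic side. Writing $\mathrm{Exc}(X)$ for the Lang-like exceptional locus of $X$ (\autoref{defn:Langlike}), I would organize the argument around the chain
\[
 X \text{ is groupless} \iff \mathrm{Exc}(X) = \emptyset \iff X^{\an} \text{ is } K\text{-analytically Brody hyperbolic},
\]
proving the first equivalence purely algebraically and the second analytically.

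For the first (algebraic) equivalence I would unwind \autoref{defn:Langlike} and \autoref{defn:grouplessandpseudo}: by construction $\mathrm{Exc}(X)$ is assembled from the positive-dimensional translates of algebraic subgroups of $G$ that are contained in $X$, so if $X$ is groupless there are no such translates and $\mathrm{Exc}(X)=\emptyset$. Conversely, if $\mathrm{Exc}(X)=\emptyset$ then $X$ contains no positive-dimensional translated subgroup; since \autoref{thm:Iitakagrouphom} forces every nonconstant morphism from a connected commutative algebraic group into $G$ to be a translate of a homomorphism—hence to have a positive-dimensional translated subgroup as its image—there is no nonconstant morphism from a connected group into $X$, i.e.\ $X$ is groupless.

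For the second (analytic) equivalence, the forward implication $\mathrm{Exc}(X)=\emptyset \Rightarrow X^{\an}$ Brody hyperbolic is immediate from \autoref{thmx:main2}: that theorem says $X^{\an}$ is $K$-analytically Brody hyperbolic modulo $\mathrm{Exc}(X)^{\an}$, and modulo the empty set this is exactly $K$-analytic Brody hyperbolicity (\autoref{defn:KBrody}). For the reverse implication I would argue contrapositively. If $\mathrm{Exc}(X) \neq \emptyset$, then $X$ contains a positive-dimensional translate $a + H$ of a connected commutative algebraic subgroup $H \subseteq G$. By the structure theory of connected commutative algebraic groups in characteristic zero, $H$ contains a copy of $\Ga$ or of $\Gm$, or is itself an abelian variety; translating by $a$ and composing with the closed immersion $(a+H)^{\an} \inj X^{\an}$ then produces a nonconstant analytic morphism into $X^{\an}$ from one of the test objects of \autoref{defn:KBrody}, namely $\Gm^{\an}$ (using the open immersion $\Gm^{\an} \inj \Ga^{\an}$ in the additive case) or the analytification of an abelian variety. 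Hence $X^{\an}$ is not $K$-analytically Brody hyperbolic.

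The step that demands real care—and the reason the definition of $K$-analytic Brody hyperbolicity must test $\Gm^{\an}$ and abelian varieties rather than only non-Archimedean entire curves $\mathbb{A}^{1,\an}$—is this last construction. By Cherry's theorem a semiabelian variety admits no nonconstant non-Archimedean entire curve, so were Brody hyperbolicity phrased solely through $\mathbb{A}^{1,\an}$, a translated torus or abelian subvariety inside $X$ would escape detection and the reverse implication would fail. Matching the analytic test class to the Jordan--H\"older constituents $\Ga,\Gm$, and abelian varieties of commutative algebraic groups is precisely what closes the chain, and verifying that these constituents are the \emph{only} obstruction—so that $\mathrm{Exc}(X)=\emptyset$ really does annihilate every test map—is where \autoref{thmx:main2} does the heavy lifting.
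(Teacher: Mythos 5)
Your architecture---routing both implications through the vanishing of the Lang-like exceptional locus and letting \autoref{thmx:main2} carry the analytic weight---is sound, and on the substantive direction (groupless $\Rightarrow$ Brody hyperbolic) it agrees with the paper, whose proof consists of observing that the easy direction is immediate from \autoref{defn:KBrody}.(1) and quoting \autoref{thm:ZCalgebraicgroup} for the converse. But two of your intermediate justifications are wrong as written. You characterize $\Exc'(X)$ as being ``assembled from the positive-dimensional translates of algebraic subgroups of $G$ contained in $X$''; that is the Kawamata locus, and the identification of the two loci is precisely \autoref{thmx:main0}, which requires toric rank $0$, whereas here the toric rank is arbitrary. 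More seriously, you invoke \autoref{thm:Iitakagrouphom} to say that every nonconstant morphism from a connected commutative algebraic group into $G$ is a translate of a homomorphism. Iitaka's theorem applies only to morphisms \emph{between semi-abelian varieties}, and $G$ has unipotent rank $1$, so it is not semi-abelian: when the toric rank $t$ is positive, the morphism $\mG_m\to\mG_a\times\mG_m^t\subseteq G$ given by $z\mapsto(z+z^{-1},z,1,\dots,1)$ is nonconstant, is not a translate of a homomorphism, and the Zariski closure of its image is a curve in $\mG_a\times\mG_m$ that is not a translate of any algebraic subgroup. Consequently neither your step ``$\Exc'(X)=\emptyset\Rightarrow$ groupless'' nor your contrapositive step ``$\Exc'(X)\neq\emptyset\Rightarrow$ not Brody hyperbolic'' is established by the argument you give.

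Both gaps close trivially once you use the actual definition rather than the translated-subgroup description. By \autoref{lemma:LanglikeGmAV} and \autoref{lemma:testgroupless}.(1), $\Exc'(X)$ is the Zariski closure of the union of the images of \emph{all} nonconstant morphisms from $\mG_m$ or from abelian varieties to $X$, whether or not those images are translated subgroups; hence $\Exc'(X)=\emptyset$ if and only if there are no such morphisms, if and only if $X$ is groupless. Likewise, if $\Exc'(X)\neq\emptyset$ then analytifying a witnessing nonconstant algebraic morphism already violates \autoref{defn:KBrody}.(1), with no need to locate a copy of $\Ga$, $\Gm$, or an abelian variety inside a subgroup contained in $X$. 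With those substitutions your proof becomes correct and is essentially the paper's, the only cosmetic difference being that you quote \autoref{thmx:main2} where the paper quotes \autoref{thm:ZCalgebraicgroup} directly.
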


\subsection*{Related results}
As mentioned above, there are several results concerning the exceptional loci in abelian and semi-abelian $k$-varieties. 
Despite this, there is a dearth of results in the setting of a commutative algebraic $k$-group with unipotent rank 1 and toric rank 0, and to the authors' knowledge, there are no such results in the setting of a general commutative algebraic $k$-group.

In the case of unipotent rank 1 and toric rank 0,  work of Ghioca--Hu--Scanlon--Zannier \cite{Ghiocaetal:VariantML} proved that for a commutative algebraic $\overline{\mQ}$-group $G$ with unipotent rank 1 and toric rank 0 and $X$ a closed subvariety of $G$, if the Bombieri--Lang conjecture holds, then the intersection of $X$ with a finitely generated subgroup $\Gamma \subset G(\overline{\mQ})$ is a finite union of cosets of subgroups of $\Gamma$.  
They also prove an unconditional version of this result when $G$ is an extension of an elliptic curve by a copy of $\mG_a$. 
We also note that in \textit{loc.~cit.~}Remark 3.1, the authors state what they expect to be true for a Mordell--Lang type theorem for a general commutative algebraic $k$-group $G$, and interestingly, they expect that if the intersection of a closed subvariety $X$ of $G$ with a finite rank subgroup is Zariski dense in $X$, then $X$ should essentially be fibered by subgroups (\autoref{defn:notfibered}).  
Our \autoref{thmx:main1} strongly supports this expectation.  

\subsection*{Organization}
In Section \ref{sec:conventions}, we state our conventions and recall some basic background on algebraic groups and Berkovich analytic spaces. 
We discuss further background on algebraic groups and Berkovich analytic spaces in Sections \ref{sec:prelimsalgebraicKgroups} and \ref{sec:prelimsBerk}, respectively. 
In Section \ref{sec:hyperbolicity}, we recall various notions of hyperbolicity in the algebraic and non-Archimedean setting and also introduce the various exceptional loci in our work. 
The proofs of our results occupy the remaining two sections. 
In Section \ref{sec:proofThm0}, we prove our algebraic results, in particular \autoref{thmx:main0} and \autoref{thmx:main1}, and in Section \ref{sec:NAGGLV}, we prove our main non-Archimedean result, \autoref{thmx:main2}. 

\subsection*{Acknowledgments}
During preparation of this manuscript, the first author was partially supported by NSF DMS-2418796.  The second author thanks the Institute for Computational and Experimental Research in Mathematics (ICERM) for kind hospitality during Spring 2012, when some of the work leading to this paper took place.

\section{\bf Conventions}
\label{sec:conventions}
In this section, we establish our conventions and recall some background on algebraic groups and Berkovich analytic spaces. 
\subsection{Fields}
\label{subsec:fields}
We will use $k$ to denote an algebraically closed field of characteristic zero, and $K$ will denote an algebraically closed, complete, non-Archimedean non-trivially valued field of characteristic zero.

\subsection{Algebraic geometry}
By a $k$-variety, we mean a separated, integral, positive dimensional scheme of finite type over $\Spec(k)$. 
By a subvariety of a $k$-variety,  we mean a positive dimensional closed integral subscheme.  Note that a subvariety is determined by its underlying topological space and we frequently identify both.
By a point of a $k$-variety, we mean a $0$-dimensional closed integral subscheme. 

\subsection{Group varieties}
Algebraic $k$-groups will play an important role in our work, so we recall their definition. 
We say a $k$-scheme $G$ is an \cdef{algebraic $k$-group} if $G$ is a connected group scheme of finite type over $\Spec(k).$ 
Since $k$ is a field of characteristic zero,  \cite[\href{https://stacks.math.columbia.edu/tag/047N}{Tag 047N}]{stacks-project} implies that $G$ is smooth over $\Spec(k)$. 
Two important examples of algebraic $k$-groups in our work will be the \cdef{additive $k$-group} $\mG_{a,k} := \Spec k[t]$ and the \cdef{multiplicative $k$-group} $\mG_{m,k} := \Spec k[t,t^{-1}]$.  
Usually, we will omit the subscript $k$ from our notation as it will be clear from context. 

A complete algebraic $k$-group is called an abelian variety,  which we will write as an \cdef{abelian $k$-variety}.  
Recall that a homomorphism between abelian $k$-varieties $f\colon A \to A'$ of the same dimension is called an \cdef{isogeny} if one of the following equivalent conditions holds:~$f$ is surjective, $f$ is finite and flat, or $\ker(f)$ is finite on $k$-points.

We refer the reader to \cite{Serre:AlgebraicGroups} and \cite{Brion:StructureTheorems} for more background on algebraic $k$-groups.

\subsection{Non-Archimedean analytic spaces}
We will work with non-Archimedean analytic spaces in the sense of Berkovich \cite{BerkovichEtaleCohomology}. 
Throughout, we refer to a Berkovich space over $K$ as a \cdef{$K$-analytic space}. 
Usually,  we will only concern ourselves with good $K$-analytic spaces i.e.,  those such that every point admits an affinoid neighborhood. 
We will use script letters $\sX$, $\sY$, $\sZ$ to denote $K$-analytic spaces, which are not necessarily algebraic.

For a variety $X$, we will use $X^{\an}$ to denote the $K$-analytic space associated to $X$. 
By \cite[Theorem 3.2.1]{BerkovichSpectral} and \cite[Fact 4.3.1.1]{Temkin:IntroductionBerkovich}, the $K$-analytic space $X^{\an}$ is good (in fact boundaryless), path connected, locally compact, locally path connected, and Hausdorff. 
When $X$ is a smooth $K$-variety, an important result of Berkovich \cite{BerkovichUniversalCover} states that the $K$-analytic space $X^{\an}$ admits a topological universal cover, which we denote by $\wt{X}$. 

By their construction, $K$-analytic spaces come equipped with a natural analytic topology. 
In our work, we will need to consider two other topologies on $K$-analytic spaces.  
\subsubsection{The $G$-topology on $K$-analytic spaces}
The $G$-topology on a $K$-analytic space is defined in \cite[p.~25]{BerkovichEtaleCohomology}. 
The $G$-topology on a $K$-analytic space $\sX$ is a Grothendieck topology where the objects are $K$-analytic subdomains $\sY \subset \sX$ and a covering $\brk{\sY_i}$ of such a $\sY$ by $K$-analytic subdomains of $\sX$ is a set theoretic covering such that each $y\in \sY$ has a neighborhood in $\sY$ of the form $\sY_{i_1} \cup \cdots \cup \sY_{i_n}$ with $y\in \bigcap_{j} \sY_{i_j}$ i.e., the set theoretic covering has the quasi-net property. 
We write $\sX_G$ to denote $\sX$ endowed with the $G$-topology. 
Note that there is a unique way to define a sheaf $\sO_{\sX_G}$, which recovers the coordinate ring of all $K$-affinoid subdomains in the maximal $K$-affinoid atlas of $\sX$. 

The $G$-topology is the most natural topology for working with coherent sheaves and computing cohomology on $K$-analytic spaces. 
Another important utility of the $G$-topology is seen in the following proposition.

\begin{subprop}\label{prop:coherentBerk}
Let $\sX$ be a good $K$-analytic space.  
The structure sheaf $\sO_{\sX}$ is coherent. 
\end{subprop}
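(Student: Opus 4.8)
The plan is to reduce to the affinoid case and then to invoke the Noetherianity of $K$-affinoid algebras. Since coherence of a sheaf of $\sO_{\sX}$-modules is a local condition and $\sX$ is good, every point of $\sX$ has an affinoid neighborhood, so it suffices to treat the case $\sX = \cM(A)$ for a $K$-affinoid algebra $A$. There one must verify the two defining conditions of coherence: first that $\sO_{\sX}$ is locally of finite type, which is immediate because it is free of rank one; and second that for every open $U$ and every morphism $\sO_{\sX}^n|_U \to \sO_{\sX}|_U$ the kernel is locally of finite type. The first is formal, so the whole content lies in the second.

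To handle the kernel condition near a point $x$, I would restrict to an affinoid subdomain $V = \cM(A') \subseteq U$ containing $x$, over which the given morphism is determined by finitely many elements $f_1,\dots,f_n \in A'$. The kernel over $V$ is then the sheaf of relations among the $f_i$, and the crucial algebraic input is that $A'$ is Noetherian, so that the relation module $R = \{\,(g_1,\dots,g_n)\in (A')^n : \sum_i g_i f_i = 0\,\}$ is a finitely generated $A'$-module. It then remains to identify the kernel sheaf with the $\sO$-module associated to $R$, which is locally of finite type because $R$ is. This identification rests on the second essential input: the inclusion of an affinoid subdomain $W \hookrightarrow \cM(A')$, and more generally the stalk map $A' \to \sO_{\sX,x}$, is flat. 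Flatness ensures that forming the module of relations commutes with restriction to subdomains and with passage to stalks, so that the sheaf of relations is obtained from $R$ by an exact base-change functor; combined with Tate's acyclicity theorem, this shows that the associated-sheaf functor from finite $A'$-modules computes sections and stalks correctly and is exact.

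The main obstacle is that the two foundational facts on which I rely — Noetherianity of $K$-affinoid algebras and flatness of affinoid subdomain inclusions, together with Tate's acyclicity — are themselves deep results in the theory of Berkovich (equivalently rigid) analytic spaces; once they are granted, the finite generation of the syzygy module over the Noetherian ring $A'$ closes the argument with only formal sheaf-theoretic bookkeeping. A secondary point requiring care is the passage between the $G$-topology, where these coherence and flatness statements are most naturally phrased via $\sO_{\sX_G}$ and the $K$-affinoid atlas, and the ordinary analytic topology in which $\sO_{\sX}$ is defined: for a good space one checks that the relevant local sections and stalks agree, so that coherence on $\sX_G$ descends to coherence of $\sO_{\sX}$.
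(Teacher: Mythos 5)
Your argument is correct, but it takes a different route from the paper: where you reprove coherence from first principles, the paper disposes of the statement in two citations. The paper's proof invokes Ducros's Lemme 0.1 (in \emph{Les espaces de Berkovich sont excellents}) for the coherence of $\sO_{\sX_G}$ in the $G$-topology, and then Berkovich's Proposition 1.3.4(ii), which for good spaces gives an equivalence between coherent sheaves on $\sX$ and on $\sX_G$, to descend to the ordinary topology. Your proposal essentially unwinds the content of Ducros's lemma (which itself goes back to Kiehl): reduce to an affinoid $\cM(A')$, use Noetherianity of $K$-affinoid algebras to get finite generation of the syzygy module $R$, and use flatness of affinoid-subdomain inclusions and of the stalk maps $A' \to \sO_{\sX,x}$, together with Tate acyclicity, to identify the kernel sheaf with the sheaf associated to $R$. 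This is the standard argument and it is sound; the foundational inputs you flag (Noetherianity, flatness, acyclicity) are all established results in the Berkovich/rigid-analytic literature. The point you correctly identify as delicate --- passing between $\sO_{\sX_G}$ and $\sO_{\sX}$ on a good space --- is exactly the step the paper outsources to Berkovich's equivalence of categories, and it is worth being explicit there (as the paper is) rather than checking agreement of sections and stalks by hand, since the affinoid subdomains on which your relation modules live are admissible for the $G$-topology but are not open in the ordinary topology. In short: your approach buys self-containedness at the cost of reproving known foundational results; the paper's buys brevity at the cost of two external references.
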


\begin{proof}
By \cite[Lemme 0.1]{Ducros:BerkovichExcellent}, we have that structure sheaf $\sO_{\sX_G}$ is coherent in the $G$-topology. 
The result now follows from \cite[Proposition 1.3.4.(ii)]{BerkovichEtaleCohomology}, which asserts that the category of coherent sheaves on $\sX$ is equivalent to the category of coherent sheaves on $\sX_G$. 
\end{proof}

\subsubsection{The analytic Zariski topology on $K$-analytic spaces}
The analytic Zariski topology is defined in \cite[p.~28]{BerkovichEtaleCohomology} and in \cite[Section 4.2.2]{Temkin:IntroductionBerkovich}, but we briefly recall its construction. 
A closed analytic subspace $\sV$ of a good $K$-analytic space $\sX$ is called an \cdef{analytic subvariety} of $\sX$ if there exists an admissible affinoid covering $\underline{\sU} = \brk{\sU_i}$ of $\sX$ such that for every $\sU_i \in \underline{\sU}$, the intersection $\sV \cap \sU_i$ is Zariski closed in $\sV$ i.e., there exists a finite number of functions $f_1,\dots,f_r \in \sO_{\sX}(\sU_i)$ such that 
\[
\sV \cap \sU_i = \brk{x\in \sU_i : f_1(x) = \cdots = f_r(x) = 0}.
\]
The analytic Zariski topology on $\sX$ is the weakest topology such all the analytic subvarieties of $\sX$ are closed sets. 

For us, the main feature of the analytic Zariski topology is the following lemma of Cherry. 
Recall that a \cdef{$K$-analytic group} is a group object in the category of $K$-analytic spaces. 

\begin{sublemma}[\protect{\cite[Chapter VI, Corollary 1.3]{Cherry:Thesis}}]\label{lemma:analyticlosure}
Let $G$ be a $K$-analytic group, and let $H$ be an abstract subgroup of $G(K)$. 
The analytic Zariski closure of $H$ in $G$ is an analytic subgroup of $G$. 
\end{sublemma}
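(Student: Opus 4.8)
The plan is to show directly that the analytic Zariski closure $Z := \overline{H}$ is stable under the group operations of $G$ and thereby inherits the structure of an analytic subgroup. The essential input is that every analytic automorphism of $G$ is a homeomorphism for the analytic Zariski topology: an isomorphism of $K$-analytic spaces carries analytic subvarieties to analytic subvarieties, so for such an automorphism $\phi$ one has $\phi(\overline{S}) = \overline{\phi(S)}$ for any subset $S$. The translations $L_h, R_h \colon G \to G$ by a point $h \in G(K)$ and the inversion $\iota \colon G \to G$ are all such automorphisms, being built from the structure morphisms $m$ and $\iota$ of the $K$-analytic group $G$.

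First I would record the easy closure properties. The identity $e$ lies in $H \subseteq Z$. Because $\iota(H) = H$ and $\iota$ is an analytic automorphism, $\iota(Z) = \iota(\overline{H}) = \overline{\iota(H)} = \overline{H} = Z$, so $Z$ is stable under inversion. Next, for each $h \in H$ left translation gives $L_h(Z) = \overline{hH} = \overline{H} = Z$, whence $H \cdot Z \subseteq Z$. Fixing $z \in Z(K)$, right translation $R_z$ is an analytic automorphism (as $z$ is a $K$-point), and $H \cdot z \subseteq Z$ exhibits $H \subseteq R_z^{-1}(Z)$; since $R_z^{-1}(Z)$ is analytically Zariski closed, it contains $\overline{H} = Z$, that is, $Z \cdot z \subseteq Z$. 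Restricting to $K$-points and combining with stability under inversion shows that $Z(K)$ is an abstract subgroup of $G(K)$.

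To upgrade this to stability of the full analytic space $Z$ under multiplication, I would consider the analytic subvariety $m^{-1}(Z) \subseteq G \times G$. The previous step gives $m\big(Z(K)\times Z(K)\big) = Z(K)\cdot Z(K) \subseteq Z(K) \subseteq Z$, so $Z(K)\times Z(K) \subseteq m^{-1}(Z)$. The crux is then that $Z(K)\times Z(K) = (Z\times Z)(K)$ is analytically Zariski dense in $Z \times Z$; granting this, the closed set $m^{-1}(Z)$ must contain $Z\times Z$, giving $m(Z\times Z)\subseteq Z$. Together with the identity and inversion, the restrictions of $m$ and $\iota$ then make the closed analytic subvariety $Z$ into a $K$-analytic subgroup. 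The main obstacle is precisely this density statement: one must know that the $K$-rational points of a reduced good $K$-analytic space are dense for the analytic Zariski topology, and that the rational points of a product are the product of the rational points. Over our algebraically closed, nontrivially valued $K$ this follows from the density of type-$1$ (rigid) points in Berkovich spaces, but it is the one place where the non-Archimedean geometry, rather than formal group-theoretic manipulation, does the real work.
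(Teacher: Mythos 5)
The paper does not prove this lemma; it is quoted directly from Cherry's thesis, and the proof there is essentially the translation argument you give (adapted from the complex-analytic case in Lang's book). Your write-up is correct: the reduction of multiplicativity of $Z=\overline{H}$ to the statement $Z\cdot z\subseteq Z$ for $z\in Z(K)$ via the closed set $R_z^{-1}(Z)\supseteq \overline{H}$, and then the passage from $Z(K)\times Z(K)\subseteq m^{-1}(Z)$ to $Z\times Z\subseteq m^{-1}(Z)$, is exactly the standard mechanism, and you correctly isolate the one genuinely analytic input, namely that the $K$-points of the strictly $K$-analytic space $Z\times Z$ are dense for the analytic Zariski topology (one should invoke density of rigid points in $Z\times Z$ itself rather than a product-of-dense-sets argument, since the Berkovich product is not the topological product -- but you note this). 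This density holds because $K$ is nontrivially valued and algebraically closed and the groups in question are strict, which is guaranteed by the paper's standing conventions.
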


\section{\bf Preliminaries on algebraic $k$-groups}
\label{sec:prelimsalgebraicKgroups}
In this section, we discuss commutative algebraic groups and equivariant completions of commutative algebraic groups. 

\subsection{Commutative algebraic groups}
We recall background on the structure of commutative algebraic groups. 
First,  we have Chevalley's theorem concerning the decomposition of algebraic groups of perfect fields. 

\begin{subtheorem}[\protect{\cite{conradChev}}]\label{thm:Chevalley}
Let $G$ be an algebraic $k$-group. Then, $G$ has a unique closed normal subgroup $H$ such that $H$ is a linear $k$-group (i.e., a group subvariety of $\GL_n(k)$) and $G/H$ is an abelian $k$-variety. 
In other words, there exists a short exact sequence of algebraic $k$-groups
\[
0 \to H \to G \to A \to 0
\]
where $A\cong G/H$ is an abelian $k$-variety. 
\end{subtheorem}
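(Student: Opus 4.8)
The plan is to establish uniqueness first and then existence, using that $G$ is automatically smooth and connected in characteristic zero (as noted in the excerpt). For uniqueness I would invoke the basic fact that every homomorphism from a connected linear $k$-group to an abelian $k$-variety is trivial: a linear group is generated by images of one-parameter subgroups isomorphic to $\mathbb{G}_a$ or $\mathbb{G}_m$, and every morphism from an affine variety (in particular $\mathbb{G}_a$ or $\mathbb{G}_m$) to an abelian variety is constant. Granting this, if $H$ and $H'$ are two closed normal subgroups as in the statement, the composite $H \hookrightarrow G \twoheadrightarrow G/H'$ is a homomorphism from a linear group to an abelian variety, hence trivial; thus $H \subseteq H'$, and by symmetry $H = H'$.

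For existence, the goal is to produce a surjective homomorphism $\pi \colon G \to A$ onto an abelian variety whose kernel is linear, and then set $H := \ker\pi$. My strategy would proceed in three formal steps: (i) construct a normal projective equivariant compactification $X$ of $G$, i.e.\ a projective variety $X \supseteq G$ carrying a $G$-action that extends left translation and has $G$ as a dense open orbit (a Sumihiro-type equivariant-completion argument, followed by normalization); (ii) since $\characteristic k = 0$, resolve singularities so that $X$ may be taken smooth projective, and form its Albanese variety $A = \mathrm{Alb}(X)$, which is an abelian variety; (iii) normalize the Albanese map so that $e \mapsto 0$ and restrict it to $\alpha \colon G \to A$.

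Functoriality of the Albanese together with compatibility with the extended translation action shows that $\alpha$ intertwines translation on $G$ with translation on $A$, so by the rigidity lemma a pointed morphism of this kind is a group homomorphism. Replacing $A$ by the abelian subvariety generated by $\alpha(G)$, I may assume $\alpha$ surjective, and set $H := (\ker\alpha)^0$, a closed connected normal subgroup with $G/H$ an abelian variety (isogenous to $A$).

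The main obstacle is proving that $H$ is linear. The tempting characterization ``$H$ is affine if and only if every morphism $H \to$ abelian variety is constant'' is circular here, since its nontrivial direction is itself essentially Chevalley's theorem. Instead I would argue geometrically, in the spirit of Rosenlicht and Barsotti: the universal property of the Albanese forces $H = \ker\alpha$ to admit no nonconstant morphism to an abelian variety, and one must then show \emph{directly} that such an $H$ is affine --- for example by producing enough global functions on $H$ to separate points, analyzing the boundary $X \setminus G$ (which, after further blow-up, may be arranged to be a divisor) and the line bundle it defines to embed $H$ equivariantly into affine space. This direct affineness argument is the technical heart of the theorem and the step I expect to be hardest; the compactification and Albanese steps are comparatively formal. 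Since the statement is attributed to \cite{conradChev}, in practice I would defer the fine details of this last step to that reference.
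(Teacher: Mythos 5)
The paper offers no proof of \autoref{thm:Chevalley}: the statement is quoted as a black box from \cite{conradChev}, whose write-up follows Rosenlicht's argument --- one first shows that the smooth connected normal \emph{affine} subgroups of $G$ admit a unique maximal element $H$ (the subgroup generated by two such is a quotient of their product, hence again affine, normal, and connected), and the technical heart is the completeness criterion that a smooth connected group with no nontrivial such subgroup is complete, proved by letting $G$ act on finite-dimensional spaces of rational functions with poles along the boundary of a normal projective completion. Your Albanese route is therefore a genuinely different (and legitimate) strategy, and your uniqueness argument is essentially the standard one. One correction there: the blanket claim that every morphism from an affine variety to an abelian variety is constant is false --- the complement of an ample divisor in $A$ is affine and maps nonconstantly into $A$. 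What is true, and all you need, is that morphisms from $\mG_a$ or $\mG_m$ to $A$ are constant, since they would extend to $\mathbb{P}^1$ and abelian varieties contain no rational curves. More directly: the image of $H$ in $G/H'$ is a quotient of an affine group, hence affine, and a closed connected subgroup of an abelian variety, hence proper, so it is trivial.

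The existence half has a genuine gap, which you yourself flag: in the Albanese approach the affineness of $H=\ker\alpha$ \emph{is} the content of the theorem, and ``producing enough global functions on $H$ to separate points'' by analyzing the boundary divisor is a description of what must be done, not an argument. Until that step is supplied, you have only the formal reduction that $G$ surjects onto an abelian variety with connected kernel admitting no larger abelian quotient. Even the intermediate assertion that the universal property forces $H$ to admit no nonconstant homomorphism to an abelian variety needs an argument you have not given: one must use that $H$ is normal in $G$, that the kernel of $H\to\mathrm{Alb}(H)$ is characteristic in $H$ and hence normal in $G$, and that an extension of an abelian variety by an abelian variety is complete, so a nontrivial $\mathrm{Alb}(H)$ would yield an abelian quotient of $G$ of dimension exceeding $\dim\mathrm{Alb}(G)$. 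Since the step you defer to \cite{conradChev} is precisely the one that reference is built to prove (by a different method), the proposal should be regarded as an outline of an alternative proof rather than a proof.
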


We note that $H$ is the maximal linear subgroup of $G$.  Second, we have a theorem of Serre describing the (non-unique) structure of commutative linear algebraic groups over a perfect field.

\begin{subtheorem}[\protect{\cite[Proposition III.7.12 \& Corollary of Proposition VII.7.8]{Serre:AlgebraicGroups}}]\label{thm:commlinear}
A commutative linear algebraic $k$-group is isomorphic to a product 
\[
\mG_{a,k}^{\alpha} \times \mG_{m,k}^{\mu}.
\]
The isomorphism is not in general unique, but $\alpha,\mu \in \mN$ are unique. 
\end{subtheorem}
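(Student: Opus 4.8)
The plan is to decompose $G$ according to the Jordan decomposition into a semisimple and a unipotent part, and to identify each part with the appropriate model group. First I would fix a closed embedding $G \hookrightarrow \GL_{n,k}$, which exists because $G$ is linear. For each $g \in G(k)$ the Jordan decomposition in $\GL_n$ writes $g = g_s g_u = g_u g_s$ with $g_s$ semisimple and $g_u$ unipotent, and both $g_s,g_u$ lie in every closed subgroup containing $g$, so in particular they lie in $G$. Since $G$ is commutative, the assignments $g \mapsto g_s$ and $g \mapsto g_u$ are morphisms of varieties and group homomorphisms, so the sets $G_s$ and $G_u$ of semisimple and of unipotent elements are closed subgroups of $G$. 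The multiplication map $G_s \times G_u \to G$ is then a homomorphism with explicit inverse $g \mapsto (g_s,g_u)$ given by the algebraic Jordan decomposition, hence an isomorphism of algebraic $k$-groups. It therefore suffices to treat the two factors separately.

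For the semisimple factor, $G_s$ is a connected commutative algebraic group all of whose elements are semisimple, hence a connected group of multiplicative type; since $k$ is algebraically closed this is a torus, so $G_s \cong \mG_{m,k}^{\mu}$ with $\mu = \dim G_s$. This is the standard simultaneous diagonalizability statement for commuting semisimple operators.

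For the unipotent factor I would use that $k$ has characteristic zero. On $G_u \subset \GL_n$ the series $\log(g) = \sum_{i \ge 1}(-1)^{i+1}(g-1)^i/i$ terminates, because $g-1$ is nilpotent, and so defines a morphism $\log \colon G_u \to \mathrm{Lie}(G_u)$ onto the Lie algebra viewed as a vector group. Commutativity of $G_u$ forces the Baker--Campbell--Hausdorff correction terms to vanish, so $\log$ is a group homomorphism, and the (polynomial) exponential provides a two-sided inverse; hence $G_u \cong \mathrm{Lie}(G_u) \cong \mG_{a,k}^{\alpha}$ with $\alpha = \dim G_u$, the bracket being trivial since $G_u$ is abelian. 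Combining the two factors gives $G \cong \mG_{a,k}^{\alpha} \times \mG_{m,k}^{\mu}$.

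For uniqueness I would note that $\mu = \dim G_s$ and $\alpha = \dim G_u$ are intrinsic to $G$: $G_s$ is its subgroup of semisimple elements (its unique maximal subtorus) and $G_u$ its subgroup of unipotent elements (its unique maximal connected unipotent subgroup), so neither depends on the chosen isomorphism; equivalently $\mu$ is the rank of the character lattice $\Hom(G,\mG_{m})$ and $\alpha = \dim G - \mu$. The main obstacle is the identification of the unipotent part with a vector group, that is, checking that $\log$ is an isomorphism of algebraic groups rather than merely a bijection; this is precisely where characteristic zero is essential, since in positive characteristic the commutative unipotent groups are no longer all of the form $\mG_{a}^{\alpha}$.
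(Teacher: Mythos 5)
The paper does not prove this statement at all: it is quoted directly from Serre (Proposition III.7.12 and the Corollary of Proposition VII.7.8 of \emph{Groupes alg\'ebriques et corps de classes}), so there is no in-paper argument to compare against. Your proof is the standard self-contained argument underlying the cited result (Jordan decomposition of a commutative affine group into $G_s\times G_u$, then $G_s\cong\mG_{m,k}^{\mu}$ by simultaneous diagonalization and $G_u\cong\mG_{a,k}^{\alpha}$ via $\log/\exp$ in characteristic zero), and it is correct. Two places deserve slightly more care than your one-line assertions. First, the claim that $g\mapsto g_s$ and $g\mapsto g_u$ are \emph{morphisms of varieties} is the nontrivial content of the splitting: the usual argument first simultaneously diagonalizes the commuting semisimple family $G_s$, so that $G$ sits inside the upper triangular matrices with $G_s=G\cap D_n$; then $g_s$ is literally the diagonal part of $g$, which is visibly polynomial. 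Second, for the unipotent factor you should note why $\log(G_u)$ is all of $\mathrm{Lie}(G_u)$ rather than some subset of the nilpotent matrices: $\log$ is a homomorphism into the vector group $\mathfrak{gl}_n$, its image is a closed connected subgroup of a vector group, and in characteristic zero such a subgroup is a linear subspace of dimension $\dim G_u$. You also implicitly use that $G_s$ and $G_u$ are connected, which follows since they are images of the connected group $G$ under the morphisms $g\mapsto g_s$ and $g\mapsto g_u$ (connectedness is part of the paper's definition of an algebraic $k$-group and is genuinely needed, since the statement fails for finite groups). With these points made explicit, your argument is a complete proof of \autoref{thm:commlinear}.
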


As an immediate corollary to \autoref{thm:Chevalley} and \autoref{thm:commlinear}, we have the following. 

\begin{subcorollary}\label{corollary:structurecommutative}
Let $G$ be a commutative algebraic $k$-group. 
Then, there exist unique $\alpha,\mu \in \mN$ and a short exact sequence of algebraic $K$-groups 
\begin{equation}\label{eqn:commutativealgebraic}
0 \to \mG_{a,k}^{\alpha} \times \mG_{m,k}^{\mu} \to G \to A \to 0
\end{equation}
where $A \cong G/(\mG_{a,k}^{\alpha} \times \mG_{m,k}^{\mu})$  is an abelian $k$-variety. 
\end{subcorollary}

Using \autoref{corollary:structurecommutative}, we define the following quantities associated to a commutative algebraic $k$-group. 

\begin{subdefinition}\label{defn:unipotent}
Let $G$ be a commutative algebraic $k$-group. 
We say that $G$ has \cdef{unipotent rank $s$ and toric rank $t$} if the short exact sequence from \eqref{eqn:commutativealgebraic} is of the form
\[
0 \to \mG_{a,k}^{s} \times \mG_{m,k}^{t} \to G \to A \to 0
\]
for some $s,t\in \mN$. 
In some cases, we will not specify a unipotent or toric rank, which the reader should take to mean that such a rank is any non-negative integer. 
\end{subdefinition}

\begin{subexample}\label{exam:abeliansemiabelian}
When $G$ has unipotent rank and toric rank equal to 0, $G$ is isomorphic to $A$ and hence $G$ is an abelian $k$-variety. 
When $G$ has unipotent rank 0, $G$ is a semi-abelian variety i.e., an extension of an abelian variety by a (split) torus. 
Note that an abelian variety is necessarily a semi-abelian variety i.e., we do not assume non-zero toric rank in the above sentence. 
\end{subexample}

We recall a result concerning the representability of quotients by normal subgroups. 

\begin{subtheorem}\label{thm:quotientsrepn}
Let $G$ be an algebraic $k$-group, and let $H$ be a normal subgroup of $G$. 
Then, the quotient $G/H$ is a smooth $k$-scheme of finite type which has the unique structure of an algebraic $k$-group such that the map $q\colon G \to G/H$ is a homomorphism.
\end{subtheorem}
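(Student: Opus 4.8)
The plan is to realize $G/H$ as a quotient sheaf in the fppf topology, cite the representability theorem for such quotients as the one substantial input, and then obtain every claimed property (the group law, smoothness, finite type, uniqueness) by formal descent and the characteristic-zero hypothesis. Throughout I take $H$ to be a \emph{closed} normal subgroup scheme of $G$, which is what is needed for the quotient to be separated. First I would form the functor on $k$-schemes sending $T$ to the abstract quotient $G(T)/H(T)$ and let $Q$ denote its sheafification in the fppf topology; equivalently, $Q$ is the sheaf quotient of $G$ by the free action of $H$ by right translation. The key input is the general existence theorem for quotients of a group scheme of finite type over a field by a closed subgroup (for instance SGA~3, Exp.~VI$_{A}$, Th.~3.2, or the corresponding statement in Milne's \emph{Algebraic Groups}, Ch.~5): it asserts that $Q$ is representable by a separated $k$-scheme of finite type, which I denote $G/H$, and that the canonical morphism $q\colon G \to G/H$ is faithfully flat, with fibers the cosets of $H$. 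This representability is the one genuinely hard ingredient, and I would cite rather than reprove it.

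Next I would install the group law by descent. Because $q$ is faithfully flat and locally of finite presentation, it is an epimorphism of fppf sheaves, and so is the product $q \times q \colon G \times G \to G/H \times G/H$. Using normality of $H$, the composite $q \circ m \colon G \times G \to G/H$ of the multiplication $m$ with $q$ is invariant under right translations by $H \times H$ on the source, hence constant on the fibers of $q \times q$; descent of morphisms along the fppf cover $q \times q$ then produces a unique $\bar m \colon G/H \times G/H \to G/H$ with $\bar m \circ (q \times q) = q \circ m$. The same argument applied to inversion and to the identity section supplies the remaining structure maps, and each group axiom for $(G/H, \bar m)$ can be verified after pulling back along the appropriate fppf cover assembled from copies of $q$, where it reduces to the corresponding axiom in $G$. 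Thus $G/H$ is a $k$-group scheme and $q$ is a homomorphism by construction.

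Finally I would read off the stated conclusions. Finite type and separatedness come directly from the representability theorem; since $G$ is connected and $q$ is surjective, $G/H$ is connected, so it is an algebraic $k$-group in the sense of this paper. As $G/H$ is a group scheme of finite type over a field of characteristic zero, it is automatically smooth by the same result \cite[\href{https://stacks.math.columbia.edu/tag/047N}{Tag 047N}]{stacks-project} used above to see that $G$ is smooth, which gives the smoothness claim. For uniqueness of the group structure: any multiplication on $G/H$ making $q$ a homomorphism must satisfy the identity $(\text{mult}) \circ (q \times q) = q \circ m$, and since $q \times q$ is an fppf epimorphism this forces the multiplication, and then the identity and inverse, to agree with the maps built above. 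As indicated, the only nonformal step is the representability of the fppf quotient; the passage to the group structure and the remaining properties are descent-theoretic consequences together with the input that in characteristic zero every finite-type group scheme over $k$ is smooth.
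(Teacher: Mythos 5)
Your proposal is correct and rests on the same key input as the paper: the paper's entire proof is the citation of SGA~3, Exp.~VI$_A$, 3.2, which is exactly the representability theorem you identify as the one nonformal step (and which in fact already packages the group structure and faithful flatness of $q$). The descent construction of $\bar m$, the uniqueness argument via $q\times q$ being an fppf epimorphism, and the appeal to characteristic zero for smoothness are correct elaborations of details the paper leaves implicit.
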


\begin{proof}
This is \cite[VIA.3.2]{SGA3}. 
\end{proof}

To conclude,  we prove a result which states that commutative algebraic $k$-group with unipotent rank 1 are isomorphic to certain products over $A$ of simpler commutative algebraic $k$-groups. 

\begin{sublemma}\label{lemma:productoverabelian}
Let $G$ be a commutative algebraic $k$-group with unipotent rank 1 and toric rank $t$ so there exists a short exact sequence of the form
\[
0 \to \mG_a \times \mG_m^t \to G \to A \to 0
\]
where $A$ is an abelian $k$-variety.  
Then, $G$ is isomorphic to a product over $A$ of $t + 1$ commutative algebraic groups which have either unipotent rank 1 and toric rank 0 or unipotent rank 0 and toric rank 1. 
\end{sublemma}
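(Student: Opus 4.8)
The plan is to realize $G$ as an extension of $A$ by its linear part and to split off one factor of that linear part at a time by passing to quotients. By \autoref{corollary:structurecommutative} and \autoref{thm:commlinear} the hypothesis gives a short exact sequence $0 \to H \to G \to A \to 0$ with $H = \mG_a \times \mG_m^t$; write $H = H_0 \times H_1 \times \cdots \times H_t$, where $H_0 = \mG_a$ and $H_i = \mG_m$ for $1 \le i \le t$. For each index $j$ set $H^{(j)} = \prod_{l \ne j} H_l$, regarded as a closed subgroup of $H$ and hence of $G$. Since $G$ is commutative, $H^{(j)}$ is normal, so by \autoref{thm:quotientsrepn} the quotient $G_j := G/H^{(j)}$ is a smooth connected commutative algebraic $k$-group, and the original sequence descends to
\[
0 \to H/H^{(j)} \to G_j \to A \to 0.
\]
Because $H/H^{(j)} \cong H_j$, the group $G_j$ has unipotent rank $1$ and toric rank $0$ when $j = 0$, and unipotent rank $0$ and toric rank $1$ when $j \ge 1$; this produces the desired $t+1$ groups.

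Next I would assemble them. The quotient homomorphisms $G \to G_j$ are all compatible with the structure maps $G_j \to A$ and $G \to A$, hence induce a homomorphism
\[
\phi \colon G \longrightarrow P := G_0 \times_A G_1 \times_A \cdots \times_A G_t
\]
into the fiber product over $A$, which is again a commutative algebraic $k$-group. One checks that $P \to A$ is surjective with kernel $\prod_j H_j = H$, so that $P$ sits in an extension $0 \to H \to P \to A \to 0$ (and is in particular connected). Now $\phi$ is the identity on the base $A$, while on the kernel it is the map $h \mapsto (p_0(h), \dots, p_t(h))$, where $p_j \colon H \to H_j$ is the projection; under the identification $\ker(P \to A) = \prod_j H_j = H$ this is the identity. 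A five-lemma argument in the abelian category of commutative algebraic $k$-groups then forces $\phi$ to be an isomorphism, giving $G \cong P$ as required.

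An alternative and conceptually cleaner route argues entirely with extension classes. In the abelian category of commutative algebraic $k$-groups, equivalence classes of extensions of $A$ by $H$ are classified by $\Ext^1(A, H)$, and additivity in the second variable yields
\[
\Ext^1(A, H) \cong \Ext^1(A, \mG_a) \times \prod_{i=1}^t \Ext^1(A, \mG_m).
\]
The class of $G$ then corresponds to the tuple of pushouts $(p_j)_\ast[G] = [G_j]$, and the fiber product $P$ is exactly the extension realizing that same tuple; hence $[G] = [P]$ and $G \cong P$.

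I expect the main obstacle to be the bookkeeping around the fiber product $P$: verifying that it is a commutative algebraic $k$-group fitting into $0 \to H \to P \to A \to 0$ (namely its connectedness and the computation of its kernel as $\prod_j H_j$), and confirming that $\phi$ restricts to an isomorphism on kernels so that the five-lemma genuinely applies. Everything else—existence of the quotients $G/H^{(j)}$ and the identification $H/H^{(j)} \cong H_j$—is immediate from the structure results already recorded.
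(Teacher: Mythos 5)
Your proof is correct, but it takes a genuinely different route from the paper's. You treat all $t+1$ factors of the linear part $H=\mG_a\times\mG_m^t$ symmetrically: you form the quotients $G_j=G/H^{(j)}$ and check that the canonical map $\phi\colon G\to G_0\times_A\cdots\times_A G_t$ is an isomorphism because it is the identity on the base $A$ and (after the identification $H/H^{(j)}\cong H_j$) the canonical isomorphism $H\cong\prod_j H_j$ on kernels; equivalently, you invoke additivity of $\Ext(A,-)$ in the second variable, which is exactly \cite[Chapter VII, Section 1, Proposition 1]{Serre:AlgebraicGroups} as quoted in Section \ref{sec:prelimsBerk}. The paper instead treats the unipotent factor asymmetrically: it first decomposes the semi-abelian quotient $G'=G/\mG_a$ as a fiber product over $A$ of $t$ toric-rank-one pieces via \cite[Lemma 2.2]{Vojta:IntegralPoints1}, and then argues by induction on $t$, using the long exact sequence in $\Ext(\cdot,\mG_a)$ together with the vanishing of $\Hom(\mG_m,\mG_a)$ and $\Ext(\mG_m,\mG_a)$, that pullback gives a bijection $\Ext(A,\mG_a)\cong\Ext(G',\mG_a)$, so the $\mG_a$-extension $G\to G'$ descends to $A$. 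Your argument is shorter, needs no induction, and avoids Vojta's lemma and the $\Ext(\mG_m,\mG_a)=0$ computation; the paper's argument proves along the way the genuinely stronger fact that \emph{every} $\mG_a$-extension of the semi-abelian variety $G'$ is pulled back from $A$ (a statement about extensions of $G'$, not of $A$, which your method does not yield). Two small points to tighten: apply the five lemma in the abelian category of all commutative group schemes of finite type over $k$ (the connected ones do not form an abelian category), or simply note that $\phi$ is a bijective homomorphism of smooth connected groups in characteristic zero and hence an isomorphism; and the connectedness of the fiber product, which you flag as bookkeeping, indeed follows either from $\phi$ being an isomorphism or from the general fact that an extension of connected groups is connected.
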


\begin{proof}
First, note that $G/\mG_a$ is a semi-abelian variety, see \autoref{exam:abeliansemiabelian}. 
In this setting, \cite[Lemma 2.2]{Vojta:IntegralPoints1} shows that $G' = G/\mG_a$ is the product over $A$ of $t$ commutative algebraic groups of unipotent rank 0 and toric rank 1. 

The remainder of the lemma is proved by induction on $t$. 
The case of $t = 0$ is clear.
For the inductive set, we apply the contravariant exact sequence in $\Ext(\cdot,\mG_a)$ to the short exact sequence
\[
0\to \mG_m \to G'' \to G''' \to 0
\]
where $G''$ and $G'''$ are semi-abelian varieties.  
Note that this sequence is stricly exact in the sense of \cite{Serre:AlgebraicGroups}, and hence this yields the sequence
\[
\Hom(\mG_m,\mG_a) \to \Ext(G''',\mG_a) \to \Ext(G'',\mG_a) \to \Ext(\mG_m,\Ga).
\]
By \cite[Proof of Proposition VII.6.7]{Serre:AlgebraicGroups}, we have that 
$\Hom(\mG_m,\mG_a) = 0$ and $\Ext(\mG_m,\Ga) = 0$, and therefore, $ \Ext(G''',\mG_a) \cong \Ext(G'',\mG_a) $. 
Now the induction hypothesis tells us that $\Ext(A,\mG_a) $ is in bijection with $\Ext(G',\mG_a)$, and therefore the result follows. 
\end{proof}

\subsection{Equivariant completion of commutative algebraic groups}
\label{subsec:equivariantcompletion}
In this section, we recall a construction of Serre \cite{Serre:Appendix} concerning an equivariant completion of a commutative algebraic group. 
First, we recall this notion. 

\begin{subdefinition}
Let $G$ be an algebraic $k$-group
An \cdef{equivariant completion} of $G$ is a complete $k$-scheme $X$ equipped with an action of $G$ such that there is an open equivariant immersion $G\hookrightarrow X$ with schematically dense image. 
\end{subdefinition}

Let $G$ be a commutative algebraic $k$-group, so $G$ fits into the short exact sequence of algebraic $k$-groups
\[
0\to H \to G \to A \to 0
\]
where $H \cong \mG_a^\mu \times \mG_m^t$ for some $\mu,t \in \mN$ and $A$ is an abelian $k$-variety.
In \cite[Section 2]{Serre:Appendix},  Serre constructed an equivariant completion of $G$ by using that for given a projective $k$-variety $P$ with a $H $-action, every open $H$-equivariant immersion $H \to P$ induces in a natural way a completion $\overline{G}$ of $G$, namely the fiber bundle $\overline{G} = G \times^H P$ with fiber $P$ over $A$, associated to the $H$-principal bundle $G\to A$ with respect to the given $H$-action of $P$. 

In the setting where $G$ has unipotent rank 1 and toric rank 0, we may take the above $P$ to be $\mP^1_k$, and hence $\overline{G}$ is isomorphic to $\mP^1 \times U$ locally over $A$. 
Moreover, $\overline{G}$ is a projective bundle over $A$, and hence there exists a rank 2 vector sheaf $\mathcal{E}$ on $A$ for which $\overline{G} = \mP(\mathcal{E})$. 

To conclude our discussion on these completions, we record a fact about the conormal sheaf of the divisor at infinity of $\overline{G}$ for future use.

\begin{sublemma}\label{lemma:conormaltrivial}
Let $G$ be a commutative algebraic $k$ group with unipotent rank 1 and toric rank 0, let $\overline{G}$ be the equivariant completion of $G$ as defined in \cite[Section 2]{Serre:Appendix}, and let $D = \overline{G}\setminus G$ be the divisor at infinity of $\overline{G}$. 
The conormal sheaf $\mathcal{C}_{D/\overline{G}}$ is trivial.
\end{sublemma}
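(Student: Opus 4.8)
The plan is to exploit the description of $\overline{G}$ as the associated fiber bundle $G\times^{\mG_a}\mP^1$ recalled above, and to realize the conormal sheaf $\mathcal{C}_{D/\overline{G}}$ as a line bundle associated to a one-dimensional representation of $\mG_a$. Since $\mG_a$ admits no nontrivial characters, that representation is trivial, and hence so is the conormal sheaf.

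First I would fix the geometry of the fiber. Here $\mG_a$ acts on $\mP^1$ through the standard embedding $\mG_a=\mathbb{A}^1\hookrightarrow\mP^1$ as the complement of the fixed point $\infty=[1:0]$, acting by translations on $\mathbb{A}^1$ and fixing $\infty$. The open immersion $G\hookrightarrow\overline{G}$ is induced by $\mathbb{A}^1\hookrightarrow\mP^1$, so the divisor at infinity is $D=\overline{G}\setminus G=G\times^{\mG_a}\{\infty\}$. As $\infty$ is a fixed point, $D$ is a section of the bundle $\overline{G}\to A$, and is isomorphic to $A$; moreover, under the induced $G$-action on the equivariant completion $\overline{G}$, we have $D\cong A=G/\mG_a$ as a homogeneous space, with $\mG_a$ the isotropy group at the base point.

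Next I would identify the conormal sheaf equivariantly. Because $\overline{G}$ is a $G$-equivariant completion, $G$ acts on $\overline{G}$ preserving $D$, hence acts on $\mathcal{C}_{D/\overline{G}}$, making it a $G$-equivariant line bundle on $D\cong G/\mG_a$. Such a bundle is determined by its isotropy character, namely the one-dimensional representation of the stabilizer $\mG_a$ on the conormal line to $D$ at the base point. Concretely this fiber is the cotangent line $(T_\infty\mP^1)^{\vee}$, on which $\mG_a$ acts through the linearization of its action on $\mP^1$ at $\infty$; equivalently, in the associated-bundle picture the normal bundle is $N_{D/\overline{G}}=G\times^{\mG_a}T_\infty\mP^1$, so everything reduces to this isotropy representation. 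Finally I would compute it: a one-dimensional representation of $\mG_a$ is a homomorphism $\mG_a\to\mG_m$, and $\Hom(\mG_a,\mG_m)=0$, so $\mG_a$ acts trivially on $T_\infty\mP^1$ and on its dual. (A direct check confirms this: in the coordinate $w=y/x$ near $\infty$, the action $t\cdot[x:y]=[x+ty:y]$ becomes $w\mapsto w/(1+tw)$, whose derivative at $w=0$ equals $1$.) Hence the isotropy character is trivial, the associated line bundle $\mathcal{C}_{D/\overline{G}}$ is trivial, and the lemma follows.

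The main point to get right is the equivariant bookkeeping, that $\mathcal{C}_{D/\overline{G}}$ genuinely carries a $G$-equivariant structure and that its isomorphism class is captured by the isotropy character of $\mG_a$; once this is in place, the vanishing $\Hom(\mG_a,\mG_m)=0$ does all the work, so the computation itself is immediate. An alternative, purely local route avoids the equivariant language entirely: one computes the transition cocycle of the $\mP^1$-bundle $\overline{G}\to A$, which takes values in the translations of $\mathbb{A}^1$ fixing $\infty$, and observes that such translations act trivially on the conormal line at $\infty$, so the transition functions of $\mathcal{C}_{D/\overline{G}}$ are trivial as well.
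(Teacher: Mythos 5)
Your proposal is correct, and in fact your closing ``alternative, purely local route'' is precisely the paper's own proof: the paper argues in one line that the gluing of the charts $\mP^1\times U$ over $A$ involves only translations of $\mathbb{A}^1$, which fix $\infty$ and act trivially on the tangent (hence conormal) line there, so the transition functions of $\mathcal{C}_{D/\overline{G}}$ are identically $1$. Your primary argument is a genuinely different, more structural take on the same phenomenon: you upgrade $\mathcal{C}_{D/\overline{G}}$ to a $G$-equivariant line bundle on $D\cong G/\mG_a$, reduce to the isotropy representation of $\mG_a$ on the conormal line $(T_\infty\mP^1)^\vee$, and invoke $\Hom(\mG_a,\mG_m)=0$; the explicit check that $w\mapsto w/(1+tw)$ has derivative $1$ at $w=0$ correctly confirms this. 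The equivariant route buys a slightly stronger conclusion (equivariant triviality, and an argument that visibly depends only on the unipotence of the kernel rather than on a choice of trivializing cover), at the cost of some bookkeeping to justify that an equivariant line bundle on a homogeneous space is classified by its isotropy character; the paper's cocycle computation is more elementary and is all that is needed for the intersection-theoretic use ($G_\infty^2=0$) made of the lemma later. Both arguments are sound.
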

\begin{proof}
This follows from Serre's construction of $\overline{G}$. 
Indeed, the gluing of open subsets $\mP^1 \times U$ over $A$ involves only translations, which does not affect the tangent vectors at infinity.  
In particular, this implies that $\mathcal{C}_{D/\overline{G}}$ is trivial. 
\end{proof}

\section{\bf Preliminaries on $K$-analytic spaces}
\label{sec:prelimsBerk}
In this section, we discuss the notion of Stein spaces in the context of Berkovich's $K$-analytic spaces and also prove several results concerning torsors over $K$-analytic spaces. 

\subsection{$K$-analytic Stein spaces}
We briefly recall a notion of a Stein space in Berkovich's theory, and we refer the reader to \cite{MaculanPoineau:Stein} for a detailed discussion. 

\begin{subdefinition}[\protect{\cite[Definition 2.3]{Kiehl:TheoremAandB}}]
A $K$-analytic space $\sX$ is said to be \cdef{quasi-Stein\footnote{In \cite{MaculanPoineau:Stein}, the authors rename this notion as being $W$-exhausted by affinoids. Since we will not discuss other notions of being Stein, we will adopt Kiehl's convention and refer to this as quasi-Stein.}} if it admits an affinoid cover for the $G$-topology $\brk{D_i}_{i\in \mN}$ such that for $i\geq 0$, $D_i$ is contained in $D_{i+1}$ and the restriction map
\[
\sO_{\sX}(D_{i+1}) \to \sO_{\sX}(D_i)
\]
has dense image. 
\end{subdefinition}

An important class of quasi-Stein $K$-analytic are analytifications of affine algebraic $K$-groups. 

\begin{subtheorem}\label{thm:affineStein}
The analytification of an algebraic $K$-group is quasi-Stein if and only if it is affine. 
\end{subtheorem}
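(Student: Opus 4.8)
The plan is to prove the two implications separately, treating the "if" direction as essentially standard and concentrating on the "only if" direction, where the group structure, via Chevalley's theorem, is what makes the statement true.

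For the "if" direction, suppose $G$ is affine and fix a closed immersion $G\hookrightarrow\mA^n$. The space $(\mA^n)^{\an}$ is the increasing union of the closed polydiscs $E_m=\{x:|x_i|\le|c_m|\text{ for all }i\}$, where $c_m\in K^\times$ is chosen with $|c_m|\to\infty$ (possible since the valuation is nontrivial); each $E_m$ is affinoid and $E_m\subset E_{m+1}$. Setting $D_m=E_m\cap G^{\an}$ yields an affinoid cover for the $G$-topology with $D_m\subset D_{m+1}$. The restriction maps $\OO_{G^{\an}}(D_{m+1})\to\OO_{G^{\an}}(D_m)$ have dense image: the coordinate ring $\OO(G)$ is dense in each affinoid algebra $\OO_{G^{\an}}(D_m)$, and $\OO(G)\subset\OO_{G^{\an}}(D_{m+1})$ maps onto a dense subset of $\OO_{G^{\an}}(D_m)$. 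Hence $G^{\an}$ is quasi-Stein.

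For the "only if" direction I argue by contraposition. Suppose $G$ is not affine. By \autoref{thm:Chevalley} there is a short exact sequence $0\to H\to G\xrightarrow{\pi}A\to0$ with $H$ linear and $A$ an abelian $K$-variety, and $G$ is affine precisely when $A$ is trivial; thus $g:=\dim A\ge1$. Were $G^{\an}$ quasi-Stein, Kiehl's Theorem B \cite{Kiehl:TheoremAandB} would give $H^q(G^{\an},\mathcal F)=0$ for every coherent sheaf $\mathcal F$ and $q\ge1$, in particular $H^1(G^{\an},\OO_{G^{\an}})=0$. I will contradict this by showing $H^1(G^{\an},\OO_{G^{\an}})\neq0$. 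The morphism $\pi$ is a torsor under the affine group $H$, hence an affine morphism, and its analytification $\pi^{\an}\colon G^{\an}\to A^{\an}$ has quasi-Stein preimages of affinoids; by Tate acyclicity (equivalently Kiehl's theorem applied fibrewise) $R^q\pi^{\an}_*\OO_{G^{\an}}=0$ for $q\ge1$. The Leray spectral sequence then collapses to $H^p(A^{\an},\pi^{\an}_*\OO_{G^{\an}})\cong H^p(G^{\an},\OO_{G^{\an}})$, so it suffices to prove $H^1(A^{\an},\pi^{\an}_*\OO_{G^{\an}})\neq0$. Since $A$ is proper, GAGA identifies $H^1(A^{\an},\OO_{A^{\an}})$ with $H^1(A,\OO_A)\cong\mathrm{Lie}(\mathrm{Pic}^0_A)$, of dimension $g\ge1$, so this is nonzero; the remaining point is to see that $\OO_{A^{\an}}$ contributes nontrivially inside $\pi^{\an}_*\OO_{G^{\an}}$. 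Because the torsor is Zariski-locally trivial, $\pi_*\OO_G$ decomposes along the toric directions of $H$ by characters, with weight-zero part $\OO_A$ a canonical $\OO_A$-linear direct summand, and along the additive ($\Ga$) directions it carries a filtration whose graded pieces are trivial line bundles (using \autoref{lemma:conormaltrivial}) and whose bottom step is $\OO_A$. Tracking $H^1(A,\OO_A)$ through this structure exhibits a nonzero class in $H^1(A,\pi_*\OO_G)$, hence $H^1(G^{\an},\OO_{G^{\an}})\neq0$.

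The main obstacle is the final step, controlling the non-coherent sheaf $\pi_*\OO_{G^{\an}}$. For the toric part the character grading gives a clean $\OO_A$-linear retraction onto $\OO_A$, so that $H^1(A,\OO_A)$ is literally a direct summand of $H^1(A,\pi_*\OO_G)$. For the unipotent part the extension class of the torsor can obstruct a naive splitting, so one cannot simply project; instead one must run the long exact sequences of the filtration and verify that the relevant class in $H^1(A,\OO_A)$ is not annihilated, i.e. establish $H^1(A^{\an},\pi^{\an}_*\OO_{G^{\an}})\neq0$ directly. Two further technical points require care in the Berkovich setting: justifying the vanishing $R^q\pi^{\an}_*\OO_{G^{\an}}=0$ and the collapse of Leray, and transferring the algebraic pushforward computation to the analytic side compatibly with GAGA over the proper base $A$.
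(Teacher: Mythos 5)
Your ``if'' direction is fine, but the ``only if'' direction has a genuine gap, and it occurs exactly in the main case of interest for this paper. You propose to contradict Theorem B by exhibiting a nonzero class in $H^1(G^{\an},\OO_{G^{\an}})$ coming from $H^1(A,\OO_A)$ through the filtration on $\pi_*\OO_G$. For the unipotent directions this cannot work: if $\pi\colon G\to A$ is the $\Ga$-torsor with class $\xi\in H^1(A,\OO_A)$, the first step of the filtration is the extension $0\to\OO_A\to F_1\to\OO_A\to0$ with class $\pm\xi$, so the connecting map $H^0(A,\OO_A)\to H^1(A,\OO_A)$ sends $1\mapsto\pm\xi$ and the map $H^1(A,F_0)\to H^1(A,F_1)$ annihilates precisely the line spanned by $\xi$. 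When $A$ is an elliptic curve and $\xi\neq0$ this kills all of $H^1(A,\OO_A)$, and in fact the nontrivial vector extension of an elliptic curve has $H^i(G,\OO_G)=0$ for $i>0$; its complex analytification is Serre's classical example of a Stein variety that is not affine. So there is no contradiction with Theorem B to be had: the obstruction to quasi-Steinness in the non-Archimedean setting is not a nonvanishing $H^1$ but the failure of Theorem A --- the paucity of global analytic functions on $G^{\an}$ --- and establishing that is the hard content of Maculan's maximum-modulus-principle argument. (Your step ``verify that the relevant class in $H^1(A,\OO_A)$ is not annihilated'' is exactly where the argument breaks; the class \emph{is} annihilated.) Note also that extensions of elliptic curves by $\Ga$ are the central objects of this paper, so this is not a degenerate corner case.

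For comparison, the paper does not attempt a direct proof at all: it observes that $G^{\an}$ is separated, boundaryless, and countable at infinity, and then invokes the Main Theorem and Theorem A.3 of \cite{Maculan:Stein} together with Theorems 1.11 and 1.12 of \cite{MaculanPoineau:Stein}, which together say that for such spaces the analytification of an algebraic group is (quasi-)Stein if and only if the group is affine. Your toric analysis (the weight-zero retraction $\OO_{A^{\an}}\to\pi^{\an}_*\OO_{G^{\an}}\to\OO_{A^{\an}}$ combined with the low-degree Leray sequence) is sound and does show non-quasi-Steinness for semi-abelian $G$ with $\dim A\geq1$, but a complete proof of the stated theorem must handle the unipotent part by a different mechanism than cohomology vanishing.
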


\begin{proof}
Note that the analytification of an affine algebraic $k$-group is a separated, boundaryless, and countable at infinity $K$-analytic space. 
The result now follows from \cite[Main Theorem \& Theorem A.3]{Maculan:Stein} and \cite[Theorems 1.11 \& 1.12]{MaculanPoineau:Stein}. 
\end{proof}

An important property of quasi-Stein $K$-analytic spaces is that the higher cohomology of every coherent sheaf vanishes, which is a non-Archimedean analogue of Cartan's Theorem B. 

\begin{subtheorem}[\protect{\cite[Theorem 1.11]{MaculanPoineau:Stein}}]\label{thm:Steinvanishing}
Let $\sX$ be a separated, countable at infinity, quasi-Stein $K$-analytic space, and let $\sF$ be a coherent sheaf on $\sX$. 
Then, $H^q(\sX,\sF) = 0$ for all $q\geq 1$. 
\end{subtheorem}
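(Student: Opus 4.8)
The plan is to run the classical exhaustion proof of Cartan's Theorem~B in the non-Archimedean setting, with the density clause in the definition of quasi-Stein playing exactly the role needed to kill the obstruction term $\varprojlim^1$. By hypothesis $\sX$ admits a nested affinoid exhaustion $D_0 \subset D_1 \subset \cdots$ for the $G$-topology with $\sX = \bigcup_i D_i$ and with each restriction $\sO_{\sX}(D_{i+1}) \to \sO_{\sX}(D_i)$ of dense image. The first step is to reduce the computation of $H^q(\sX,\sF)$ to the tower $\brk{H^q(D_i,\sF)}_i$. Since each $D_i$ is affinoid and $\sF$ is coherent, the non-Archimedean Theorem~B for affinoids (Tate's acyclicity theorem, extended to coherent modules by Kiehl) gives $H^q(D_i,\sF)=0$ for all $q\geq 1$; in particular the exhausting affinoids are $\sF$-acyclic, so a Leray-type comparison identifies the cohomology of $\sX$ with the limit cohomology of the exhaustion.

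Second, I would invoke the Milnor exact sequence attached to the increasing exhaustion,
\[
0 \to {\varprojlim}^1 H^{q-1}(D_i,\sF) \to H^q(\sX,\sF) \to \varprojlim_i H^q(D_i,\sF) \to 0,
\]
the standard device relating the cohomology of a union to the inverse limit of the cohomologies of its pieces. For $q\geq 2$ both outer terms vanish at once: the right-hand term because every $H^q(D_i,\sF)=0$, and the left-hand term because every $H^{q-1}(D_i,\sF)=0$ (as $q-1\geq 1$). This settles all degrees $q\geq 2$ with no further input.

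The degree $q=1$ case is where the quasi-Stein hypothesis does genuine work. There the sequence collapses to ${\varprojlim}^1 \sF(D_i)\cong H^1(\sX,\sF)$, so it suffices to show ${\varprojlim}^1 \sF(D_i)=0$ for the tower of global sections under restriction. The plan is to upgrade the density of $\sO_{\sX}(D_{i+1}) \to \sO_{\sX}(D_i)$ to density of $\sF(D_{i+1}) \to \sF(D_i)$: choose a finite presentation of $\sF$ over $D_i$, note that restriction is compatible with presentations, and push dense image through the cokernel. One then applies the topological Mittag--Leffler theorem: an inverse system of Banach (more generally Fréchet) spaces whose transition maps have dense image has vanishing ${\varprojlim}^1$. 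Combining with the above yields $H^1(\sX,\sF)=0$, completing the proof.

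The main obstacle I anticipate is precisely this $q=1$ step, and it rests on two functional-analytic inputs. First, propagating density from $\sO_{\sX}$ to an arbitrary coherent $\sF$ requires checking that dense image survives when one takes cokernels of maps of Banach modules with dense transition maps. Second, the vanishing of ${\varprojlim}^1$ for Banach towers with dense transition maps is a real analytic statement (it fails without completeness), so one must verify that the $\sF(D_i)$ carry compatible Banach topologies making restriction continuous with dense image, so that the classical Mittag--Leffler lemma for Fréchet spaces applies. Everything in degrees $q\geq 2$ is formal once affinoid acyclicity and the Milnor sequence are in hand.
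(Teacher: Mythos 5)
This statement is not proved in the paper at all: it is quoted verbatim from \cite[Theorem 1.11]{MaculanPoineau:Stein}, so there is no internal proof to compare against. Your argument is, in outline, exactly the classical Kiehl-style exhaustion proof that underlies the cited result (going back to \cite{Kiehl:TheoremAandB}), and it is essentially correct. Two of your steps deserve to be made precise. First, the ``Leray-type comparison'' plus Milnor sequence can be obtained in one stroke by Cartan--Leray in the $G$-topology: since the $D_i$ form a nested admissible affinoid cover, every finite intersection $D_{i_1}\cap\cdots\cap D_{i_n}$ equals $D_{\min(i_j)}$ and is hence $\sF$-acyclic by Tate--Kiehl, so the \v{C}ech cohomology of the chain $\brk{D_i}$ computes $H^q(\sX,\sF)$; for a totally ordered cover this \v{C}ech complex has $\check{H}^0=\varprojlim \sF(D_i)$, $\check{H}^1={\varprojlim}^1\sF(D_i)$, and nothing in higher degrees, which gives your $q\geq 2$ vanishing and reduces $q=1$ to ${\varprojlim}^1$ exactly as you say (note that admissibility, i.e.\ the quasi-net property of the cover, is what the phrase ``affinoid cover for the $G$-topology'' in the definition packages, and it is genuinely needed here). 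Second, the density upgrade from $\sO_{\sX}$ to $\sF$ is cleaner via Kiehl's base-change for coherent sheaves on affinoids: $\sF(D_i)\cong \sF(D_{i+1})\otimes_{\sO_{\sX}(D_{i+1})}\sO_{\sX}(D_i)$, so the image of $\sF(D_{i+1})$ generates $\sF(D_i)$ as a finite Banach $\sO_{\sX}(D_i)$-module, and its closure is an $\sO_{\sX}(D_i)$-submodule by density of $\sO_{\sX}(D_{i+1})\to\sO_{\sX}(D_i)$ and continuity of multiplication, hence all of $\sF(D_i)$; the topological Mittag--Leffler lemma for such towers then kills ${\varprojlim}^1$ as you intend. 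With these two justifications filled in, your proof is a faithful reconstruction of the argument in \cite{MaculanPoineau:Stein}; the hypotheses ``separated, countable at infinity'' that you never invoke are inherited from their framework (where they are used to compare notions of Steinness), while your argument only needs the countable nested exhaustion with dense restriction maps.
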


\subsection{Torsors over $K$-analytic spaces}
In our proof of \autoref{thmx:main2}, we will need to understand analytic extensions of $\mG_{m,K}^{\an}$ and $A^{\an}$ by $\mG_{a,K} \times \mG_{m,K}^{t,\an}$ where $A$ is an abelian $K$-variety.

In the algebraic setting, we have that $\Ext(\cdot,\cdot)$ is a bi-additive functor on the category of commutative algebraic $k$-groups \cite[Chapter VII, Section 1, Proposition 1]{Serre:AlgebraicGroups}, and so for example,  we have
\[
\Ext(\mG_a \times \mG_m^{t},A)  = \Ext(\mG_a,A) \times \Ext(\mG_m^t,A).
\]
The proof of this result involves representability of quotients of commutative algebraic $k$-groups by subgroups, and the authors are not aware of such a result in the setting of $K$-analytic groups.  

To circumvent this issue, we will consider extensions as torsors over $K$-analytic spaces and utilize the cohomological interpretation of extensions as torsors. 
More precisely, this interpretation tells us that extensions of $\mG_{m,K}^{\an}$ and $A^{\an}$ by $\mG_{a,K}$ or $ \mG_{m,K}^{t,\an}$ are parametrized by certain first cohomology groups. 
Below, we prove two lemmas describing when such cohomology groups are trivial (in which case the extension is trivial) and when they are in bijection with cohomology groups in the algebraic category (in which case the extensions are algebraic). Finally, we show how one may identify $K$-analytic torsors for the product of two $K$-analytic groups as a closed $K$-analytic subspace of the fibered product of torsors for each respective group. 
This type of result will suffice for our purposes in Section \ref{sec:NAGGLV}. 

Below, we will discuss torsors as sheaves on a topological space.
Recall that we are mainly interested in understanding the cohomology groups of these sheaves, and since we will only be concerned with abelian sheaves, \cite[Proposition 1.3.6]{BerkovichEtaleCohomology} asserts that it suffices to consider these sheaves in the $G$-topology. 
For a $K$-analytic group $G$, the notation $\underline{G}$ will refer to the group sheaf associated to $G$. 

\begin{sublemma}\label{lemma:Gmtorsors}
Any $\underline{\mG_{a}^{\an}}$-torsor (resp.~$\underline{\mG_{m}^{t,\an}}$-torsor) over $\mG_{m}^{s,\an}$ is trivial and hence representable by the $K$-analytic space $\mG_{a}^{\an} \times \mG_{m}^{s,\an}$ (resp.~$ \mG_{m}^{t+s,\an}$). 
\end{sublemma}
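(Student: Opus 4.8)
The plan is to rephrase both assertions as the vanishing of a first cohomology group and then dispatch each using the quasi-Stein theory of Section~\ref{sec:prelimsBerk}. Since we work with abelian sheaves, \cite[Proposition 1.3.6]{BerkovichEtaleCohomology} lets us compute everything in the $G$-topology, and there the isomorphism classes of $\underline{\mathcal{G}}$-torsors over a base $\sX$ are classified by $H^1(\sX,\underline{\mathcal{G}})$ for an abelian group sheaf $\underline{\mathcal{G}}$. Taking sections into $\mG_a^{\an}$ (resp.~into each factor of $\mG_m^{t,\an}$) identifies $\underline{\mG_a^{\an}}$ with the structure sheaf $\sO_{\sX}$ under addition and $\underline{\mG_m^{t,\an}}$ with $(\sO_{\sX}^{\times})^{t}$ under multiplication. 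Thus, with $\sX = \mG_m^{s,\an}$, the two claims reduce to
\[
H^1(\mG_m^{s,\an},\sO) = 0 \qquad\text{and}\qquad \Pic(\mG_m^{s,\an}) = H^1(\mG_m^{s,\an},\sO^{\times}) = 0 ,
\]
and in each case triviality of the classifying class exhibits the torsor as the trivial one, hence as the asserted product.

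The additive case is immediate from the Stein theory. The algebraic group $\mG_m^{s}$ is affine, so by \autoref{thm:affineStein} its analytification $\mG_m^{s,\an}$ is quasi-Stein; it is moreover separated and countable at infinity. Since the structure sheaf is coherent (\autoref{prop:coherentBerk}), the non-Archimedean Theorem~B (\autoref{thm:Steinvanishing}) yields $H^1(\mG_m^{s,\an},\sO) = 0$. Hence every $\underline{\mG_a^{\an}}$-torsor over $\mG_m^{s,\an}$ is trivial, and therefore representable by $\mG_a^{\an}\times\mG_m^{s,\an}$.

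The multiplicative case is the crux, since $\sO^{\times}$ is not coherent and \autoref{thm:Steinvanishing} does not apply directly; the plan is to trivialize line bundles along the quasi-Stein exhaustion. First I would exhaust $\mG_m^{s,\an}$ by the affinoid Laurent polyannuli $D_0 \subset D_1 \subset \cdots$ furnished by the quasi-Stein structure, so that each restriction $\sO(D_{i+1}) \to \sO(D_i)$ has dense image. Each $D_i$ has trivial Picard group (the coordinate ring of a polyannulus is factorial), so a given line bundle $L$ admits a trivializing section $e_i$ over every $D_i$; writing $e_{i+1}|_{D_i} = u_i\,e_i$ produces a compatible system of units $u_i \in \sO^{\times}(D_i)$ whose class is the obstruction to gluing the $e_i$ into a global frame. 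I would then kill this obstruction by successive approximation: invoking the standard description of units on a polyannulus, every $u \in \sO^{\times}(D_i)$ factors uniquely as $c\,\underline{t}^{\,\underline{n}}(1+f)$ with $c\in K^{\times}$, $\underline{n}\in\ZZ^{s}$, and $\|f\|<1$, where the monomial and constant parts are already compatible under restriction while the principal-unit part $1+f$ can be adjusted using the density of $\sO(D_{i+1})\to\sO(D_i)$. Carrying this out amounts to showing $\varprojlim^1 \sO^{\times}(D_i) = 0$, and this Mittag--Leffler-type vanishing is the main obstacle of the proof; granting it, $L$ is globally trivial, whence $\Pic(\mG_m^{s,\an}) = 0$ and every $\underline{\mG_m^{t,\an}}$-torsor is trivial, hence representable by $\mG_m^{t+s,\an}$.
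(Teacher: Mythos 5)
Your reduction of both assertions to the vanishing of a first cohomology group, and your treatment of the additive case --- pass to the $G$-topology via \cite[Proposition 1.3.6]{BerkovichEtaleCohomology}, identify $\underline{\mG_a^{\an}}$ with the coherent sheaf $\sO$, invoke quasi-Steinness of the analytification of an affine group (\autoref{thm:affineStein}) and the non-Archimedean Theorem~B (\autoref{thm:Steinvanishing}) --- is exactly the argument in the paper.

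For the multiplicative case the paper does something much shorter: having identified $H^1(\mG_m^{s,\an},\underline{\mG_m^{t,\an}})$ with a product of copies of $\Pic(\mG_m^{s,\an})$, it simply cites \cite[Theorem 6.3.3.(2)]{FresnelVDPutRigidAnalytic} (together with \cite[Corollary 1.3.5]{BerkovichEtaleCohomology} to compare the analytic and $G$-topologies) for the vanishing of this Picard group. What you propose is essentially to reprove that cited theorem from scratch: exhaust by affinoid polyannuli $D_i$ with trivial Picard groups and kill the gluing obstruction by showing $\varprojlim^1\sO^{\times}(D_i)=0$ via the factorization $u=c\,\underline{t}^{\,\underline{n}}(1+f)$ with $\|f\|<1$. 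That is indeed the standard strategy behind the Fresnel--van der Put result, but you explicitly do not carry out the $\varprojlim^1$ vanishing (``granting it''), and that step is the entire mathematical content of the multiplicative half; it is also not a formality, since the density of $\sO(D_{i+1})\to\sO(D_i)$ does not immediately give density on the subgroups of principal units (a Laurent polynomial approximating $f$ with $\|f\|_{D_i}<1$ need not have norm $<1$ on $D_{i+1}$, and the usual $\log/\exp$ shortcut is delicate in mixed characteristic). So as submitted the proof is incomplete precisely there. The gap is benign in the sense that it can be closed either by executing the successive-approximation argument in detail or, as the paper does, by citing the literature for $\Pic(\mG_m^{s,\an})=0$.
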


\begin{proof}
For the first statement, \cite[\href{https://stacks.math.columbia.edu/tag/02FQ}{Tag 02FQ}]{stacks-project} implies that $\underline{\mG_{a}^{\an}}$-torsors over $\mG_{m}^{s,\an}$ are in bijective correspondence with elements of first sheaf cohomology group
\[
H^1(\mG_{m}^{s,\an},\underline{\mG_{a}^{\an}}), 
\]
and so it suffices to prove that this cohomology group is trivial.   
By \cite[Proposition 1.3.6]{BerkovichEtaleCohomology},  we have that 
\[
H^1(\mG_{m}^{s,\an},\underline{\mG_{a}^{\an}}) \cong H^1(\mG_{m,G}^{s,\an},\underline{\mG_{a}^{\an}}_G)
\]
where $\underline{\mG_{a}^{\an}}_G$ is the image of $\underline{\mG_{a}^{\an}}$ under the natural functor from the category of $\sO_{\mG_{m}^{s,\an}}$-modules to the category of $\sO_{\mG_{m,G}^{s,\an}}$-modules (cf.~\cite[p.~26]{BerkovichEtaleCohomology}). 
In the $G$-topology, we have that 
\[
H^1(\mG_{m,G}^{s,\an},\underline{\mG_{a}^{\an}}_G) \cong H^1(\mG_{m,G}^{s,\an},\sO_{\mG_{m,G}^{s,\an}}).
\]
Since $\mG_{m}^{s,\an}$ is a boundaryless (hence good), separated, countable at infinity $K$-analytic space, \autoref{prop:coherentBerk} implies that $\sO_{\mG_{m}^{s,\an}}$ is coherent,  and hence \cite[Proposition 1.3.6.(ii)]{BerkovichEtaleCohomology} asserts that 
\[
H^1(\mG_{m,G}^{s,\an},\sO_{\mG_{m,G}^{s,\an}}) \cong H^1(\mG_{m}^{s,\an},\sO_{\mG_{m}^{s,\an}}).
\]
By \autoref{thm:affineStein}, we also have that $\mG_{m}^{s,\an}$ is quasi-Stein, and thus, \autoref{thm:Steinvanishing} implies that $H^1(\mG_{m}^{s,\an},\sO_{\mG_{m}^{s,\an}}) = 0$, which implies that $H^1(\mG_{m}^{s,\an},\underline{\mG_{a}^{\an}}) = 0$.

For the second statement, we again observe that $\underline{\mG_{m}^{t,\an}}$-torsors over $\mG_{m}^{s,\an}$ are parametrized by $H^1(\mG_{m}^{s,\an},\underline{\mG_{m}^{t,\an}})$,  and so it suffices to show that this cohomology group is trivial. 
As $H^1(\mG_{m}^{s,\an},\underline{\mG_{m}^{t,\an}})$ is isomorphic to the product of $\Pic(\mG_{m}^{s,\an})$, 
our result follows from \cite[Theorem 6.3.3.(2)]{FresnelVDPutRigidAnalytic} and \cite[Corollary 1.3.5 and the bottom of p.~37]{BerkovichEtaleCohomology}.
\end{proof}

\begin{sublemma}\label{lemma:abelianvarietytorsors}
Let $A$ be an abelian $K$-variety. 
\begin{enumerate}
\item Any $\underline{\mG_{a}^{\an}}$-torsor over $A^{\an}$ is representable by an algebraic $K$-analytic space $G^{\an}$ which is the analytification of a commutative algebraic $K$-group $G$ that fits into the short exact sequence of algebraic groups
\[
0 \to \mG_a \to G \to A \to 0.
\]
\item Any $\underline{\mG_{m}^{t,\an}}$-torsor over $A^{\an}$ is representable by an algebraic $K$-analytic space $G^{\an}$ which is the analytification of a commutative algebraic $K$-group $G$ that fits into the short exact sequence of algebraic groups
\[
0 \to \mG_m^{t} \to G \to A \to 0.
\]
\end{enumerate}
\end{sublemma}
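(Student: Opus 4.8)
The plan is to follow the same template as \autoref{lemma:Gmtorsors}: classify each torsor by the first cohomology group of the relevant group sheaf, pass to the $G$-topology to replace $\underline{\Ga^{\an}}$ by the coherent sheaf $\sO_{A^{\an}}$ (respectively $\underline{\Gm^{\an}}$ by $\sO_{A^{\an}}^{\times}$), and then evaluate that cohomology. The essential difference is that $A^{\an}$ is proper rather than Stein, so \autoref{thm:Steinvanishing} no longer applies and the cohomology does \emph{not} vanish; instead I would use a non-Archimedean GAGA comparison, valid because $A$ is proper, to identify the analytic cohomology of $A^{\an}$ with the algebraic cohomology of $A$, and then recognize the latter as the group classifying algebraic extensions of $A$.

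For part (1), as in \autoref{lemma:Gmtorsors}, \cite[Tag 02FQ]{stacks-project} identifies $\underline{\Ga^{\an}}$-torsors over $A^{\an}$ with $H^1(A^{\an}, \underline{\Ga^{\an}})$, and the $G$-topology comparison of \cite[Proposition 1.3.6]{BerkovichEtaleCohomology} together with \autoref{prop:coherentBerk} gives
\[
H^1(A^{\an}, \underline{\Ga^{\an}}) \cong H^1(A^{\an}, \sO_{A^{\an}}).
\]
Non-Archimedean GAGA for the proper space $A^{\an}$ then supplies a canonical isomorphism $H^1(A^{\an}, \sO_{A^{\an}}) \cong H^1(A, \OO_A)$, and the standard identification $H^1(A, \OO_A) \cong \Ext(A, \Ga)$ realizes each class as an algebraic extension $0 \to \Ga \to G \to A \to 0$. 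Here the multiplicative subtlety below is absent: by K\"unneth every class in $H^1(A,\OO_A)$ is primitive, so $\Ext(A,\Ga) \to H^1(A,\OO_A)$ is an isomorphism and every torsor genuinely carries a group law. This gives the stated $G$.

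For part (2), the same bijection parametrizes $\underline{\Gm^{t,\an}}$-torsors over $A^{\an}$ by $\Pic(A^{\an})^{t}$, and GAGA for line bundles on the proper scheme $A$ yields $\Pic(A^{\an}) \cong \Pic(A)$; hence each torsor descends to an algebraic $\Gm^t$-torsor over $A$. To upgrade this to a group extension I would invoke the Weil--Barsotti isomorphism $\Ext(A, \Gm) \cong \Pic^0(A)$, which identifies precisely those torsors admitting a compatible group law with algebraic extensions $0 \to \Gm \to G \to A \to 0$, and then take the product over the $t$ factors.

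The main obstacle, and the step requiring the most care, is the non-Archimedean GAGA input: one must justify in the Berkovich setting the comparison isomorphisms $H^i(A^{\an}, \sO_{A^{\an}}) \cong H^i(A, \OO_A)$ and $\Pic(A^{\an}) \cong \Pic(A)$ for the proper space $A^{\an}$ (for instance via Berkovich's comparison with the rigid analytic theory, where proper GAGA is available). The second delicate point is compatibility with the group structures, so that an analytic extension of $A^{\an}$ corresponds to an algebraic extension of $A$ and not merely to an abstract algebraic torsor; in the multiplicative case this amounts to tracking that the extension classes lie in $\Pic^0$ and that the Weil--Barsotti identification is respected by analytification, i.e.\ that $\Pic^0(A^{\an}) \cong \Pic^0(A)$ compatibly with the group laws.
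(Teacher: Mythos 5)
Your proposal follows essentially the same route as the paper's proof: torsors are classified by the first cohomology of the group sheaf, the $G$-topology comparison reduces this to coherent (resp.\ $\Pic$) cohomology, and Berkovich analytic GAGA for the proper space $A^{\an}$ identifies $H^1(A^{\an},\sO_{A^{\an}})$ with $H^1(A,\sO_A)$ and $\Pic(A^{\an})$ with $\Pic(A)$. The only difference is that you additionally spell out the identifications $H^1(A,\sO_A)\cong\Ext(A,\mG_a)$ and (via Weil--Barsotti) $\Pic^0(A)\cong\Ext(A,\mG_m)$ needed to equip the algebraized torsor with a compatible group law, a step the paper's proof leaves implicit.
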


\begin{proof}
For part (1), we first note that the arguments from \autoref{lemma:Gmtorsors} imply that $\underline{\mG_{a}^{\an}}$-torsors over $A^{\an}$ are in bijective correspondence with $H^1(A^{\an},\underline{\mG_{a}^{\an}})$, which is isomorphic to $H^1(A^{\an},\sO_{A^{\an}})$. 
Therefore, it suffices to show that $H^1(A^{\an},\sO_{A^{\an}})$ is isomorphic to an algebraic cohomology group. 
By Berkovich analytic GAGA \cite[Corollary 3.4.10]{BerkovichSpectral}, we have that $H^1(A^{\an},\sO_{A^{\an}}) \cong H^1(A,\sO_A)$, 
which gives the result.

For part (2), again we have that $\underline{\mG_{m}^{t\an}}$-torsors over $A^{\an}$ are parametrized by $H^1(A^{\an},\underline{\mG_{m}^{s,\an}}) \cong \prod_{i=1}^s \Pic(A^{\an})$.  By Berkovich analytic GAGA \cite[Corollary 3.4.10]{BerkovichSpectral}, we have that  $\Pic(A^{\an}) \cong \Pic(A)$, 
\end{proof}

Before proving our final lemma, we note that fibered products exist in the category of $K$-analytic spaces by \cite[Proposition 1.4.1]{BerkovichEtaleCohomology}. 

\begin{sublemma}\label{lemma:productrepresent}
Let $\sX$ be a $K$-analytic space,  let $H_1,H_2$ be two abelian $K$-analytic groups,  and let $G \cong H_1 \times_{\sM(K)} H_2$ be their fiber product, which is again an abelian $K$-analytic group. 
Let $ \pi\colon \sU \to \sX$ be a representable $\underline{G}$-torsor over $\sX$.

Suppose that the $\underline{H_1}$-torsor $\pi_1\colon \sU/\underline{H_2} \to \sX$ and the $\underline{H_2}$-torsor $\pi_2\colon \sU/\underline{H_1} \to \sX$ are both representable as $K$-analytic spaces over $\sX$, and let $\sZ_1 \cong \sU/\underline{H_2}$ and $\sZ_2 \cong \sU/\underline{H_1}$ be the $K$-analytic spaces representing these functors. 
Then, $\sU$ is isomorphic to the following fibered product
\[\begin{tikzcd}[ampersand replacement=\&]
	{\mathscr{U}} \&\& {\mathscr{Z}_1 \times_{\sM(K)} \mathscr{Z}_2} \\
	\\
	{\mathscr{X}} \&\& {\mathscr{X} \times_{\sM(K)} \mathscr{X}}
	\arrow["\Delta", hook, from=3-1, to=3-3]
	\arrow["{\pi_2}", shift left=5, from=1-3, to=3-3]
	\arrow[hook, from=1-1, to=1-3]
	\arrow["{\pi_1}"', shift right=5, from=1-3, to=3-3]
	\arrow[from=1-1, to=3-1]
	\arrow["\ulcorner"{anchor=center, pos=0.1}, draw=none, from=1-1, to=3-3]
\end{tikzcd}\]
as $\underline{G}$-torsors over $\sX$ where $\Delta\colon \sX \hookrightarrow \sX \times_{\sM(K)} \sX$ is the diagonal morphism. 
\end{sublemma}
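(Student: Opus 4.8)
The plan is to construct a canonical morphism of $\underline{G}$-torsors from $\sU$ to the fibered product and then appeal to the elementary fact that any morphism of torsors under the same group sheaf over the same base, lying over the identity, is automatically an isomorphism.

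First I would verify that $\sZ_1 \times_{\sX} \sZ_2$ — that is, the pullback of $\sZ_1 \times_{\sM(K)} \sZ_2$ along the diagonal $\Delta\colon \sX \hookrightarrow \sX \times_{\sM(K)} \sX$ depicted in the diagram — carries a natural structure of $\underline{G}$-torsor over $\sX$. Fibered products exist in the $K$-analytic category by \cite[Proposition 1.4.1]{BerkovichEtaleCohomology}, and $G = H_1 \times_{\sM(K)} H_2$ acts componentwise. Choosing an admissible affinoid cover of $\sX$ that simultaneously trivializes $\pi_1$ and $\pi_2$ (refine the two separate trivializing covers), over each member $\sV$ the fibered product becomes $(\sV \times_{\sM(K)} H_1)\times_{\sV}(\sV\times_{\sM(K)} H_2)\cong \sV \times_{\sM(K)} G$; hence it is locally trivial and thus a $\underline{G}$-torsor.

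Next I would build the comparison morphism. The quotient presentations $\sZ_1 \cong \sU/\underline{H_2}$ and $\sZ_2 \cong \sU/\underline{H_1}$ supply canonical projections $q_1\colon \sU \to \sZ_1$ and $q_2\colon \sU \to \sZ_2$ over $\sX$, and since $\pi_1\circ q_1 = \pi = \pi_2 \circ q_2$ the pair $(q_1,q_2)$ factors through the diagonal and defines a morphism $\Phi\colon \sU \to \sZ_1\times_{\sX}\sZ_2$ over $\sX$. I would then check that $\Phi$ is $\underline{G}$-equivariant: for a section $(h_1,h_2)$ of $\underline{G} = \underline{H_1}\times\underline{H_2}$, the factor $h_2$ acts trivially on the quotient $\sU/\underline{H_2} = \sZ_1$, so $q_1$ intertwines the $\underline{G}$-action on $\sU$ with the $\underline{H_1}$-action on $\sZ_1$, and symmetrically $q_2$ intertwines it with the $\underline{H_2}$-action on $\sZ_2$; consequently $\Phi$ intertwines the $\underline{G}$-action on $\sU$ with the componentwise $\underline{G}$-action on the target.

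Finally, $\Phi$ is a morphism of $\underline{G}$-torsors over $\sX$ lying over $\mathrm{id}_{\sX}$, so it is an isomorphism: over a common trivializing member $\sV$ both torsors are identified with $\sV\times_{\sM(K)}G$, and $\underline{G}$-equivariance over $\sV$ forces $\Phi$ to have the form $(\sv,g)\mapsto (\sv, \psi(\sv)\cdot g)$ for a morphism $\psi\colon \sV \to G$, which is invertible; these local inverses glue to a global inverse. Equivalently, one can phrase the argument cohomologically: since $\underline{G}=\underline{H_1}\times\underline{H_2}$ yields $H^1(\sX,\underline{G}) \cong H^1(\sX,\underline{H_1})\times H^1(\sX,\underline{H_2})$ in the $G$-topology via \cite[Proposition 1.3.6]{BerkovichEtaleCohomology}, and both $\sU$ and $\sZ_1\times_{\sX}\sZ_2$ are carried to the pair of classes $([\sZ_1],[\sZ_2])$ under this isomorphism. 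I expect the main obstacle to be purely bookkeeping in the $G$-topology, namely producing a single admissible cover simultaneously trivializing $\pi$, $\pi_1$, and $\pi_2$, and then transporting the resulting sheaf-theoretic isomorphism back to an isomorphism of representable $K$-analytic spaces by means of the representability hypotheses.
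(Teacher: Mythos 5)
Your proposal is correct and follows essentially the same route as the paper: both construct the canonical morphism $\sU \to \sZ_1 \times_{\sX} \sZ_2$ from the quotient projections via the universal property of the fibered product, observe it is a morphism of $\underline{G}$-torsors over $\sX$, and conclude by the standard fact that any such morphism is an isomorphism. Your additional verifications (that the target is indeed a $\underline{G}$-torsor, and the cohomological reformulation) are details the paper leaves implicit but do not change the argument.
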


\begin{proof}
Let 
\[
\sX \times_{\Delta,\sX \times_{\sM(K)} \sX,\pi_1\times \pi_2} (\sZ_1 \times_{\sM(K)} \sZ_{2})
\]
denote the fibered product mentioned in the lemma statement. 
The universal property of fibered products implies that we have a $K$-analytic morphism
\[
\sU \to \sX \times_{\Delta,\sX \times_{\sM(K)} \sX,\pi_1\times \pi_2} (\sZ_1 \times_{\sM(K)} \sZ_{2}).
\]
This $K$-analytic morphism is $G$-equivariant, and moreover, is a morphism of $\underline{G}$-torsors over $\sX$. 
Since every morphism of $\underline{G}$-torsors is an isomorphism, our result follows. 
\end{proof}

\section{\bf Notions of hyperbolicity}
\label{sec:hyperbolicity}
In this section, we will describe various types of exceptional loci, algebraic notions of hyperbolicity, and non-Archimedean notions of hyperbolicity.

\subsection{Exceptional sets of closed subvarieties of commutative algebraic groups}
To begin, we define the Lang exceptional set, a variant of the Lang exceptional set, and the Kawamata locus. 
We refer the reader to Section \ref{sec:conventions} for our conventions concerning $k$-varieties and algebraic $k$-groups. 

\begin{subdefinition}\label{defn:Langexceptional}
Let $X$ be a complete $k$-variety. 
The \cdef{Lang exceptional locus of $X$} is the Zariski closure of the images of all non-constant rational maps from algebraic $k$-groups to $X$. 
\end{subdefinition}

\begin{subremark}
Since $X$ is complete and any algebraic $k$-group $G$ is smooth,  it is well-known that the indeterminacy locus of any non-constant rational map $G \dashrightarrow X$ has codimension $\geq 2$ i.e., the rational map is defined in codimension 1.
Furthermore, when $X$ is pure i.e., contains no rational curves, we can assume that the rational maps are in fact morphisms by \cite[Lemma 3.5]{JKam}. 
This condition holds for example when $X$ is a closed subvariety of an abelian $k$-variety. 
\end{subremark}

\autoref{defn:Langexceptional} motivates the following definition, which is an analogue of \autoref{defn:Langexceptional} in the non-complete setting. 

\begin{subdefinition}\label{defn:Langlike}
Let $X$ be a $k$-variety. 
The \cdef{Lang-like exceptional locus $\Exc'(X)$ of $X$} is the Zariski closure of the union of the images of all rational maps $G\dashrightarrow X$ defined in codimension 1 where $G$ is an algebraic $k$-group.  
\end{subdefinition}

\begin{sublemma}\label{lemma:LanglikeGmbigopenAVtest}
Let $X$ be a $k$-variety. 
The Lang-like exceptional locus $\Exc'(X)$ of $X$ is equal to the Zariski closure of the union of all non-constant morphisms from $\mG_m$ or dense open subschemes $U$ of abelian varieties $A$ with $\codim(A\setminus U) \geq 2$ to $X$. 
\end{sublemma}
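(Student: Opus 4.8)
The plan is to prove the two inclusions separately, writing $E$ for the Zariski closure appearing on the right-hand side of the statement. The inclusion $E \subseteq \Exc'(X)$ is immediate from the definitions: $\mG_m$ is an algebraic $k$-group and a morphism $\mG_m \to X$ is in particular a rational map defined everywhere, hence in codimension $1$; and if $U \subseteq A$ is a dense open of an abelian $k$-variety with $\codim(A \setminus U) \geq 2$, then a morphism $U \to X$ is a rational map $A \dashrightarrow X$ whose indeterminacy locus lies in $A \setminus U$ and so has codimension $\geq 2$, i.e.\ it is defined in codimension $1$. Since $A$ is an algebraic $k$-group, both types occur among the maps defining $\Exc'(X)$. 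For the reverse inclusion it suffices to show that the image of an arbitrary non-constant rational map $f \colon G \dashrightarrow X$ defined in codimension $1$, with $G$ an algebraic $k$-group, lies in the closed set $E$. By Chevalley's theorem (\autoref{thm:Chevalley}) there is an exact sequence $0 \to H \to G \to A \to 0$ with $H$ connected linear and $A$ an abelian $k$-variety; write $\pi \colon G \to A$. I would split according to the behaviour of $f$ on the fibres of $\pi$, which are the cosets of $H$.

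Case 1: $f$ is non-constant on a general fibre $gH$. For general $g$ a fibre-dimension estimate applied to $\pi$ restricted to $G \setminus \operatorname{dom} f$ shows that $gH \cap (G \setminus \operatorname{dom} f)$ has codimension $\geq 2$ in $gH$, so $f|_{gH}$, read via translation as $H \dashrightarrow X$, is non-constant and defined in codimension $1$. Now a connected linear $k$-group in characteristic zero is generated by its one-parameter subgroups isomorphic to $\mG_a$ or $\mG_m$ (in the commutative case $H \cong \mG_a^{\alpha} \times \mG_m^{\mu}$ this is immediate; in general it follows from the structure theory of linear algebraic groups). Since ``$f|_{gH}$ is constant on a coset $g'B$'' is a closed condition on the coset and these subgroups generate $H$, non-constancy of $f|_{gH}$ forces some one-parameter subgroup $B \cong \mG_a$ or $\mG_m$ along whose general coset $g'B$ the restriction is non-constant. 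As $g'B$ is a curve, a general such coset avoids $G \setminus \operatorname{dom} f$ entirely, so $f|_{g'B}$ is a genuine non-constant morphism $B \to X$. If $B \cong \mG_m$ this is one of the maps defining $E$; if $B \cong \mG_a$, I would restrict along the dense open immersion $\mG_m \hookrightarrow \mG_a$, obtaining a non-constant morphism $\mG_m \to X$ with the same image closure. Letting $g'$ and then $g$ range over the relevant dense families, these images are dense in $\operatorname{im} f$, whence $\overline{\operatorname{im} f} \subseteq E$.

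Case 2: $f$ is constant on a general fibre of $\pi$. Then $f$ factors rationally as $f = \bar f \circ \pi$ for a non-constant $\bar f \colon A \dashrightarrow X$. By faithfully flat descent along the smooth surjection $\pi$, the indeterminacy locus $T$ of $\bar f$ satisfies $\pi^{-1}(T) \subseteq G \setminus \operatorname{dom} f$; since $\pi$ is equidimensional with fibres of dimension $\dim H$, the codimension-$\geq 2$ bound on $G \setminus \operatorname{dom} f$ forces $\codim_A T \geq 2$. Thus $\bar f$ is a non-constant map from a dense open $U = \operatorname{dom}\bar f \subseteq A$ with $\codim(A \setminus U) \geq 2$, which is exactly one of the maps defining $E$, and $\operatorname{im} f = \operatorname{im}\bar f \subseteq E$.

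The main obstacle is the bookkeeping in Case 1: one must simultaneously arrange that a general coset of the chosen one-parameter subgroup (a) lies in $\operatorname{dom} f$, so that the restriction is an honest morphism, and (b) is one along which $f$ is non-constant. Both are generic conditions, but justifying (b) requires the generation of $H$ by its $\mG_a$- and $\mG_m$-subgroups together with the fact that constancy along cosets is closed, and it is precisely the reduction from $\mG_a$ to the dense open $\mG_m \subset \mG_a$ that explains why $\mG_a$ does not appear in the statement. The fibre-dimension estimates in both cases are routine but must be stated with care, since $G \setminus \operatorname{dom} f$ could a priori dominate $A$.
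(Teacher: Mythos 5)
Your argument is correct, but it is not the route the paper takes: the paper's entire proof of the nontrivial inclusion is a citation of \cite[Lemma 3.16]{JavanpeykarXie:Pseudogroupless}, whereas you reprove that statement from scratch. Your strategy --- Chevalley's sequence $0\to H\to G\to A\to 0$ followed by the dichotomy according to whether $f$ is constant on the general fibre of $\pi\colon G\to A$ --- is the natural one and holds up. Case 2 is clean once one notes that constancy on the \emph{general} fibre upgrades to constancy on every fibre of $\pi|_{\operatorname{dom} f}$ (because $G\times_A G$ is an $H\times H$-torsor over $A$, hence irreducible, and the coequalizing locus is closed), after which fppf descent defines $\bar f$ on the open set $\pi(\operatorname{dom} f)$ and your fibre-dimension count gives $\codim_A T\ge 2$. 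In Case 1 the two points you flag as bookkeeping are genuinely the only delicate ones: (i) generation of a connected linear $k$-group in characteristic zero by subgroups isomorphic to $\mG_a$ or $\mG_m$ (immediate for commutative $H$ by \autoref{thm:commlinear}, standard in general via the Levi decomposition); and (ii) the deduction that constancy along the general coset of every generator forces constancy of $f|_{gH}$ --- this needs the word/chain argument (a general pair of points of $H$ is joined by finitely many coset moves staying in the domain), not merely the observation that constancy on a coset is a closed condition on the coset. Your reduction from $\mG_a$ to its dense open $\mG_m$ is exactly the reason $\mG_a$ does not appear in the statement. What your proof buys is self-containedness; what the paper's citation buys is brevity and the fact that the same reference simultaneously supplies the ``modulo $\Delta$'' refinement used in \autoref{lemma:testgroupless}, which your argument would need to be rerun to obtain.
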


\begin{proof}
By definition, the latter set is contained in $\Exc'(X)$, and the reverse inclusion is precisely \cite[Lemma 3.16]{JavanpeykarXie:Pseudogroupless}. 
\end{proof}

We will be primarily interested in the setting where $X$ is a closed subvariety of an algebraic $k$-group. 
In this case, we may simplify the description of the Lang-like exceptional set.

\begin{sublemma}\label{lemma:LanglikeGmAV}
Let $X$ be a closed subvariety of an algebraic $k$-group. 
The Lang-like exceptional locus $\Exc'(X)$ of $X$ is equal to the Zariski closure of the union of all non-constant morphisms from $\mG_m$ or abelian varieties $A$ to $X$. 
\end{sublemma}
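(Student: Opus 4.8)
The plan is to prove the two inclusions separately, treating one direction as essentially formal and the other as a reduction from \autoref{lemma:LanglikeGmbigopenAVtest} followed by an extension argument. The easy inclusion is that the right-hand set is contained in $\Exc'(X)$: a non-constant morphism from $\mG_m$ or from an abelian variety $A$ to $X$ is in particular a non-constant rational map from an algebraic $k$-group to $X$ that is defined everywhere, hence in codimension $1$, so its image lies in $\Exc'(X)$. Taking the Zariski closure of the union of all such images therefore stays inside $\Exc'(X)$.

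For the reverse inclusion I would begin from \autoref{lemma:LanglikeGmbigopenAVtest}, which already expresses $\Exc'(X)$ as the Zariski closure of the union of images of non-constant morphisms from $\mG_m$ and from dense open subschemes $U \subseteq A$ of abelian varieties with $\codim(A \setminus U)\geq 2$. The contributions from $\mG_m$ are visibly in the right-hand set, so it remains to handle a non-constant morphism $f\colon U \to X$ with $U$ such a big open subscheme of an abelian variety $A$. Composing with the closed immersion $X \hookrightarrow G$, I view $f$ as a rational map $A \dashrightarrow G$ whose indeterminacy locus is contained in $A \setminus U$ and thus has codimension $\geq 2$. The key input is Weil's extension theorem: since $A$ is smooth and $G$ is a group variety (recall $G$ is connected, smooth, and separated, hence integral over the algebraically closed $k$), the indeterminacy locus of a rational map $A \dashrightarrow G$ is either empty or of pure codimension $1$. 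As it has codimension $\geq 2$, it must be empty, so $f$ extends to a morphism $\bar f\colon A \to G$.

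It then remains to check that $\bar f$ lands in $X$ with the same image closure as $f$. Because $X$ is closed in $G$ and $U$ is Zariski dense in $A$, continuity of $\bar f$ gives $\bar f(A) = \bar f(\overline{U}) \subseteq \overline{f(U)} \subseteq X$, so $\bar f$ is a non-constant morphism $A \to X$; moreover $\overline{\bar f(A)} = \overline{f(U)}$, so replacing each $f$ by its extension $\bar f$ does not alter the Zariski closure of the union of images. Hence $\Exc'(X)$ is contained in the right-hand set, which together with the easy inclusion finishes the proof. I expect the main obstacle to be the clean invocation of Weil's extension theorem in this scheme-theoretic setting: one must confirm that the hypotheses on $A$ (smoothness) and $G$ (group variety) are exactly those guaranteeing the \emph{pure codimension one} form of the conclusion, and pin down a precise reference giving that stronger statement rather than merely ``defined in codimension one,'' since it is the pure-codimension-one dichotomy that forces the codimension $\geq 2$ indeterminacy locus to be empty.
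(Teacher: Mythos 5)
Your proposal is correct and follows essentially the same route as the paper: reduce to the description in \autoref{lemma:LanglikeGmbigopenAVtest} and then extend morphisms from big open subschemes of abelian varieties to the whole abelian variety via Weil's extension theorem, which is exactly the reference the paper invokes (\cite[Theorem 4.4.1]{BLR}, stating that a rational map from a smooth $k$-variety to a group variety defined in codimension $1$ extends to a morphism). The only difference is that you spell out the (easy) verification that the extension still lands in the closed subvariety $X$ and does not change the closure of the image, which the paper leaves implicit.
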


\begin{proof}
This follows from \autoref{lemma:LanglikeGmbigopenAVtest} and the fact that rational maps from smooth $k$-varieties to algebraic $k$-groups which are defined in codimension 1 uniquely extend to $k$-morphisms \cite[Theorem 4.4.1]{BLR}.  
\end{proof}

When $X$ is a closed subvariety of an algebraic $k$-group, we can use \autoref{lemma:LanglikeGmAV} to write
\begin{equation}\label{eqn:Langlikebreakdown}
\Exc'(X) = \Exc'_{\T}(X) \cup \Exc'_{\A}(X)
\end{equation}
where $\Exc'_{\T}(X)$ and $\Exc'_{\A}(X)$ are the Zariski closures of the unions of images of morphism from $\mG_m$ to $X$ and from abelian $k$-varieties to $X$, respectively.  
We will refer to $\Exc'_{\T}(X)$ and $\Exc'_{\A}(X)$ as the \cdef{toric} and \cdef{abelian exceptional locus} of $X$, respectively. 

In addition to the Lang-like exceptional locus, we define the Kawamata locus. 

\begin{subdefinition}\label{defn:Kawamatalocus}
Let $X$ be a closed subvariety of an algebraic $k$-group $G$. 
The \cdef{Kawamata locus of $X$} is the union $Z(X)$ of all non-trivial translated group subvarieties of $G$ contained in $X$. 
\end{subdefinition}

It is known that the Kawamata locus of $X$ is closed by work of Abramovich. 

\begin{subtheorem}[\protect{\cite[Theorem 1]{Abram}}]\label{thm:Kawamataclosed}
Let $X$ be a closed subvariety of an algebraic $k$-group $G$. 
Then $Z(X)$ is closed. 
\end{subtheorem}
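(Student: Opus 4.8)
The plan is to realize $Z(X)$ as the locus swept out by a proper, finite-type family of translated subgroups, so that closedness follows from properness of the parameter space together with a bounded-degree estimate. First I would fix an equivariant completion $G \hookrightarrow \overline{G}$ in the sense of Serre \cite{Serre:Appendix}, let $\overline{X}$ be the Zariski closure of $X$ in $\overline{G}$, and fix an ample class $\eta$ on $\overline{G}$. The key quantitative input is that any translated subgroup-variety $C = a + H$ with $\dim H \geq 1$ and $C \subseteq X$ has closure $\overline{C}$ in $\overline{G}$ whose degree with respect to $\eta$ is bounded purely in terms of $\eta$ and $\dim \overline{X}$, independently of $C$, simply because $\overline{C} \subseteq \overline{X}$. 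Consequently the cycles $\overline{C}$ form a bounded family, parametrized by a locally closed subscheme $\mathcal{H}$ of a suitable Chow variety (or $\Hilb$) of $\overline{G}$, which is of finite type.

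To see why a single relevant subgroup already produces a closed locus, which motivates the general argument, fix a connected subgroup $H$ and consider the quotient $q \colon G \to G/H$ (\autoref{thm:quotientsrepn}). A fiber $q^{-1}(b)$ lies in $X$ precisely when the fiber of $q|_X$ over $b$ has dimension $\dim H$, and since $q^{-1}(b)$ is irreducible this is an equality-of-dimension condition. At least when $H$ is proper, so that $q|_X$ is proper, upper semicontinuity of fiber dimension shows that $S_H = \{b \in G/H : q^{-1}(b) \subseteq X\}$ is closed, and hence $q^{-1}(S_H)$ is closed. The difficulty is that $G$ carries infinitely many subgroups $H$, so one cannot naively take the union over all of them, and for non-proper $H$ the semicontinuity step breaks down.

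The global statement is recovered from the bounded family: the condition $\overline{C} \subseteq \overline{X}$ cuts out a subscheme $\mathcal{H}_X \subseteq \mathcal{H}$, whose projectivized closure is proper, and $Z(X)$ is obtained by intersecting with $G$ and with $X$ the image of the universal cycle over $\mathcal{H}_X$ under the projection to $\overline{G}$. Properness makes this image closed, and boundedness of the degrees forces finitely many irreducible components, so $Z(X)$ is a finite union of closed sets. The main obstacle, and the genuinely delicate part of the argument, is that when $H$ is non-proper (a copy of $\mG_m$ or $\mG_a$, precisely the situation forced by positive toric or unipotent rank), the limit in the compactified parameter space of a family of coset-closures need not itself be the closure of a single translated subgroup: the limiting cycle can break up or acquire components along the divisor at infinity $D = \overline{G} \setminus G$. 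One must therefore analyze these degenerations near $D$ and verify that the points of $X$ they contribute already lie on translated subgroups contained in $X$, so that passing to the closed image does not enlarge $Z(X)$ beyond the union defining it. This degeneration analysis along $D$, rather than the semicontinuity bookkeeping, is the crux of the proof, and it is exactly the step that Abramovich carries out in the semi-abelian setting.
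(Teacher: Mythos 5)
The paper does not actually prove this statement; it is quoted directly from Abramovich (\cite[Theorem 1]{Abram}), so there is no internal argument to compare yours against. Judged on its own terms, your proposal has a genuine gap at its foundation: the ``key quantitative input'' that any translated subgroup $C = a + H$ contained in $X$ has $\deg_{\eta}\overline{C}$ bounded ``simply because $\overline{C}\subseteq\overline{X}$'' is false. Containment in a fixed projective variety does not bound degree. Concretely, take $G = E\times E\times E'$ a product of elliptic curves and $X = E\times E\times\{0\}$; the graphs $\Gamma_n = \{(x,nx,0): x\in E\}$ are positive-dimensional algebraic subgroups contained in $X$ whose $\eta$-degrees grow like $n^2$. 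So the collection of all translated subgroups contained in $X$ is not a bounded family, it is not parametrized by a finite-type locus in any Chow or Hilbert scheme, and the entire properness-of-the-parameter-space strategy collapses at the first step. (In this example $Z(X)=X$ is of course closed, but your argument does not show it.) A version of your idea can be salvaged only by restricting to \emph{maximal} translated subgroups contained in $X$, and proving that \emph{those} form a bounded family --- but that finiteness statement is essentially the whole content of the theorem (it is the finiteness result of Abramovich/Bogomolov/Hindry type), not something one gets for free from $\overline{C}\subseteq\overline{X}$.

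Two smaller points. First, your warm-up observation is correct but can be done more simply and without any properness hypothesis on $H$: for a fixed connected subgroup $H$, the set $\{x\in G : x+H\subseteq X\}=\bigcap_{h\in H}(X-h)$ is an intersection of closed sets, hence closed, and is $H$-invariant; the genuine difficulty is indeed only the infinite union over $H$. Second, even granting a bounded family, your final paragraph explicitly defers the degeneration analysis along the boundary divisor to Abramovich rather than carrying it out, and one would additionally have to rule out limits \emph{inside} $G$ of cosets of varying subgroups $H_t$ that are not themselves cosets. As written, the proposal therefore both rests on a false lemma and leaves the acknowledged crux unproved.
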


Furthermore, if $G$ is semi-abelian, the Kawamata locus of $X$ equals the Lang-like exceptional locus of $X$, as a consequence of the following theorem of Iitaka.  

\begin{subtheorem}[\protect{\cite[Theorem 2]{IitakaLogForms}}]\label{thm:Iitakagrouphom}
Any $k$-morphism from one semi-abelian variety to another is a translate of a group $k$-homomorphism. 
\end{subtheorem}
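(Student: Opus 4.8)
The plan is to reduce the statement to two classical facts — rigidity for abelian varieties and Rosenlicht's description of the units of an algebraic group — glued together through the structure sequences of the two semi-abelian varieties. Write them as $0 \to T_G \to G \xrightarrow{\pi_G} A_G \to 0$ and $0 \to T_H \to H \xrightarrow{\pi_H} A_H \to 0$ with $T_G, T_H$ tori and $A_G, A_H$ abelian varieties (see \autoref{corollary:structurecommutative} and \autoref{exam:abeliansemiabelian}). Given a $k$-morphism $f\colon G \to H$, after composing with the translation of $H$ by $-f(e_G)$ I may assume $f(e_G) = e_H$, and it then suffices to prove that $f$ is a group homomorphism.

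First I would analyze the abelian part $\pi_H \circ f \colon G \to A_H$. Each fibre of $\pi_G$ is a translate of the torus $T_G$, and any $k$-morphism from a torus (a unirational variety) to an abelian variety is constant, since a morphism $\mG_m \to A_H$ extends over $\mathbb{P}^1$ and is therefore constant. Hence $\pi_H \circ f$ is constant on the fibres of $\pi_G$ and factors as $g \circ \pi_G$ for a morphism $g\colon A_G \to A_H$ with $g(e) = e$; by the rigidity theorem for abelian varieties, $g$ is a group homomorphism.

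The crux is to control the toric direction. Consider the deviation morphism $d\colon G\times G \to H$, $d(x,y) = f(x+y) - f(x) - f(y)$. Applying $\pi_H$ and using that $\pi_G$ and $g$ are homomorphisms gives $\pi_H \circ d = 0$, so $d$ factors through the closed subgroup $T_H \cong \mG_m^s$; moreover $d$ vanishes on each axis $G\times\{e_G\}$ and $\{e_G\}\times G$ because $f(e_G) = e_H$. Each coordinate $d_i \colon G\times G \to \mG_m$ is a global unit on the connected algebraic group $G\times G$, so by Rosenlicht's theorem $d_i = c_i\,\chi_i$ for a scalar $c_i \in k^\times$ and a character $\chi_i \in \Hom(G\times G, \mG_m)$. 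Evaluating on the axes forces $c_i = 1$ and $\chi_i$ to be trivial on both $G\times\{e_G\}$ and $\{e_G\}\times G$, whence $\chi_i(x,y) = \chi_i(x,e_G)\chi_i(e_G,y)$ is identically trivial. Therefore $d \equiv e_H$, i.e.\ $f(x+y) = f(x) + f(y)$, so $f$ is a homomorphism and the original $f$ is a translate of one.

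The main obstacle is exactly the non-completeness of semi-abelian varieties: the rigidity lemma that settles the abelian case immediately fails once a torus factor is present, and the role of the global-units input (Rosenlicht) is to replace rigidity in the toric direction. An alternative route, matching the cited source, is via logarithmic differential forms: the invariant $1$-forms on a semi-abelian variety coincide with the global logarithmic $1$-forms on an equivariant compactification, $f$ pulls these back to invariant forms, and a morphism that pulls back all invariant forms identically and agrees at a point with a homomorphism must equal it. I would expect the delicate step there to be the functoriality of logarithmic forms under $f$ — controlling the behaviour along the boundary divisor — which the units argument avoids entirely.
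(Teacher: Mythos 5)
Your proof is correct. Note that the paper does not actually prove this statement: it imports it directly from Iitaka (Theorem 2 of \emph{op.~cit.}), whose argument runs through logarithmic $1$-forms on an equivariant compactification --- precisely the ``alternative route'' you sketch in your last paragraph. Your argument is therefore a genuinely different and essentially self-contained derivation: the abelian quotient is handled by the classical rigidity theorem, and the toric deviation $d(x,y)=f(x+y)-f(x)-f(y)$, which lands in $T_H$ once the abelian part is known to be a homomorphism, is killed by Rosenlicht's unit theorem together with the decomposition of characters on a product and the vanishing of $d$ on the two axes; all steps check out (in particular $G\times G$ is connected, $T_H$ is split since $k$ is algebraically closed, and $c_i=1$ follows from $d_i(e,e)=1$). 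The one step you pass over quickly is why constancy of $\pi_H\circ f$ on the fibres of $\pi_G$ yields a scheme-theoretic factorization $g\circ\pi_G$: this needs descent along the $T_G$-torsor $\pi_G$ (equivalently, the universal property of the geometric quotient $A_G=G/T_G$), which applies because agreement on $k$-points of the two maps $(x,t)\mapsto \pi_H f(x+t)$ and $(x,t)\mapsto \pi_H f(x)$ forces their equality as morphisms, everything being reduced and of finite type over an algebraically closed field. What your route buys is that it avoids any analysis of boundary behaviour of forms on a compactification, the delicate point you correctly identify in Iitaka's approach; what the logarithmic-forms route buys is a statement that generalizes naturally within Iitaka's log-geometry framework. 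Either way the theorem stands.
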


When $G$ is a commutative algebraic $k$-group and $X$ is a closed subvariety of $G$, we may write
\begin{equation}\label{eqn:Kawamata}
Z(X) = Z_{\uU}(X) \cup Z_{\T}(X) \cup Z_{\A}(X)
\end{equation}
where $Z_{\uU}(X),Z_{\T}(X),$ and $Z_{\A}(X)$ are the unions of translated group subvarieties of $G$ isomorphic to $\mG_a$, $\mG_{m}$, and non-trivial abelian varieties contained in $X$, respectively. 
We will refer to $Z_{\uU}(X),Z_{\T}(X),$ and $Z_{\A}(X)$ as the \cdef{unipotent, toric,} and \cdef{abelian Kawamata locus} of $X$, respectively.

\subsection{Algebraic notions of hyperbolicity}
Next, we recall various notions of hyperbolicity in the algebraic setting and relate them to the loci described above.  

First, we define the notion of groupless and pseudo-groupless, which have appeared in works of \cite{LangDiophantine2}, the second author \cite{VojtaLangExc}, and in \cite{JKam, JavanpeykarXie:Pseudogroupless}. 

\begin{subdefinition}\label{defn:grouplessandpseudo}
Let $X$ be a $k$-variety. 
\begin{enumerate}
\item $X$ is \cdef{groupless} if for every algebraic $k$-group $G$, every $k$-morphism $G\to X$ is constant. 
\item Let $\Delta\subset X$ be a closed subscheme.  $X$ is \cdef{groupless modulo $\Delta$} if for every algebraic $k$-group and every dense open subscheme $U\subseteq G$ with $\codim(G\setminus U)\geq 2$, every non-constant $k$-morphism $U \to X$ factors over $\Delta$.
\item $X$ is \cdef{pseudo-groupless} if there exists a proper closed subscheme $\Delta\subset X$ such that $X$ is groupless modulo $\Delta$.
\end{enumerate}
\end{subdefinition}

Roughly speaking, it suffices to test being groupless and pseudo-groupless on $\mG_{m,k}$ and big open subschemes of abelian $k$-varieties. 

\begin{sublemma}\label{lemma:testgroupless}
Let $X$ be a $k$-variety, and let $\Delta\subset X$ be a closed subscheme. 
\begin{enumerate}
\item $X$ is groupless if and only if every $k$-morphism from either $\mG_{m,k}$ or an abelian $k$-variety is constant. 
\item $X$ is groupless modulo $\Delta$ if and only if every non-constant $k$-morphism $\mG_{m,k}\to X$ factors over $\Delta$ and for every abelian $k$-variety $A$ and every dense open subscheme $U \subseteq A$ with $\codim(A\setminus U)\geq 2$, every non-constant $k$-morphism $U \to X$ factors over $\Delta$. 
\end{enumerate}
\end{sublemma}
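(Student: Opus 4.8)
The plan is to prove both equivalences by the same two-step reduction: first reduce an arbitrary connected algebraic $k$-group $G$ to its linear part and its abelian quotient via Chevalley's theorem (\autoref{thm:Chevalley}), and then reduce the linear part to one-parameter subgroups, using the elementary but crucial observation that $\mG_m$ sits as a dense open subscheme of $\mG_a = \mathbb{A}^1$. In each part the forward implication is immediate, since $\mG_{m,k}$, abelian varieties, and big open subschemes of abelian varieties are themselves among the test objects allowed in the definitions of groupless and groupless modulo $\Delta$; all of the content lies in the reverse implications.

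For (1) I would argue contrapositively. Given a non-constant morphism $f\colon G \to X$, write $0 \to H \to G \to A \to 0$ with $H$ the connected linear part and $A$ abelian. If $f$ is non-constant on some fiber $gH$ of $G \to A$, then $h \mapsto f(gh)$ is a non-constant morphism $H \to X$; since a connected linear group over an algebraically closed field of characteristic zero is generated by the images of finitely many one-parameter subgroups isomorphic to $\mG_a$ or $\mG_m$, invariance of $f$ under right translation by all of these would force $f|_{gH}$ constant. Hence some translate of such a subgroup gives a non-constant morphism $\mG_a \to X$ or $\mG_m \to X$, and precomposing the $\mG_a$-case with $\mG_m \hookrightarrow \mathbb{A}^1$ (and using that $X$ is separated, so a morphism constant on a dense open is constant) produces in all cases a non-constant morphism $\mG_m \to X$. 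If instead $f$ is constant on every fiber, then $f$ is $H$-invariant and descends along the smooth surjection $G \to A = G/H$ (\autoref{thm:quotientsrepn}) to a non-constant morphism $A \to X$. Either way we contradict the hypothesis.

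For (2) I would run the same dichotomy at the generic fiber of $\pi\colon G \to A$, but now tracking the big-open condition. Let $g\colon U \to X$ be non-constant with $\codim(G\setminus U)\geq 2$, set $V = g^{-1}(X\setminus\Delta)$, and assume for contradiction that $V \neq \emptyset$, so $V$ is dense open in $U$. A short dimension count shows that for generic $a \in A$ the intersection $U \cap \pi^{-1}(a)$ is a big open subscheme of the fiber $\pi^{-1}(a)\cong H$, and that $\pi(U)$ is itself a big open of $A$ (a codimension-one part of $A\setminus \pi(U)$ would pull back to a codimension-one part of $G\setminus U$). If $g$ is non-constant on the generic fiber, I would choose a generic translate $w\cdot\gamma(\mG)$ of a one-parameter subgroup inside $U\cap\pi^{-1}(a)$: generically it avoids the codimension-$\geq 2$ complement, carries a non-constant restriction of $g$, and meets the dense open $V$, yielding (after restricting along $\mG_m\hookrightarrow\mG_a$ when needed) a non-constant morphism $\mG_m \to X$ whose image meets $X\setminus\Delta$, contradicting the hypothesis on $\mG_m$. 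If instead $g$ is constant on the generic fiber, it is $H$-invariant over a big open and descends to a non-constant morphism $\bar g\colon U_A \to X$ on a big open $U_A \subseteq A$ with $\bar g(\pi(v)) = g(v) \notin \Delta$ for $v \in V$, contradicting the hypothesis on big opens of abelian varieties.

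The main obstacle is the general-position argument in the first case of (2): one must simultaneously arrange that a translate of a one-parameter subgroup (i) lies entirely in the big open $U\cap\pi^{-1}(a)$, (ii) has $g$ non-constant along it, and (iii) meets $V$. Each is a generic condition --- (i) because a generic one-dimensional translate avoids a fixed codimension-$\geq 2$ set, (ii) by the generation argument of part (1) applied fiberwise, and (iii) because $V$ is dense --- so a common generic choice exists; but making this precise, together with the descent and big-openness bookkeeping in the second case, is the technical heart of the proof. Alternatively, one can deduce both parts from the analogous reductions for the Lang-like exceptional locus recorded in \autoref{lemma:LanglikeGmbigopenAVtest} and \cite[Lemma 3.16]{JavanpeykarXie:Pseudogroupless}.
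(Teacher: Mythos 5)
Your argument is essentially correct, but it is worth noting that the paper does not prove this lemma at all: its entire proof is a citation of \cite[Lemma 2.5]{JKam} for part (1) and \cite[Lemma 3.16]{JavanpeykarXie:Pseudogroupless} for part (2) (your closing sentence, proposing to deduce the statement from those references, is precisely the paper's route). What you have written is a self-contained reconstruction of the arguments behind those citations, and the skeleton is the standard one: Chevalley's decomposition $0\to H\to G\to A\to 0$, the fact that a connected linear $k$-group is generated by finitely many one-parameter subgroups isomorphic to $\mG_a$ or $\mG_m$, descent of a fiberwise-constant morphism along the smooth surjection $G\to G/H$, and the observation that $\mG_m\subset\mG_a$ is dense open so a non-constant morphism from $\mG_a$ restricts to a non-constant morphism from $\mG_m$. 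The two delicate points are exactly the ones you flag: (a) in part (2), arranging a translate of a one-parameter subgroup that simultaneously lies entirely inside the big open $U\cap\pi^{-1}(a)$, carries a non-constant restriction of $g$, and meets $g^{-1}(X\setminus\Delta)$ --- each condition does hold on a dense open of the translation parameter (the non-constancy locus is open by upper semicontinuity of fiber dimension, and a translate avoiding a fixed codimension-$\geq 2$ set is a generic condition), so a common choice exists; and (b) the chain/generation argument must be run on the big open of a fiber, where one should verify that the set of $h\in H$ with $g(ph)=g(p)$ for all admissible $p$ is genuinely a subgroup (composability of two such $h$ requires the intermediate point to lie in $U$, which holds for generic $p$ and then for all $p$ by separatedness of $X$). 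Neither issue is a gap, but both require the bookkeeping you acknowledge. What the citation buys the paper is brevity; what your argument buys is a proof that is visibly uniform in the two parts and makes explicit where the big-open hypothesis enters.
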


\begin{proof}
This first statement is \protect{\cite[Lemma 2.5]{JKam}, and the second is \cite[Lemma 3.16]{JavanpeykarXie:Pseudogroupless}}. 
\end{proof}

\begin{subcorollary}\label{lemma:grouptestgroup}
Let $X$ be a closed subvariety of an algebraic $k$-group, and let $\Delta\subset X$ be a closed subscheme. 
Then, $X$ is groupless modulo $\Delta$ if and only if every non-constant $k$-morphism $\mG_{m,k}\to X$ factors over $\Delta$ and for every abelian $k$-variety $A$, every non-constant $k$-morphism $A \to X$ factors over $\Delta$. 
\end{subcorollary}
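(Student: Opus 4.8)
The plan is to deduce this from the more general criterion already established in \autoref{lemma:testgroupless}(2). The only difference between the two statements is that \autoref{lemma:testgroupless}(2) tests non-constant morphisms out of \emph{big open subschemes} $U \subseteq A$ with $\codim(A\setminus U)\geq 2$, whereas the corollary tests only morphisms out of the full abelian variety $A$. So the entire content is the observation that, when $X$ is a closed subvariety of an algebraic $k$-group $G$, these two tests are equivalent; the $\mG_{m,k}$ condition is verbatim the same in both and needs no comment.

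For the forward implication I would note that the criterion of the corollary is literally a special case of \autoref{lemma:testgroupless}(2): a non-constant morphism $A \to X$ is the instance $U = A$ (whose complement is empty, hence of codimension $\geq 2$) of the big-open condition. Thus if $X$ is groupless modulo $\Delta$, the corollary's conditions hold immediately, with no further work.

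For the reverse implication, suppose every non-constant morphism $\mG_{m,k}\to X$ and every non-constant morphism $A \to X$ from an abelian variety factors over $\Delta$; I must recover the big-open condition of \autoref{lemma:testgroupless}(2). Given an abelian variety $A$, a big open $U \subseteq A$, and a non-constant morphism $f\colon U \to X$, I would compose with the closed immersion $X \hookrightarrow G$ and invoke \cite[Theorem 4.4.1]{BLR}: since $A$ is smooth and $A\setminus U$ has codimension $\geq 2$, the induced rational map $A \dashrightarrow G$ is defined in codimension 1 and therefore extends to a morphism $\overline{f}\colon A \to G$. This is the same extension mechanism used in the proof of \autoref{lemma:LanglikeGmAV}.

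The one genuinely load-bearing step — and the only place where the hypothesis that $X$ is \emph{closed} in $G$ enters — is checking that $\overline{f}$ has image in $X$ and not merely in $G$. This holds because $U$ is dense in $A$, so $\overline{f}(A) \subseteq \overline{\overline{f}(U)} = \overline{f(U)} \subseteq X$, the last inclusion because $X$ is closed. Hence $\overline{f}\colon A \to X$ is a non-constant extension of $f$; the hypothesis forces $\overline{f}(A) \subseteq \Delta$, and restricting to $U$ shows $f$ factors over $\Delta$. I do not anticipate any real obstacle here beyond correctly tracking that the extended morphism stays inside the closed subvariety — precisely the feature that makes the corollary fail for arbitrary $X$.
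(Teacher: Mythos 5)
Your proof is correct and follows the paper's own route exactly: the paper's proof is the one-line ``this is \autoref{lemma:testgroupless} combined with \cite[Theorem 4.4.1]{BLR},'' and you have simply written out the details, including the key use of closedness of $X$ in $G$ to see that the extended morphism $A \to G$ lands in $X$. No gaps.
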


\begin{proof}
This is \autoref{lemma:testgroupless} combined with \cite[Theorem 4.4.1]{BLR}. 
\end{proof}

\begin{subremark}
When $X$ is a complete $k$-variety, one can prove stronger versions of the statements in \autoref{lemma:testgroupless}. We refer the reader to \cite[Lemma 2.7]{JKam} and \cite[Remarks 3.2 \& 3.3]{JavanpeykarXie:Pseudogroupless} for details.  
\end{subremark}

When $X$ is a closed subvariety of a semi-abelian $k$-variety, we have a precise relationship between $X$ being pseudo-groupless and $X$ being of log-general type. 

\begin{subtheorem}[\protect{\cite{Ueno, Kawamata, Abram}}]\label{thm:semiabsetting}
Let $X$ be a closed subvariety of a semi-abelian $k$-variety. 
\begin{enumerate}
\item $X$ is groupless modulo $Z(X)$.
\item $X \neq Z(X)$ if and only if $X$ is of log-general type if and only if $X$ is pseudo-groupless. 
\end{enumerate}
\end{subtheorem}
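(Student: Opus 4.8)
The plan is to obtain part (1) directly from Iitaka's theorem and the testing criterion \autoref{lemma:grouptestgroup}, and then to deduce the chain of equivalences in part (2) by combining part (1) with Abramovich's closedness \autoref{thm:Kawamataclosed} and the structure theory of Ueno and Kawamata. For part (1), by \autoref{lemma:grouptestgroup} it is enough to check that every non-constant $k$-morphism $f\colon B \to X$, with $B$ equal to $\mG_{m,k}$ or to an abelian $k$-variety, factors through $Z(X)$. Composing $f$ with the inclusion $X \hookrightarrow G$ gives a morphism $B \to G$ of semi-abelian varieties (both $\mG_m$ and abelian varieties are semi-abelian), so by \autoref{thm:Iitakagrouphom} it is the translate of a group homomorphism. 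Its image is therefore a translated group subvariety of $G$; it is positive-dimensional since $f$ is non-constant, and it is contained in $X$, so it is one of the subvarieties appearing in the union defining $Z(X)$. Hence $f(B)\subseteq Z(X)$, proving (1).

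I next treat the equivalence of pseudo-grouplessness with $X\neq Z(X)$ in part (2). If $X\neq Z(X)$, then $Z(X)$ is a proper closed subscheme of $X$ by \autoref{thm:Kawamataclosed}, and part (1) says $X$ is groupless modulo $Z(X)$; hence $X$ is pseudo-groupless. Conversely, suppose $X = Z(X)$ and let $\Delta\subsetneq X$ be any proper closed subscheme. Choose $p\in X\setminus\Delta$. Since $X = Z(X)$, the point $p$ lies on some positive-dimensional translated subgroup $W\subseteq X$, and $W\not\subseteq\Delta$ because $p\notin\Delta$. Every positive-dimensional subgroup of a semi-abelian variety is itself semi-abelian and so receives a non-constant morphism from $\mG_{m,k}$ or from an abelian $k$-variety; translating it, we obtain a non-constant morphism from $\mG_{m,k}$ or an abelian $k$-variety whose image is $W$, which does not factor through $\Delta$. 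By \autoref{lemma:grouptestgroup}, $X$ is not groupless modulo $\Delta$, and as $\Delta$ was arbitrary, $X$ is not pseudo-groupless.

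The deepest equivalence is $X\neq Z(X) \Leftrightarrow X$ of log-general type, where the results of Ueno and Kawamata are essential. The approach is through the stabilizer fibration: set $B = \{g\in G : g + X = X\}^{0}$, a semi-abelian subvariety of $G$, and form the quotient $X' = X/B \subseteq G' = G/B$, which by construction has trivial stabilizer. Ueno's theorem gives $\overline{\kappa}(X) = \dim X'$, and Kawamata's structure theorem shows that $X'$, having trivial stabilizer, is of log-general type and satisfies $Z(X')\subsetneq X'$. If $B$ is non-trivial, then $X = X + B$ is a union of cosets of $B$, so $X = Z(X)$ and at the same time $\overline{\kappa}(X) = \dim X' < \dim X$, i.e.\ $X$ is not of log-general type; if $B$ is trivial, then $X = X'$ is of log-general type and $Z(X)\subsetneq X$. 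Since these two cases are exhaustive, tracing them yields $X = Z(X)\Leftrightarrow X$ is not of log-general type, which is the desired equivalence in contrapositive form. The main obstacle is precisely the assertion that a trivial stabilizer forces $Z(X)$ to be a proper subset (equivalently, that once $X$ is not translation-stable under any positive-dimensional subgroup the translated subgroups it contains cannot cover it), together with the computation $\overline{\kappa}(X) = \dim(X/B)$; these are exactly the contents of Kawamata's structure theorem and Ueno's theorem, which I would import rather than reprove.
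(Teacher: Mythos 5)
The paper does not actually prove this statement: it is quoted as a known result with a bare citation to Ueno, Kawamata, and Abramovich, so there is no in-text argument to compare yours against. Your reconstruction is essentially correct and correctly isolates what is elementary from what must be imported. Part (1) and the equivalence of $X\neq Z(X)$ with pseudo-grouplessness do follow, as you show, from \autoref{thm:Iitakagrouphom}, \autoref{thm:Kawamataclosed}, and the testing criterion \autoref{lemma:grouptestgroup}; and the equivalence with log-general type genuinely requires Ueno's formula $\overline{\kappa}(X)=\dim(X/B)$ and Kawamata's structure theorem for the trivial-stabilizer case, which is precisely the content the paper is citing, so importing them is the right call. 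One small inaccuracy in the converse direction of the pseudo-groupless equivalence: a positive-dimensional semi-abelian subgroup $H$ with $W$ a translate of $H$ receives a non-constant map from $\mG_{m,k}$ or from an abelian variety, but the image of that map need not be all of $W$ (e.g.\ $\mG_m\hookrightarrow\mG_m^2$); what you actually need, and what your choice of translate through $p\notin\Delta$ already gives you, is only that the image is not contained in $\Delta$. With that phrase corrected the argument is complete.
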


For our purposes, we will need another definition, which will be better suited for our methods. 

\begin{subdefinition}\label{defn:notfibered}
Let $X$ be a closed subvariety of a commutative algebraic $k$-group $G$. 
We say that $X$ is \cdef{fibered by subgroups} if there exists a non-trivial subgroup $H$ of $G$ such that $X$ is the pullback of a closed subset of $G/H$. Note that the quotient $G/H$ exists as an algebraic $k$-group by \autoref{thm:quotientsrepn}. 
\end{subdefinition}

%


\subsection{Non-Archimedean notions of hyperbolicity}
To conclude our discussion on hyperbolicity,  we recall the non-Archimedean notion of Brody hyperbolicity following \cite[Definition 2.3]{JVez}, \cite[Definition 2.2]{MorrowNonArchGGLV}, and \cite[Definition 4.1]{morrow_nonArchCampana}.

Recall our conventions for fields established in Subsection \ref{subsec:fields}, in particular, $K$ is assumed to be an algebraically closed, complete, non-Archimedean non-trivially valued field of characteristic zero, and for a $K$-variety $X$, $X^{\an}$ denotes the Berkovich analytification of $X$.

\begin{subdefinition}\label{defn:KBrody}
Let $X$ be a $K$-variety. 
\begin{enumerate}
\item $X$ is \cdef{$K$-analytically Brody hyperbolic} if for every algebraic $K$-group $G$, every $K$-analytic morphism $G^{\an} \to X^{\an}$ is constant. 
\item Let $\Delta\subset X$ be a closed subscheme.  $X$ is \cdef{$K$-analytically Brody hyperbolic modulo $\Delta$} if for every algebraic $K$-group $G$ and every open subspace $\sU$ of $G^{\an}$ with $\codim(G^{\an}\setminus \sU)\geq 2$, every non-constant $K$-analytic morphism $\sU \to X^{\an}$ factors over $\Delta^{\an}$. 
\item $X$ is \cdef{pseudo-$K$-analytically Brody hyperbolic} if there exists a proper closed subscheme $\Delta$ of $X$ such that $X$ is $K$-analytically Brody hyperbolic modulo $\Delta$. 
\end{enumerate}
\end{subdefinition}

Similar to the algebraic setting, we may test $K$-analytically Brody hyperbolic and pseudo-$K$-analytically Brody hyperbolic on simpler objects other than algebraic $K$-groups. 

\begin{sublemma}\label{lemma:KanBrodyequiv}
Let $X$ be a $K$-variety, and let $\Delta\subset X$ be a closed subscheme.  
\begin{enumerate}
\item $X$ is {$K$-analytically Brody hyperbolic} if and only if every $K$-analytic morphism $\mathbb{G}_{m,K}^{\an} \to X^{\an}$ is constant, and for every abelian $K$-variety $A$, every algebraic $K$-morphism $A\to X$ is constant.
\item $X$ is {$K$-analytically Brody hyperbolic modulo $\Delta$} if and only if every non-constant $K$-analytic morphism $\mathbb{G}_{m,K}^{\an} \to X^{\an}$ factors over $\Delta^{\an}$, and for every abelian $K$-variety $A/K$, every dense open $\sU\subseteq A^{\an}$ such that $\codim(A^{\an}\setminus \sU) \geq 2$, every non-constant $K$-analytic morphism $\sU\to X^{\an}$ factors over $\Delta^{\an}$.
\end{enumerate}
\end{sublemma}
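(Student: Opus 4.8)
The plan is to mirror the proof of the algebraic analogue \autoref{lemma:testgroupless}: I reduce the class of test objects from arbitrary algebraic $K$-groups to $\mG_m$ and abelian varieties by means of the Chevalley decomposition (\autoref{thm:Chevalley}), handling the additive directions analytically by a density argument and the abelian directions by descent. The ``only if'' direction of both parts is immediate from \autoref{defn:KBrody}: $\mG_m$ is itself an algebraic $K$-group, so the definition applies to $\mG_m^{\an}$ directly, and any algebraic morphism $A\to X$ analytifies to $A^{\an}\to X^{\an}$, which is again covered (with the abelian condition of part~(2) being literally the restriction of the definition to $G=A$). So the content is the ``if'' direction, where I must show that constancy (resp.\ factoring through $\Delta^{\an}$) on $\mG_m$ and abelian varieties forces the same for an arbitrary $K$-analytic map $f\colon G^{\an}\to X^{\an}$ (resp.\ $f\colon\sV\to X^{\an}$ on a big open $\sV\subseteq G^{\an}$).

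First I would treat the linear part. Writing $0\to H\to G\to A\to 0$ for the Chevalley decomposition, every homomorphism $\mG_a\to G$ or $\mG_m\to G$ factors through $H$ (its composite to the abelian variety $A$ is trivial), and $H$ is generated as a group by the images of such one-parameter subgroups; in the commutative case $H\cong\mG_a^\alpha\times\mG_m^\mu$ this is immediate. The key non-Archimedean input is that a $K$-analytic map out of $\mG_a^{\an}=\mathbb{A}^{1,\an}$ is already controlled by the $\mG_m$-hypothesis: since $\mathbb{A}^{1,\an}\setminus\mG_m^{\an}$ is the single point $0$, which lies in the closure of $\mG_m^{\an}$, and $X^{\an}$ is Hausdorff, any analytic map $\mathbb{A}^{1,\an}\to X^{\an}$ that is constant (resp.\ has image in the closed set $\Delta^{\an}$) on $\mG_m^{\an}$ does so on all of $\mathbb{A}^{1,\an}$. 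Restricting $f$ to translates of one-parameter subgroups and chaining $e=\gamma_0,\gamma_1,\dots,\gamma_r=h$ through generators of $H(K)$ then shows that $f$ is constant on each $H^{\an}$-coset lying over a $K$-point of $A$; for part~(2) the same chaining applies with ``is constant'' replaced by ``has image in $\Delta^{\an}$'', on those cosets meeting $\sV$ in a big open subset.

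Next I would descend. Having shown $f$ is invariant under the $H^{\an}$-action on the rigid fibres, I would use that $\pi^{\an}\colon G^{\an}\to A^{\an}$ realizes $A^{\an}$ as the analytic quotient of $G^{\an}$ by $H^{\an}$ to upgrade this to constancy on every analytic fibre and to produce a $K$-analytic morphism $\bar f\colon A^{\an}\to X^{\an}$ with $f=\bar f\circ\pi^{\an}$. In the commutative case this is concrete: $H\cong\mG_a^\alpha\times\mG_m^\mu$ is a special group, so the extension is Zariski-locally trivial, and over a Zariski cover $V^{\an}\times H^{\an}$ one checks $f$ is constant on each slice $\{v\}\times H^{\an}$ (the rigid slices suffice since the diagonal of $X^{\an}$ is closed) and glues the resulting $\bar f$. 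For part~(1) I would then invoke Berkovich GAGA: after choosing a proper compactification $\overline X$ of $X$, the morphism $A^{\an}\to\overline X^{\an}$ is algebraic by \cite[Corollary 3.4.10]{BerkovichSpectral}, say $\bar g\colon A\to\overline X$, and since $\bar f$ carries every closed point of $A$ into $X^{\an}$, the morphism $\bar g$ factors through $X$; the abelian hypothesis then forces $\bar g$, hence $\bar f$, hence $f$, to be constant. For part~(2) no GAGA is needed: the descended $\bar f$ is already a $K$-analytic morphism from a big open of $A^{\an}$, to which the abelian hypothesis applies directly, yielding that $f$ factors through $\Delta^{\an}$.

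The main obstacle will be the descent step together with the codimension bookkeeping in part~(2). Promoting the fibrewise constancy of $f$ to a genuine $K$-analytic morphism on $A^{\an}$ (rather than a map on $K$-points only) requires that $\pi^{\an}$ be an analytic quotient by $H^{\an}$; in the commutative case this is clean via Zariski-local triviality of special-group torsors, whereas the general non-commutative case would need analytic descent along the smooth surjection $\pi^{\an}$. In part~(2), the additional care lies in verifying that the one-parameter-subgroup cosets and the fibres of $\pi^{\an}$ meet the big open $\sV$ in subsets whose complements have codimension $\geq 2$, so that the hypotheses on $\mG_m$ and on big opens of $A^{\an}$ genuinely apply; this is where I would lean on codimension reductions in the spirit of \autoref{lemma:LanglikeGmbigopenAVtest}.
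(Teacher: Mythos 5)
The paper does not reprove this lemma: its ``proof'' is a citation to \cite[Lemmas 2.14 \& 2.15]{JVez} and \cite[Theorem 4.4]{morrow_nonArchCampana}, so you are reconstructing an argument the paper outsources. Your architecture (Chevalley decomposition, density of $\mG_m^{\an}$ in $\mA^{1,\an}$ to control the additive directions, generation of the linear part by one-parameter subgroups, descent to the abelian quotient, GAGA after compactifying $X$) is the right one and matches the strategy of the cited proofs. The easy direction, the handling of $\mG_a$ via the single missing rigid point, the chaining over $H(K)$ upgraded to all of $G^{\an}\times H^{\an}$ by density of rigid points and separatedness of $X^{\an}$, and the algebraization step for $\bar f\colon A^{\an}\to\overline{X}^{\an}$ are all fine.

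There are, however, two genuine gaps. First, the lemma quantifies over \emph{all} algebraic $K$-groups $G$, but your descent of $f$ along $\pi^{\an}\colon G^{\an}\to A^{\an}$ is only carried out when $H$ is special (so that $G\to A$ is Zariski-locally trivial); you flag the non-commutative case as ``needing analytic descent along the smooth surjection'' but do not supply it. This is not cosmetic: a general connected linear $H$ is not special, smooth surjections of Berkovich spaces are only guaranteed local sections near rigid points of the source (and neighborhoods of rigid points of $A^{\an}$ need not cover $A^{\an}$), so one must either invoke flat/smooth descent of morphisms in the analytic category or find another route. Second, in part (2) the logic of the ``pseudo'' statement is incomplete. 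Your chaining shows that if $f$ is non-constant along some one-parameter coset then \emph{that coset} maps into $\Delta^{\an}$, but the conclusion requires that all of $f(\sV)$ lies in $\Delta^{\an}$. The missing step is a propagation argument: the locus of points through which $f$ is non-constant along a fixed one-parameter direction is open (it is the image under an open projection of the complement of a closed equalizer), hence dense if non-empty, and it maps into the closed set $\Delta^{\an}$; if all such loci are empty, $f$ descends to a big open of $A^{\an}$ and the abelian hypothesis applies. Without this dichotomy, and without the promised codimension bookkeeping verifying which cosets and fibers actually lie in $\sV$, part (2) is not proved.
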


\begin{proof}
This is \cite[Lemma 2.14 \& Lemma 2.15]{JVez} and \cite[Theorem 4.4]{morrow_nonArchCampana}.  
\end{proof}

Similar to the algebraic setting, we have a precise relationship to $X$ being pseudo-groupless and $X$ being pseudo-$K$-analytically Brody hyperbolic in the setting where $X$ is a closed subvariety of a semi-abelian $K$-variety. 

\begin{subtheorem}[\protect{\cite{Abram, Ueno, Kawamata, Cherry, MorrowNonArchGGLV}}]\label{thm:semiabsettingNA}
Let $X$ be a closed subvariety of a semi-abelian $K$-variety. 
\begin{enumerate}
\item $X$ is groupless modulo $Z(X)$.
\item $X \neq Z(X)$ if and only if $X$ is of log-general type if and only if $X$ is pseudo-groupless if and only if $X$ is $K$-analytically Brody hyperbolic. 
\end{enumerate}
\end{subtheorem}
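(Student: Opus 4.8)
The plan is to read this as the non-Archimedean enhancement of the purely algebraic \autoref{thm:semiabsetting}: every assertion not mentioning non-Archimedean hyperbolicity is already contained there. Part~(1) is \emph{verbatim} \autoref{thm:semiabsetting}(1), and the equivalences among $X\neq Z(X)$, being of log-general type, and being pseudo-groupless are exactly \autoref{thm:semiabsetting}(2). I would therefore reduce the whole statement to the analytic analogue of part~(1), namely that $X$ is $K$-analytically Brody hyperbolic modulo $Z(X)$; together with the algebraic equivalences this produces the final link to (pseudo-)$K$-analytic Brody hyperbolicity. By \autoref{lemma:KanBrodyequiv} the condition ``Brody hyperbolic modulo $Z(X)$'' can be tested only on $\mathbb{G}_{m,K}^{\an}$ and on big open subschemes $\sU$ of abelian $K$-varieties, so it suffices to compare algebraic and analytic maps out of these two kinds of source against the Kawamata locus (\autoref{defn:Kawamatalocus}).

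One implication is formal. Analytification is faithful and sends the algebraic Kawamata locus $Z(X)$ to its analytification $Z(X)^{\an}$, and every algebraic morphism out of $\mathbb{G}_{m,K}$ or a big open of an abelian variety analytifies to an admissible analytic test map. Hence if all analytic test maps factor through $Z(X)^{\an}$, then by GAGA the algebraic ones factor through $Z(X)$; that is, analytic hyperbolicity modulo $Z(X)$ implies grouplessness modulo $Z(X)$.

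The substantive implication is the converse. Assuming $X$ is groupless modulo $Z(X)$, I would show every non-constant analytic test map factors through $Z(X)^{\an}$, and the key input is non-Archimedean rigidity of entire curves into the ambient semi-abelian variety $G$. Given a non-constant $f\colon\mathbb{G}_{m,K}^{\an}\to X^{\an}\subseteq G^{\an}$, \autoref{lemma:analyticlosure} shows that the analytic Zariski closure of $f(\mathbb{G}_{m,K}^{\an})$ is an analytic subgroup of $G^{\an}$; invoking Cherry's study \cite{Cherry} of maps into semi-abelian varieties, this closure is a translate $g+B$ of the analytification of an \emph{algebraic} subgroup $B\subseteq G$, and $f$ factors through it. As $g+B$ is the Zariski closure of the image inside $X$, it is a translated group subvariety contained in $X$, hence lies in $Z(X)$, so $f$ factors through $Z(X)^{\an}$. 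For a non-constant $\sU\to X^{\an}$ with $\sU\subseteq A^{\an}$ and $\codim(A^{\an}\setminus\sU)\geq 2$, I would first extend across the codimension-$\geq 2$ boundary into the good space $G^{\an}$ by a Hartogs-type argument, then apply the same rigidity together with \autoref{thm:Iitakagrouphom} to see that the extension $A^{\an}\to G^{\an}$ is the analytification of a translate of a group homomorphism; this returns us to the algebraic case \autoref{thm:semiabsetting}(1). Assembling these over both classes of sources and reapplying \autoref{lemma:KanBrodyequiv} yields the factorization, so the four conditions coincide; this synthesis is what is carried out in \cite{MorrowNonArchGGLV}.

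The main obstacle is precisely the rigidity step for the torus factor: that the analytic Zariski closure of an entire curve $\mathbb{G}_{m,K}^{\an}\to G^{\an}$ is a translated \emph{algebraic} subgroup. A priori, analytic maps out of $\mathbb{G}_{m,K}^{\an}$ are vastly more flexible than algebraic ones, and there is no formal reason their images should be algebraic; the control comes from genuinely non-Archimedean function theory on $G^{\an}$ and its topological universal cover \cite{BerkovichUniversalCover}, as developed by Cherry \cite{Cherry}. Once this is granted, the remainder is bookkeeping transporting the algebraic structure theory of \cite{Ueno,Kawamata,Abram} across analytification.
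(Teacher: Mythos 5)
Your proposal is correct and matches the intended argument: the paper does not prove this statement but cites it (the algebraic equivalences are \autoref{thm:semiabsetting} from \cite{Ueno,Kawamata,Abram}, and the analytic link is established in \cite{Cherry} and \cite{MorrowNonArchGGLV} exactly by the route you describe --- testing via \autoref{lemma:KanBrodyequiv}, Hartogs extension over codimension-two boundaries, and Cherry-type rigidity showing the analytic Zariski closure of an entire curve in a semi-abelian variety is a translated algebraic subgroup). This is also precisely the strategy the paper itself carries out in Section \ref{sec:NAGGLV} for the harder unipotent rank 1 case (\autoref{prop:extendinganalytic} and \autoref{thm:ZCalgebraicgroup}). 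One small point: the final clause of part (2) should be read as \emph{pseudo}-$K$-analytic Brody hyperbolicity (as your reduction implicitly and correctly does), since $X\neq Z(X)$ cannot imply Brody hyperbolicity outright when $Z(X)$ is nonempty.
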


\autoref{thm:semiabsettingNA} provides motivation for the non-Archimedean versions of the Green--Griffiths--Lang--Vojta conjectures.

\begin{subconjecture}[Stong form of non-Archimedean Green--Griffiths--Lang--Vojta conjecture]
\label{conj:strongNAGGLV}
Let $X$ be a $K$-variety. 
The following are equivalent:
\begin{enumerate}
\item $X$ is of log-general type,
\item $X$ is pseudo-groupless (\autoref{defn:grouplessandpseudo}.(3)), and
\item $X$ is pseudo-$K$-analytically Brody hyperbolic (\autoref{defn:KBrody}.(3)).
\end{enumerate}
\end{subconjecture}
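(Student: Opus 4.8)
The plan is to prove the three-fold equivalence by splitting it into the \emph{algebraic} comparison $(1)\Leftrightarrow(2)$ and the \emph{analytic} comparison $(2)\Leftrightarrow(3)$, and within each to isolate the implication carrying the real content. I would begin with the one implication that holds unconditionally, $(3)\Rightarrow(2)$. Suppose $X$ is $K$-analytically Brody hyperbolic modulo a proper closed $\Delta$. Given any algebraic $K$-group $G$, any dense open $U\subseteq G$ with $\codim(G\setminus U)\geq 2$, and any non-constant morphism $U\to X$, analytification yields a non-constant $K$-analytic morphism $U^{\an}\to X^{\an}$ with $U^{\an}$ a dense open of $G^{\an}$ of the same codimension type; by hypothesis it factors through $\Delta^{\an}$, and since $\Delta$ is Zariski closed the original morphism factors through $\Delta$ on underlying points. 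Hence $X$ is groupless modulo $\Delta$, so pseudo-groupless. This direction uses only analytification of morphisms from algebraic groups and the faithfulness of analytification on images.

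For the analytic direction $(2)\Rightarrow(3)$, I would invoke \autoref{lemma:KanBrodyequiv} to reduce $K$-analytic Brody hyperbolicity modulo $\Delta$ to two test classes: non-constant $K$-analytic maps $\mathbb{G}_{m,K}^{\an}\to X^{\an}$, and non-constant $K$-analytic maps from dense opens $\sU\subseteq A^{\an}$ with $\codim(A^{\an}\setminus\sU)\geq 2$ for abelian $K$-varieties $A$. The goal is then to show that, once $X$ is groupless modulo some $\Delta$, every such analytic map factors through $\Delta^{\an}$. The mechanism is non-Archimedean rigidity: one argues that the analytic Zariski closure (\autoref{lemma:analyticlosure}) of the image of such a map coincides with the image of a \emph{non-constant algebraic} map from $\mathbb{G}_m$ or an abelian variety, which by grouplessness modulo $\Delta$ must lie in $\Delta$. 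When $X$ is a closed subvariety of a commutative algebraic group this rigidity is precisely \autoref{thm:ZCalgebraicgroup} — the analytic Zariski closure of a non-Archimedean entire curve is a translate of an algebraic subgroup — which is the engine behind \autoref{thmx:main2}; the program is to promote this rigidity from the embedded setting to arbitrary $X$.

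The algebraic equivalence $(1)\Leftrightarrow(2)$ is independent of the valuation and is the classical Green--Griffiths--Lang comparison between being of log-general type and being pseudo-groupless. I would attack it through the stratification philosophy of \autoref{thm:semiabsetting}: show that outside the locus swept out by images of algebraic groups the log-canonical class $K_{\overline{X}}+D$ is big, and conversely that failure of log-general type produces non-constant maps from algebraic groups. For closed subvarieties of semi-abelian varieties this is \autoref{thm:semiabsetting}, and \autoref{thmx:main0} together with \autoref{thmx:main1} extend the crucial identification (Lang-like exceptional locus $=$ Kawamata locus, with properness under ``not fibered by subgroups'') to unipotent rank $1$, toric rank $0$; these supply both the template and the first unconditional instances of the full equivalence.

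\textbf{The main obstacle.} The two implications carrying genuine content, $(1)\Rightarrow(2)$ and $(2)\Rightarrow(3)$, are where the difficulty concentrates for an \emph{arbitrary} $K$-variety, since without an ambient algebraic group one cannot invoke the exceptional-locus calculus of \autoref{thmx:main0}--\autoref{thmx:main1} nor the rigidity theorem \autoref{thm:ZCalgebraicgroup} directly. The single hardest point is $(2)\Rightarrow(3)$ in the absence of an embedding: it demands a general non-Archimedean analogue of the Bloch--Ochiai theorem, forcing the analytic Zariski closure of an \emph{arbitrary} non-Archimedean entire curve in $X^{\an}$ to be degenerate enough to meet only the groupless-exceptional stratum, and no such statement is presently available in the required generality. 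My plan is therefore to first establish a structural rigidity theorem for non-Archimedean entire curves that specializes to \autoref{thm:ZCalgebraicgroup} under an embedding into a commutative group, and to pair it with progress on $(1)\Leftrightarrow(2)$ beyond the semi-abelian and unipotent-rank-$1$ cases; until that general rigidity input is in hand, the equivalence remains conjectural for $X$ without such structure, while the commutative-group results of this paper furnish the first confirmed cases.
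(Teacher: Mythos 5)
The statement you were asked to prove is \autoref{conj:strongNAGGLV}, which the paper itself presents as an \emph{open conjecture} and nowhere proves; there is no ``paper's own proof'' to match. Your proposal is correctly calibrated on this point: you refuse to claim a full proof, and the one implication you do establish outright, $(3)\Rightarrow(2)$, is correct and essentially formal --- analytify the algebraic map $U\to X$, apply the modulo-$\Delta$ hypothesis of \autoref{defn:KBrody}, and use that $U$ is reduced and $\Delta$ Zariski closed to descend the factorization. Your identification of where the content lives also matches the paper's architecture: the paper's confirmed cases of the equivalence $(2)\Leftrightarrow(3)$ are exactly for closed subvarieties of commutative algebraic $K$-groups of unipotent rank $1$, obtained by the reduction of \autoref{lemma:KanBrodyequiv} to test maps from $\mathbb{G}_{m,K}^{\an}$ and big opens of abelian varieties, the rigidity theorem \autoref{thm:ZCalgebraicgroup} on analytic Zariski closures of entire curves, and the extension result \autoref{prop:extendinganalytic}; this is the mechanism you describe, and \autoref{thmx:main2} together with \autoref{corox:main1} is precisely how the paper packages it.

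One substantive correction to your framing: your claim that \autoref{thmx:main0} and \autoref{thmx:main1} supply ``the first unconditional instances of the \emph{full} equivalence'' overstates what the paper proves. Those results concern only items $(2)$ and $(3)$: ``not fibered by subgroups'' implies pseudo-groupless (\autoref{corox:main4}) and pseudo-$K$-analytic Brody hyperbolicity (\autoref{corox:main0}), but item $(1)$ --- log-general type --- is never established in the unipotent rank $1$ setting; the link between pseudo-grouplessness and log-general type is known only in the semi-abelian case (\autoref{thm:semiabsetting}, due to Ueno, Kawamata, and Abramovich), and for general $X$ the comparison $(1)\Leftrightarrow(2)$ remains as open as the analytic comparison. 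Your ``main obstacle'' paragraph is accurate: absent an ambient commutative group, no non-Archimedean Bloch--Ochiai-type rigidity statement is available, so $(2)\Rightarrow(3)$ cannot currently be attacked by the closure-of-entire-curves method, and your program, like the paper, leaves the conjecture open outside the embedded setting.
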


\begin{subconjecture}[Weak form of non-Archimedean Green--Griffiths--Lang--Vojta conjecture]
\label{conj:weakNAGGLV}
Let $X$ be a $K$-variety. 
The following are equivalent:
\begin{enumerate}
\item Every integral subvariety of $X$ is of log-general type,
\item $X$ is groupless (\autoref{defn:grouplessandpseudo}.(1)), and
\item $X$ is $K$-analytically Brody hyperbolic (\autoref{defn:KBrody}.(1)).
\end{enumerate}
\end{subconjecture}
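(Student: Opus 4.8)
The plan is to treat grouplessness, condition $(2)$, as the central hub and to establish the weak form as the two separate equivalences $(1)\Leftrightarrow(2)$ and $(2)\Leftrightarrow(3)$. The two implications \emph{into} $(2)$ are the formal ones. For $(1)\Rightarrow(2)$ I would argue by contraposition: if $X$ is not groupless, then by \autoref{lemma:testgroupless} there is a non-constant $K$-morphism to $X$ from either $\Gm$ or an abelian $K$-variety, and the Zariski closure $Y$ of its image is a positive-dimensional integral subvariety dominated by one of these groups. Since $\overline{\kappa}(\Gm)=\overline{\kappa}(A)=0$ and the logarithmic Kodaira dimension does not increase under dominant morphisms (injectivity of pullback on log-pluricanonical sections, computed on smooth compactifications), we get $\overline{\kappa}(Y)\le 0<\dim Y$, so $Y$ is not of log-general type, contradicting $(1)$. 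For $(3)\Rightarrow(2)$ the point is simply that analytification detects constancy: a non-constant algebraic morphism $G\to X$ from an algebraic $K$-group would analytify, via Berkovich GAGA, to a non-constant $K$-analytic morphism $G^{\an}\to X^{\an}$, contradicting $K$-analytic Brody hyperbolicity.

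The reverse implications are where the genuine content lies, and for general $K$-varieties both remain conjectural; my proposal is to prove each only in the structured settings the paper controls. For $(2)\Rightarrow(3)$, \autoref{lemma:KanBrodyequiv} reduces the analytic statement to ruling out non-constant $K$-analytic maps $\Gm^{\an}\to X^{\an}$ and non-constant analytic maps from big opens of abelian varieties. The strategy is exactly the one executed in the body of the paper through \autoref{thmx:main2}: Cherry's description of the analytic Zariski closure of a subgroup (\autoref{lemma:analyticlosure}) forces the image of a non-Archimedean entire curve into a translate of an algebraic subgroup, after which the analytic hypothesis collapses onto the algebraic grouplessness of $X$. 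This is what \autoref{corox:main2} records in the setting of commutative algebraic $K$-groups of unipotent rank $1$.

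The main obstacle, and the reason the statement is a conjecture rather than a theorem, is the implication $(2)\Rightarrow(1)$: that grouplessness of $X$ forces \emph{every} integral subvariety to be of log-general type. Proving this amounts to showing that a subvariety $Y\subseteq X$ which fails to be of log-general type must admit a non-constant map from some algebraic group, yielding the forbidden map into $X$; this is precisely the Green--Griffiths--Lang classification of non-general-type varieties and is unavailable for general $X$. I would therefore not attempt $(2)\Rightarrow(1)$ in full generality but instead establish it unconditionally where structure theory is available: when $X$ lies in a semi-abelian $K$-variety it is \autoref{thm:semiabsettingNA}, and the new cases treated here follow by combining the algebraic results \autoref{thmx:main0} and \autoref{thmx:main1} with the analytic \autoref{thmx:main2}. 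Thus the anticipated difficulty is not any isolated computation but the need for a classification of non-general-type subvarieties that lies beyond present techniques outside these group-theoretic settings.
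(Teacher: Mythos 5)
The statement you were asked about is stated in the paper as a \emph{conjecture} (\autoref{conj:weakNAGGLV}); the paper contains no proof of it, so there is no argument of the authors' to compare yours against. Your proposal is, in substance, an accurate map of what is and is not provable. The two formal implications are handled correctly: $(1)\Rightarrow(2)$ by contraposition via $\overline{\kappa}(\Gm)=\overline{\kappa}(A)=0$ and the non-increase of logarithmic Kodaira dimension under dominant morphisms, and $(3)\Rightarrow(2)$ by faithfulness of analytification. You also correctly locate the genuine content in $(2)\Rightarrow(3)$ and $(2)\Rightarrow(1)$ and correctly identify that the paper establishes $(2)\Leftrightarrow(3)$ only for closed subvarieties of commutative algebraic $K$-groups of unipotent rank $1$ (\autoref{corox:main2}, resting on \autoref{thm:ZCalgebraicgroup}), and the full equivalence only in the semi-abelian case (\autoref{thm:semiabsettingNA}).

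One correction: your closing claim that the ``new cases'' of $(2)\Rightarrow(1)$ follow from \autoref{thmx:main0}, \autoref{thmx:main1}, and \autoref{thmx:main2} overstates what those results give. They concern the Kawamata and Lang-like exceptional loci, grouplessness, and Brody hyperbolicity; none of them says anything about logarithmic Kodaira dimension, so condition $(1)$ is not established by the paper for subvarieties of unipotent-rank-$1$ groups, even under the ``not fibered by subgroups'' hypothesis. The link between the Kawamata locus and log-general type that exists in the semi-abelian setting (via Ueno--Kawamata--Abramovich, \autoref{thm:semiabsetting}) has no proved analogue here, and that gap is precisely why $(1)$ remains conjectural in the paper's setting as well. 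Also, your one-line summary of $(2)\Rightarrow(3)$ attributes the containment of the entire curve's closure in a translated subgroup to Cherry's \autoref{lemma:analyticlosure} alone; in the paper that lemma is only one ingredient, the bulk of the work being the uniformization and torsor arguments of \autoref{lemma:LiftImageContained} through \autoref{lemma:closuredensegroup}.
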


\begin{subremark}
There are conjectural, complex analytic characterizations of being log-general type and pseudo-groupless, which we have omitted from our formulation of \autoref{conj:strongNAGGLV} and \autoref{conj:weakNAGGLV}. 
We refer the reader to \cite{JBook} for details on these notions. 
\end{subremark}

\section{\bf The Kawamata locus and the Lang exceptional locus}
\label{sec:proofThm0}
In this section, we prove our main algebraic results, namely \autoref{thmx:main0} and \autoref{thmx:main1}. 

\subsection{Proof of \autoref{thmx:main0}}
The proof of \autoref{thmx:main0} will follow from three results concerning each part of the decomposition of the Kawamata locus \eqref{eqn:Kawamata}.  
While \autoref{thm:Kawamataclosed} tells us that the Kawamata locus is closed, we will need to analyze the structure of the Kawamata locus further to understand when it is a \textit{proper} closed subscheme of $X$.

\begin{sublemma}\label{lemma:unipotentKLclosed}
Let $G$ be a commutative algebraic $k$-group with unipotent rank 1, and let $X$ be a closed subvariety of $G$. 
The unipotent Kawamata locus $Z_{\uU}(X)$ of $X$ is closed, and in particular, if $X$ is not fibered by subgroups, then $Z_{\uU}(X)$ is a proper closed subscheme of $X$. 
\end{sublemma}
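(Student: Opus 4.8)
The plan is to reduce the whole statement to a property of the quotient map $q\colon G \to G' := G/\mG_a$, and then prove that property using a fiberwise completion together with upper semicontinuity of fiber dimension. First I would record that, because $G$ has unipotent rank $1$, the canonical copy of $\mG_a \subseteq G$ is the \emph{only} subgroup of $G$ isomorphic to $\mG_a$: any homomorphism $\mG_a \to A$ is trivial (as $A$ is proper and $\mG_a$ is affine), and any homomorphism $\mG_a \to \mG_m$ is trivial (as recorded in the proof of \autoref{lemma:productoverabelian}), so every $\mG_a$-subgroup lands in the $\mG_a$-factor of the linear part and hence equals it. Consequently every translated subgroup of $G$ isomorphic to $\mG_a$ is a coset $g + \mG_a$, i.e.\ a fiber of $q$, and I obtain the clean description
\[
Z_{\uU}(X) = q^{-1}(W), \qquad W := \{\,\bar g \in G' : q^{-1}(\bar g) \subseteq X\,\},
\]
where $G/\mG_a$ exists as an algebraic $k$-group by \autoref{thm:quotientsrepn}. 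The lemma then reduces to showing that $W$ is closed in $G'$.

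To prove $W$ closed, I would complete $q$ fiberwise. Since $q\colon G \to G'$ is a $\mG_a$-torsor and $\mG_a$ acts on $\mP^1$ by translations fixing $\infty$, the associated bundle $\overline{G} := G \times^{\mG_a} \mP^1$ is a $\mP^1$-bundle $\bar q\colon \overline{G} \to G'$ — this is the relative version of Serre's construction recalled in Subsection \ref{subsec:equivariantcompletion} — in which $G$ is the complement of the section at infinity, so each fiber $\mA^1_{\bar g} = q^{-1}(\bar g)$ sits inside $\mP^1_{\bar g}$. Let $\overline{X}$ be the closure of $X$ in $\overline{G}$; since $G$ is open in $\overline{G}$ and $X$ is closed in $G$, one has $\overline{X} \cap G = X$. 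The key translation is that $q^{-1}(\bar g) \subseteq X$ if and only if the fiber of $f := \bar q|_{\overline{X}}$ over $\bar g$ has dimension $1$: if $\mA^1_{\bar g} \subseteq X$ then $\overline{X} \cap \mP^1_{\bar g}$ is closed and contains the dense subset $\mA^1_{\bar g}$, hence equals $\mP^1_{\bar g}$, while if $\mA^1_{\bar g} \not\subseteq X$ then $\overline{X} \cap \mA^1_{\bar g} = X \cap \mA^1_{\bar g}$ is a proper, hence finite, subset of $\mA^1_{\bar g}$, so the fiber is $0$-dimensional. Now $f$ is proper, being the restriction of the projective morphism $\bar q$, so by upper semicontinuity of fiber dimension the locus $\{x \in \overline{X} : \dim_x f^{-1}(f(x)) \geq 1\}$ is closed; its image under the closed map $f$ is exactly $W$. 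Thus $W$ is closed, and $Z_{\uU}(X) = q^{-1}(W)$ is closed in $G$.

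For the final assertion I would argue by contraposition: if $Z_{\uU}(X) = X$, then $X = q^{-1}(W)$ with $W$ a closed subset of $G/\mG_a$, which is precisely the statement that $X$ is the pullback of a closed subset of $G/H$ for the non-trivial subgroup $H = \mG_a$, i.e.\ that $X$ is fibered by subgroups (\autoref{defn:notfibered}). Hence if $X$ is not fibered by subgroups then $Z_{\uU}(X) \neq X$, and since $Z_{\uU}(X)$ is closed it is a proper closed subscheme of $X$.

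The main obstacle is the closedness of $W$: the naive morphism $q|_X\colon X \to G'$ is not proper, so the image of the relevant dimension locus need not be closed, and it is exactly the fiberwise completion to the $\mP^1$-bundle $\overline{G}$ that makes the upper-semicontinuity-plus-properness argument apply. The remaining points (the uniqueness of the $\mG_a$-subgroup and the elementary identity $\overline{X} \cap G = X$) are routine.
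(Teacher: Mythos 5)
Your proposal is correct, and the reduction to showing that $W=\{\bar g\in G/\mG_a : q^{-1}(\bar g)\subseteq X\}$ is closed (together with the identification $Z_{\uU}(X)=q^{-1}(W)$, justified by the fact that $\mG_a$ admits no non-constant homomorphism to $A$ or to $\mG_m$) matches the paper's strategy, as does the contrapositive treatment of the final assertion. Where you diverge is in proving that $W$ is closed: you pass to the fiberwise completion $\overline{G}=G\times^{\mG_a}\mP^1$, take the closure $\overline{X}$, observe that the fiber of $\bar q|_{\overline{X}}$ over $\bar g$ is $1$-dimensional exactly when $\mA^1_{\bar g}\subseteq X$, and then combine upper semicontinuity of fiber dimension with properness of $\bar q|_{\overline{X}}$. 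The paper instead notes that $q$ is smooth (faithfully flat with smooth fibers), hence open, so $q(G\setminus X)$ is open and $W$ is its complement --- a one-line argument requiring no completion and no semicontinuity. Both are valid; the paper's route is the more economical one here, while yours is heavier but would adapt to situations where one genuinely needs to detect which fibers lie entirely in $X$ via dimension counts (e.g.\ higher-dimensional fibers), and it has the incidental benefit of exhibiting $W$ as the image of an explicit closed locus rather than as a complement.
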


\begin{proof}
First, note that the quotient map $\pi\colon G \to G/\mG_{a}$ is smooth. 
Indeed, the quotient map is faithfully flat with smooth fibers and hence smooth by  \cite[\href{https://stacks.math.columbia.edu/tag/01V8}{Tag 01V8}]{stacks-project}. 
Since $\pi$ is smooth, it is open, and therefore, $\pi(G\setminus X)$ is open in $G/\mG_{a}$. 
As every morphism $\mG_a \to G/\mG_a$ is constant, we have that $Z_{\uU}(X)$ is equal to $\pi^{-1}((G/\mG_{a})\setminus \pi(G\setminus X))$, and hence is a closed subset of $X$.

The latter claim follows from the definition of not fibered by subgroups (\autoref{defn:notfibered}). 
\end{proof}

\begin{sublemma}\label{lemma:toricKawamata}
Let $G$ be a commutative algebraic $k$-group with unipotent rank 1 and toric rank 0, and let $X$ be a closed subvariety of $G$. 
The toric Kawamata locus $Z_{\T}(X)$ of $X$ is empty, and the toric exceptional locus $\Exc'_{\T}(X)$ is equal to the unipotent Kawamata locus $Z_{\uU}(X)$ of $X$. 
\end{sublemma}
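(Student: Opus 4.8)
The plan is to use the short exact sequence $0 \to \mG_a \to G \to A \to 0$ attached to $G$ (\autoref{corollary:structurecommutative}), writing $\pi \colon G \to A$ for the quotient map, whose fibers are exactly the cosets of $\mG_a$. First I would dispose of $Z_{\T}(X)$ by showing that $G$ contains no subgroup isomorphic to $\mG_m$ at all. Given a hypothetical $H \cong \mG_m$ inside $G$, the restriction $\pi|_H \colon \mG_m \to A$ is a homomorphism from an affine group to an abelian variety and hence trivial, so $H \subseteq \ker \pi = \mG_a$; but $\Hom(\mG_m,\mG_a) = 0$ (recorded in the proof of \autoref{lemma:productoverabelian}), forcing $H$ to be trivial. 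Since $Z_{\T}(X)$ is by definition the union of translated $\mG_m$-subgroups of $G$ contained in $X$, it is therefore empty.

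The core of the lemma is the equality $\Exc'_{\T}(X) = Z_{\uU}(X)$, and here the main step is to analyze an arbitrary non-constant morphism $f \colon \mG_m \to X \subseteq G$. Composing with $\pi$ yields a morphism $\mG_m \to A$, which must be constant: any morphism from $\mG_m$ to an abelian variety is constant, as $A$ contains no rational curves. Thus $f(\mG_m)$ lands in a single fiber of $\pi$, i.e. in a coset $g + \mG_a \cong \mA^1$. The key observation is then that a non-constant morphism $\mG_m \to \mA^1$ is a non-constant Laurent polynomial, so its image is infinite and hence Zariski dense, and its closure is the full line $g + \mG_a$. As $X$ is closed and contains $f(\mG_m)$, it contains $\overline{f(\mG_m)} = g + \mG_a$, so this coset is a translated $\mG_a$-subgroup inside $X$ and thus lies in $Z_{\uU}(X)$.

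Collecting these facts, every image of a non-constant $\mG_m \to X$ is contained in $Z_{\uU}(X)$, which is closed by \autoref{lemma:unipotentKLclosed}; taking Zariski closures gives $\Exc'_{\T}(X) \subseteq Z_{\uU}(X)$. For the reverse inclusion, I would note that each translated subgroup $g + \mG_a \subseteq X$ is itself the Zariski closure of the image of a non-constant morphism $\mG_m \to g + \mG_a \subseteq X$ (for instance the standard inclusion $\mG_m \hookrightarrow \mA^1$ translated by $g$), so that $g + \mG_a \subseteq \Exc'_{\T}(X)$; taking the union over all such cosets yields $Z_{\uU}(X) \subseteq \Exc'_{\T}(X)$, and the two inclusions give equality.

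The part requiring the most care—rather than a genuine obstacle—is the rigidity input that every morphism $\mG_m \to A$ is constant, combined with the elementary but essential density statement that a non-constant map $\mG_m \to \mA^1$ has dense image. It is precisely this density that upgrades ``$f(\mG_m)$ is contained in a coset'' to ``the entire coset lies in $X$,'' which is what forces a genuine unipotent translate into $X$ and makes the identification with $Z_{\uU}(X)$ succeed. The remaining manipulations amount to routine bookkeeping with the decompositions \eqref{eqn:Langlikebreakdown} and \eqref{eqn:Kawamata}.
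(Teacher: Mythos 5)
Your proof is correct and follows essentially the same route as the paper's: project to $A$ via $\pi$, use that $A$ contains no rational curves to force the image of a non-constant $\mG_m \to X$ into a coset of $\mG_a$, and use the closedness of $Z_{\uU}(X)$ from \autoref{lemma:unipotentKLclosed} together with the open immersion $\mG_m \hookrightarrow \mG_a$ for the reverse inclusion. If anything, you are slightly more explicit than the paper in two places: you actually prove that $G$ has no $\mG_m$-subgroups (the paper simply invokes toric rank $0$), and you spell out the density step showing that the entire coset $g+\mG_a$, not merely $f(\mG_m)$, lies in the closed set $X$.
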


\begin{proof}
Since $G$ has toric rank 0, there are no subgroups of $G$ isomorphic to a torus, and hence the toric Kawamata locus $Z_{\T}(X)$ of $X$ is empty. 

For the latter claim, we first note that the unipotent Kawamata locus $Z_{\uU}(X)$ of $X$ is contained in the toric exceptional locus $\Exc'_{\T}(X)$. Indeed, if $G' \subseteq Z_{\uU}(X)$, then $G'$ is a translated subgroup of $G$ which is isomorphic to $\mG_a$ and contained in $X$. 
Note that $\mG_m$ is an open subscheme of $\mG_a$, and hence $G'$ is contained in the Zariski closure of a non-constant morphism from $\mG_m \to X$. 
Thus, it suffices to prove that $\Exc'_{\T}(X)$ is contained in $Z_{\uU}(X)$ i.e., we need to show that the Zariski closure of the union of the images of all morphisms $\varphi\colon \mG_m \to X$ is contained in $Z_{\uU}(X)$. 
Consider the composition of morphisms $\mG_m \to X \subset G \to A$ where $X \subset G$ is simply the inclusion map.
Since $\mG_m$ is smooth, the morphism $\mG_m \to A$ extends to a morphism from $\mP^1 \to A$, and moreover, this map must be constant as $A$ has no rational curves. 
This implies that $\varphi(\mG_m)$ is in fact contained in the translate of the kernel of $G \to A$, which by assumption is $\mG_a$, and so $\varphi(\mG_m)$ is contained in a translate of $\mG_a$. 
Since $\varphi(\mG_m)$ is already contained in $X$, this implies that $\varphi(\mG_m)$ is contained in $Z_{\uU}(X)$.
Since $\varphi$ was arbitrary, we have that the union of the images $\varphi(\mG_m)$ as $\varphi$ ranges over all morphisms from $\mG_m \to X$ are contained in $Z_{\uU}(X)$. 
By \autoref{lemma:unipotentKLclosed}, $Z_{\uU}(X)$ is closed and hence the Zariski closure of the union of the images $\varphi(\mG_m)$ is contained in $Z_{\uU}(X)$ i.e., $\Exc'_{\T}(X) \subseteq Z_{\uU}(X)$. 
\end{proof}

\begin{subprop}\label{prop:KawamataequalLanglike}
Let $G$ be a commutative algebraic $k$-group with unipotent rank 1, and let $X$ be a closed subvariety of $G$. 
The abelian Kawamata locus $Z_{\A}(X)$ is equal to the abelian Lang-like exceptional locus $\Exc'_{\A}(X)$. 
\end{subprop}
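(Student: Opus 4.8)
The plan is to prove the equality by establishing the two inclusions separately, the reverse one resting on the rigidity lemma for morphisms out of abelian varieties.

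The inclusion $Z_{\A}(X) \subseteq \Exc'_{\A}(X)$ is immediate from the definitions. If $W = g + B \subseteq X$ is a non-trivial translated abelian subvariety of $G$, with $B$ an abelian subvariety of $G$ and $\dim B \geq 1$, then the map $B \to X$, $b \mapsto g + b$, is a non-constant morphism from an abelian variety into $X$, so $W$ is contained in the union of images defining $\Exc'_{\A}(X)$ in \eqref{eqn:Langlikebreakdown}. Taking the union over all such $W$ gives the inclusion.

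For the reverse inclusion $\Exc'_{\A}(X) \subseteq Z_{\A}(X)$, I would first show that the image of every non-constant morphism $f \colon A' \to X \subseteq G$ from an abelian variety is a non-trivial translated abelian subvariety of $G$. After replacing $f$ by $f - f(0)$ one may assume $f(0) = 0$; the morphism $(x,y) \mapsto f(x+y) - f(x) - f(y)$ is then constant (equal to the identity) on $A' \times \{0\}$ and on $\{0\} \times A'$, so by the rigidity lemma (using that $A'$ is complete) it vanishes identically, and $f$ is a homomorphism $\phi$. Thus $f(A') = f(0) + \phi(A')$, and $\phi(A')$ is a closed subgroup of $G$ (image of a homomorphism of algebraic groups) that is complete (image of the complete variety $A'$); a connected complete commutative algebraic subgroup is an abelian variety, so $B := \phi(A')$ is an abelian subvariety of $G$, with $\dim B \geq 1$ since $f$ is non-constant. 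Hence $f(A') = f(0) + B$ is a non-trivial translated abelian subvariety contained in $X$, so $f(A') \subseteq Z_{\A}(X)$ by \autoref{defn:Kawamatalocus}. Ranging over all such $f$, the whole union of images defining $\Exc'_{\A}(X)$ lies in $Z_{\A}(X)$.

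The remaining, and main, obstacle is to pass to the Zariski closure, i.e. to prove that $Z_{\A}(X)$ is itself closed, so that $\Exc'_{\A}(X) = \overline{Z_{\A}(X)} = Z_{\A}(X)$. I would first note that for each fixed abelian subvariety $B \subseteq G$ the locus $S_B = \{ g \in G : g + B \subseteq X \}$ is closed and $B$-invariant, by writing it as $G \setminus \pr_1\!\big((G \times B) \setminus m^{-1}(X)\big)$, where $m \colon G \times B \to G$ is addition and $\pr_1$ is the open projection; then $Z_{\A}(X) = \bigcup_{\dim B \geq 1} S_B$. Since this union is a priori infinite, closedness does not follow formally, and the crux is to control the family of translated abelian subvarieties of $X$. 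Here I would use that the abelian subvarieties of the fixed group $G$ form a countable set with no positive-dimensional moduli (they inject into the abelian quotient $A$ via a homomorphic section, which is unique when it exists), so that in any algebraic family of translated abelian subvarieties the underlying subgroup $B$ is locally constant and only the translation varies; this makes the relevant family of cycles proper, so any specialization of translated abelian subvarieties contained in $X$ is again one, giving $\overline{Z_{\A}(X)} = Z_{\A}(X)$. As a sanity check, \autoref{thm:Kawamataclosed} already bounds $\overline{Z_{\A}(X)} \subseteq Z(X)$. I expect this boundedness/specialization step (essentially an Abramovich-style Chow-variety argument) to be the delicate part, whereas the rigidity lemma disposes cleanly of the identification of images with translated abelian subvarieties.
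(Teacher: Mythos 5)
Your proposal is correct, but the heart of it --- the reverse inclusion --- goes by a genuinely different and considerably shorter route than the paper's. The paper does not invoke the rigidity lemma: it first reduces to toric rank $0$ using \autoref{thm:Iitakagrouphom} applied to the semi-abelian quotient, then treats a regular section $\sigma\colon A\to G$ by intersection theory on the equivariant completion $\overline{G}=\mP(\mathcal{E})$ (writing $D=\sigma(A)\sim G_{\infty}+f^*D_0$, deducing $D_0$ trivial and $D^2=0$, hence that $D$ is a subgroup), then bootstraps to the case where $\pi\circ\varphi$ is finite, and finally to arbitrary $\varphi\colon A'\to G$ via Poincar\'e reducibility and a minimal factor $B_I$ surjecting onto $A$. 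Your observation that the standard rigidity argument works with target any separated group variety --- so that $\varphi$ with $\varphi(0)=0$ is automatically a homomorphism, whose image is a closed, connected, complete subgroup and hence an abelian subvariety --- is valid and collapses all three cases at once; it also makes no use of the unipotent rank $1$ hypothesis. What the paper's longer route buys is the finer structural information harvested in the corollary immediately following the proposition (over a simple $A$, a nonconstant $A'\to G$ forces a regular section disjoint from $G_{\infty}$, hence $G\cong\mG_a\times A$ and $\overline{G}\cong\mP(\cO_A^2)$), which does not fall out of the rigidity argument alone.

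On the closure step: you are right that the stated equality, rather than merely $\Exc'_{\A}(X)=\overline{Z_{\A}(X)}$, requires $Z_{\A}(X)$ to be closed, and your reduction to the closed, $B$-invariant loci $S_B$ is fine. The paper's own proof of this proposition is silent on this point (closedness is supplied elsewhere, via \autoref{thm:Kawamataclosed} and the Bogomolov-style argument in \autoref{prop:abelianKawamataproper}), so your acknowledged sketch is not a gap relative to the paper; but do note that countability of the set of abelian subvarieties alone does not yield closedness of the union, and your parenthetical claim that abelian subvarieties of $G$ inject into those of $A$ via a homomorphic section is imprecise --- the map $B\to\pi(B)$ is in general only an isogeny, and several $B$ may lie over the same image. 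The boundedness you need is essentially what \autoref{lemma:largestabeliangeneral} provides.
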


\begin{proof}
By definition, we have that $Z_{\A}(X) \subseteq \Exc'_{\A}(X)$.
To prove the reverse inclusion, it suffices to prove that for any abelian $k$-variety $A'$, the image of any $k$-morphism $\varphi\colon A'\to G$ is the translate of an algebraic subgroup.  
Indeed, by \cite[\href{https://stacks.math.columbia.edu/tag/0AH6}{Tag 0AH6}]{stacks-project}, we have that the image $\varphi(A')$ will be complete, and a complete algebraic group is an abelian $k$-variety. 

First, we show how to reduce to the setting where $G$ has unipotent rank 1 and toric rank 0. 
Write $G$ as in \autoref{defn:unipotent} i.e., 
\[
0 \to \mG_a \times \mG_m^t \to G \to A \to 0
\]
where $A$ is an abelian $k$-variety. 
Since the linear part of $G$ is split, we may also identify $G$ with the short exact sequence of commutative algebraic groups
\[
0 \to \mG_a \to G \to G' \to 0
\]
where $G'$ is a semi-abelian $k$-variety with toric rank $t$. 
By \autoref{thm:Iitakagrouphom},  we have that the $k$-morphism $A' \to G \to G'$ is the translate of a group $k$-homomorphism, and in particular the image is a translate of an abelian $k$-subvariety $A''$ of $G'$. 
Note that the pre-image of the translate of $A''$ in $G$ is a translate of a closed subgroup $G''$ of $G$, and we also have that $G''$ is a commutative algebraic group of unipotent rank 1 and toric rank 0. 
Since the image $\varphi(A')$ is contained in the translate of $G''$ by construction, we have reduced our claim to the setting of unipotent rank 1 and toric rank 0. 

By our reduction, it suffices to consider the case where $G$ is of the form
\[
0 \to \mG_a \to G \to A \to 0
\]
where $A$ is an abelian $k$-variety. 
Let $\pi\colon G\to A$ denote the quotient map. 
We divide the proof into three cases.\\

\noindent
\textit{Case 1}.~We first assume that the map $\pi$ admits a section $\sigma\colon A \to G$. 
We claim that $\sigma(A)$ is the translate of a subgroup. 
Let $D = \sigma(A)$, let $\overline{G}$ be the equivariant completion defined in Subsection \ref{subsec:equivariantcompletion}, and let $G_{\infty}$ denote the divisor at infinity of $\overline{G}$, so $G\cong \overline{G}\setminus G_{\infty}$. 
Recall that $\overline{G}$ is a projective bundle over $A$,  namely $\overline{G}\cong \mP(\mathcal{E})$ for a rank 2 vector sheaf, and let $f\colon \overline{G} \to A$ denote the natural map. 

By properties of the Picard group of projective bundles (cf.~\cite[Chapter II, Section VIII]{Har}), we have that there exists some divisor $D_0 \in \Pic(A)$ such that $D\sim G_{\infty} + f^*D_0$. 
Since $D\cap G_{\infty} = \emptyset$, we have that $D_0$ is trivial as $G_{\infty}^2 = 0$ by \autoref{lemma:conormaltrivial}. 
Therefore, 
\[
D^2 = (G_{\infty} + f^*D_0)^2 = G_{\infty}^2 = 0
\]
and hence any translate of $D$ is either itself or disjoint from $D$. 
Moreover, this only occurs when $D$ is an algebraic subgroup of $G$. \\

\noindent
\textit{Case 2}.~Suppose that $\varphi\colon A' \to G$ is any morphism and that $\pi\circ \varphi\colon A' \to A$ is finite. 
By Case 1, we have that the map $A' \to G \times_A A'$ is the translate of a group homomorphism, and hence the same holds for $A' \to G \times_A A' \to G$ since $G \times_A A' \to G$ is a group homomorphism. 
Now, the kernel of $A' \to A$ is a finite subgroup of $\mG_a$, and hence it must be trivial. 
Therefore, we have a section of the map $\pi\colon G\to A$, and moreover, Case 1 implies that the image of $\varphi\colon A' \to G$ is the translate of an algebraic subgroup of $G$\\

\noindent
\textit{Case 3}.~
Now for the general setting, let $\varphi\colon A'\to G$ be
a $k$-morphism.  
By applying a translation to $G$, we may assume that
$\varphi(0)=0$.  We now aim to show that the image of $\varphi$ is a subgroup of $G$.
By Poincar\'e reducibility \cite[Section 18, Corollary 1]{MumAb}, there exist
simple abelian $k$-varieties $B_1,\dots,B_n$ (repeated as necessary)
such that $A'$ is isogenous to the product $B_1\times\cdots\times B_n$.
Let $\psi\colon\prod B_i\to A'$ denote the above isogeny,
and let $\gamma=\pi\circ\varphi\circ\psi\colon\prod B_i\to A$.
By \autoref{thm:Iitakagrouphom}, $\gamma$ is a group homomorphism.

Let $A_0$ be the image of $\gamma$; it is an abelian subvariety of $A$.
After replacing $A$ with $A_0$ and $G$ with $\pi^{-1}(A_0)$, we may assume
that $A=A_0$; i.e., $\pi\circ\varphi\colon A'\to A$ is surjective.

For subsets $I\subseteq\{1,\dots,n\}$, let $B_I$ denote the kernel of
the projection $\prod B_i\to\prod_{i\notin I} B_i$.
Choose $I$ minimal such that $\gamma_I:=\gamma\big|_{B_I}\colon B_I\to A$
is still surjective.  We claim that $\gamma_I$ is finite.
Indeed, if $\gamma_I$ is not finite, then $\dim\ker\gamma_I>0$, so there
exists some $i\in I$ such that $\dim\pi_{Ii}(\ker\gamma_I)>0$, where
$\pi_{Ii}\colon B_I\to B_i$ is the projection.  Since $B_i$ is simple,
this implies that $\pi_{Ii}(\ker\gamma_I)=B_i$, and then a little group
theory gives that $\gamma_{I\setminus\{i\}}$ has the same image as $\gamma_I$,
contradicting the choice of $I$.

Then, since $\gamma_I\colon B_I\to A$ is finite, it follows from Case 2
that the image of $(\varphi\circ\psi)\big|_{B_I}\colon B_I\to G$
is a subgroup of $G$ (since it contains $0$).  This subset (or subscheme)
is contained in the image of $\varphi\circ\psi$, and the latter equals
the image of $\varphi$ since $\psi$ is surjective.

Now we have that the images $\varphi(A')\supseteq\varphi(\psi(B_I))$ in $G$
are both proper over $k$, hence over $A$, and are also affine over $A$
(since $G\to A$ is affine and the subsets are closed), so both of these
images are finite over $A$.  Since both images dominate $A$, they both
have the same dimension (equal to $\dim A$).  Also, since $A'$ and $B_I$
are irreducible, so are their images in $G$.  Therefore they must be equal.

Thus $\varphi(A')$ is a subgroup of $G$ (since $\varphi(\psi(B_I))$ is),
as was to be shown.
\end{proof}

We are now in a position to prove \autoref{thmx:main0}. We recall the statement of this result. 

\begin{subtheorem}[= \autoref{thmx:main0}]
Let $k$ be an algebraically closed of characteristic zero, let $G$ be a commutative algebraic $k$-group with unipotent rank 1 and toric rank 0, and let $X$ be a closed subvariety of $G$. 
The Lang-like exceptional locus of $X$ is equal to the Kawamata locus of $X$. 
\end{subtheorem}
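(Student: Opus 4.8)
The plan is to combine the two decompositions of the exceptional loci established earlier with the two preceding results, \autoref{lemma:toricKawamata} and \autoref{prop:KawamataequalLanglike}. Recall from \eqref{eqn:Langlikebreakdown} that, since $X$ is a closed subvariety of an algebraic $k$-group, the Lang-like exceptional locus splits as $\Exc'(X) = \Exc'_{\T}(X) \cup \Exc'_{\A}(X)$, and recall from \eqref{eqn:Kawamata} that the Kawamata locus splits as $Z(X) = Z_{\uU}(X) \cup Z_{\T}(X) \cup Z_{\A}(X)$. The proof amounts to matching the pieces of these two decompositions against one another.

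First I would dispose of the toric pieces. Because $G$ has toric rank $0$, \autoref{lemma:toricKawamata} gives simultaneously that $Z_{\T}(X)$ is empty and that the toric exceptional locus coincides with the unipotent Kawamata locus, $\Exc'_{\T}(X) = Z_{\uU}(X)$. Next I would invoke \autoref{prop:KawamataequalLanglike}, valid in unipotent rank $1$ without any hypothesis on the toric rank, to identify the abelian pieces: $Z_{\A}(X) = \Exc'_{\A}(X)$. Assembling these identities yields
\[
\Exc'(X) = \Exc'_{\T}(X) \cup \Exc'_{\A}(X) = Z_{\uU}(X) \cup Z_{\A}(X) = Z_{\uU}(X) \cup Z_{\T}(X) \cup Z_{\A}(X) = Z(X),
\]
where the penultimate equality inserts the empty set $Z_{\T}(X)$. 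This is the asserted equality.

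At this stage there is no real obstacle left: the theorem is purely a bookkeeping step that collates the matched decompositions, and all of the substantive content has been absorbed into the two cited results. The genuinely difficult ingredient was \autoref{prop:KawamataequalLanglike} — the statement that the image of an arbitrary $k$-morphism from an abelian variety into $G$ is a translate of an algebraic subgroup, which is what forces $\Exc'_{\A}(X)$ and $Z_{\A}(X)$ to agree. Its Case~3 is where the work lies, requiring Poincaré reducibility to split $A'$ up to isogeny into simple factors, a minimality argument to extract a subfactor mapping finitely onto $A$, and then a dimension-and-finiteness comparison over $A$ to conclude that the two candidate images coincide. Everything downstream of that proposition, including the present theorem, is formal.
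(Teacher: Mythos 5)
Your proposal is correct and is exactly the paper's argument: the paper's proof of this theorem is a one-line citation of the decompositions \eqref{eqn:Langlikebreakdown} and \eqref{eqn:Kawamata} together with \autoref{lemma:toricKawamata} and \autoref{prop:KawamataequalLanglike}, and you have simply spelled out the same bookkeeping. Your closing remarks correctly identify \autoref{prop:KawamataequalLanglike} (and in particular its Case~3) as where the substantive work lives.
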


\begin{proof}
This follows from the decomposition of the Lang-like exceptional locus \eqref{eqn:Langlikebreakdown} and the Kawamata locus \eqref{eqn:Kawamata} and \autoref{lemma:toricKawamata} and \autoref{prop:KawamataequalLanglike}. 
\end{proof}

As a corollary of \autoref{prop:KawamataequalLanglike}, we have the following. 
\begin{subcorollary}
Let $G$ be an algebraic $k$-group of the form 
\[
0 \to \mG_a \to G \to A \to 0
\] 
where A is a simple abelian $k$-variety. 
If there exists a non-constant $k$-morphism from an abelian $k$-variety $A’ \to G$, then $G \to A$ admits a regular section, and hence $G$ is isomorphic to $\mG_a \times A$. 
Moreover, the equivariant completion $\overline{G}$ of G defined in Subsection \ref{subsec:equivariantcompletion} is isomorphic to $\mP(\cO_A^2)$.
\end{subcorollary}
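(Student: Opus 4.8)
The plan is to feed the hypothesis into \autoref{prop:KawamataequalLanglike}, which carries essentially all of the geometric content, and then extract the splitting from the simplicity of $A$ together with the fact that $\mG_a$ is affine. Let $\pi\colon G \to A$ denote the quotient map, and suppose $\varphi\colon A' \to G$ is a non-constant $k$-morphism from an abelian $k$-variety. Since $G$ has unipotent rank $1$, \autoref{prop:KawamataequalLanglike} applies and tells us that the image $\varphi(A')$ is the translate of an algebraic subgroup of $G$; being the image of the proper, connected variety $A'$, it is complete and connected, so we may write $\varphi(A') = g_0 + B$ for a (nonzero) abelian subvariety $B \subseteq G$.

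First I would show that $\pi|_B\colon B \to A$ is an isomorphism. Its image $\pi(B)$ is an abelian subvariety of $A$, hence equals $0$ or $A$ by simplicity. If $\pi(B)=0$, then $B$ lies in the fibre $\ker(\pi)\cong\mG_a$; but a complete connected subvariety of the affine line is a point, forcing $B=0$ and hence $\varphi$ constant, a contradiction. Thus $\pi|_B$ is surjective. Its kernel $B\cap\mG_a$ is a closed subgroup of $\mG_a$, and in characteristic zero the only such subgroups are $0$ and $\mG_a$; the latter is excluded because $B$ is proper while $\mG_a$ is affine of dimension $1$. Hence $B\cap\mG_a=0$, so $\pi|_B$ is a surjective homomorphism of abelian varieties with trivial kernel, i.e.\ an isomorphism.

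Consequently $\sigma := (\pi|_B)\inv\colon A \to B \hookrightarrow G$ is a regular section of $\pi$, and since it is a group homomorphism it splits the sequence $0 \to \mG_a \to G \to A \to 0$; equivalently, $B$ is a complement to $\mG_a$ inside $G$, so $G \cong \mG_a \times B \cong \mG_a \times A$. (Alternatively, one may argue via the identification $\Ext(A,\mG_a)\cong H^1(A,\cO_A)$, under which the class of the $\mG_a$-torsor $G\to A$ and the class of the group extension agree, so that triviality of the torsor forces the extension to split.) For the equivariant completion I would invoke the functoriality of Serre's construction from \autoref{subsec:equivariantcompletion}: the completion $\overline{G}=\mP(\mathcal{E})$ is the $\mP^1$-bundle associated to the $\mG_a$-torsor $G\to A$ via the translation action of $\mG_a$ on $\mathbb{A}^1\subset\mP^1$. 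Since $G\to A$ is now the trivial torsor, the associated bundle is the trivial $\mP^1$-bundle $A\times\mP^1$, that is, $\overline{G}\cong\mP(\cO_A^{\oplus 2})=\mP(\cO_A^2)$; at the level of vector bundles this reflects that the defining extension $0\to\cO_A\to\mathcal{E}\to\cO_A\to 0$ is classified by the same class in $H^1(A,\cO_A)$, which is now zero.

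The only genuine input is \autoref{prop:KawamataequalLanglike}; everything afterward is formal. The single point that requires care is ensuring that the section produced is a group homomorphism, so that one obtains a splitting of the group extension rather than merely a trivialization of the underlying $\mG_a$-torsor — here this is automatic because $\sigma$ is the inverse of the group isomorphism $\pi|_B$.
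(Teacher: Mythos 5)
Your proof is correct and takes essentially the same route as the paper: both deduce the corollary from \autoref{prop:KawamataequalLanglike}, the paper by quoting Cases 2 and 3 of its proof to get a regular section directly, and you by taking the proof's headline conclusion (the image is a translate of an algebraic subgroup) and then using simplicity of $A$ to show $\pi|_B$ is an isomorphism. Your write-up is in fact more complete than the paper's one-line argument, since it makes explicit why the section is a group homomorphism (so the extension splits) and why triviality of the $\mathbb{G}_a$-torsor forces $\overline{G}\cong\mathbb{P}(\mathcal{O}_A^2)$, steps the paper dismisses with ``our claims readily follow.''
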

\begin{proof}
When $A$ is simple, the proof of \autoref{prop:KawamataequalLanglike}, in particular Cases 2 and 3, implies that the image of $A’ \to G$ is a regular section of $G \to A$ which is disjoint from $G_{\infty}$. Our claims readily follow. 
\end{proof}

\subsection{Proof of \autoref{thmx:main1}}
To prove \autoref{thmx:main1}, we will first show that for a commutative algebraic $k$-group $G$ of unipotent rank 1, there exists a largest \textit{abelian}\footnote{We note that it is well-known that there exists a largest semi-abelian $k$-subvariety of $G$ (cf.~\cite[Lemma 5.6.1]{Brion:StructureTheorems}). The proof of this result relies on the fact that semi-abelian $k$-varieties are stable under group extensions in a strong sense, which in turn use a characterization of semi-abelian $k$-varieties as those algebraic $k$-groups for which every morphism from $\mA^1_k$ to the group is constant.  As such, the proof of this strong stability under group extensions does not readily apply to abelian $k$-varieties, and hence we cannot immediately deduce the existence of a largest abelian $k$-variety.} $k$-subvariety of $G$.  We accomplish this in three lemmas; the first two lemmas (\autoref{lemma:largestabelianunipotent} and \autoref{lemma:largestabelianunipotent}) identify a largest abelian $k$-subvariety in a commutative algebraic $k$-group of unipotent rank 1 and toric rank 0 and of unipotent rank 0 and toric rank 1, respectively. 
We derive our general result (\autoref{lemma:largestabeliangeneral}) from these two lemmas and \autoref{lemma:productoverabelian}.

\begin{sublemma}\label{lemma:abvarietydirect}
Let $G$ be a commutative algebraic $k$-group. 
Then the set of abelian subvarieties of $G$ forms a direct set under inclusion.
\end{sublemma}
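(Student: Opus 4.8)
The plan is to prove that the collection of abelian subvarieties of $G$ is \emph{upward directed}, i.e.\ that any two abelian subvarieties $A_1,A_2 \subseteq G$ admit a common abelian subvariety of $G$ containing both. The natural candidate is the sum $A_1 + A_2$, which I would define as the image of the multiplication morphism
\[
\mu\colon A_1 \times A_2 \longrightarrow G, \qquad (a_1,a_2) \longmapsto a_1 + a_2.
\]
Since $A_1$ and $A_2$ are abelian subvarieties, their product $A_1 \times A_2$ is again an abelian $k$-variety, so the source of $\mu$ is well understood.

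First I would observe that, because $G$ is commutative, $\mu$ is a homomorphism of algebraic $k$-groups: additivity in each factor together with commutativity of $G$ makes $\mu$ respect the group law. By the standard structure theory of algebraic groups (cf.\ \cite{Brion:StructureTheorems}), the image of a homomorphism of algebraic $k$-groups is a closed subgroup of the target, so $\mu(A_1 \times A_2)$ is a closed subgroup of $G$. Next I would identify this subgroup as an abelian variety: the source $A_1 \times A_2$ is a complete, connected, irreducible $k$-variety, so its image is complete (as in the use of \cite[\href{https://stacks.math.columbia.edu/tag/0AH6}{Tag 0AH6}]{stacks-project} in the proof of \autoref{prop:KawamataequalLanglike}), connected, and irreducible; endowing it with its reduced induced structure, which is automatically smooth in characteristic zero, yields a connected complete algebraic $k$-subgroup, i.e.\ an abelian $k$-subvariety $A_3 := \mu(A_1 \times A_2)$ of $G$. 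Finally, evaluating $\mu$ on $A_1 \times \{0\}$ and on $\{0\} \times A_2$ gives $A_1 \subseteq A_3$ and $A_2 \subseteq A_3$, which furnishes the required common upper bound and establishes directedness.

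The one step that deserves care is the passage from the set-theoretic image of $\mu$ to an honest abelian subvariety: one must know both that the homomorphic image is a \emph{closed} subgroup (so that $A_3$ is itself an algebraic group rather than a merely constructible subgroup) and that completeness of the source forces completeness of $A_3$. Both facts are standard, but they are the crux of the argument; once they are in hand, the remaining verifications, namely that $A_1 \times A_2$ is abelian, that $\mu$ is a group homomorphism, and that $A_1, A_2 \subseteq A_3$, are immediate.
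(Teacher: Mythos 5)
Your proposal is correct and follows essentially the same route as the paper: both take the image of the addition homomorphism $A_1 \times A_2 \to G$, use \cite[\href{https://stacks.math.columbia.edu/tag/0AH6}{Tag 0AH6}]{stacks-project} to get completeness of the image, and conclude that this closed connected complete subgroup is an abelian subvariety containing both factors. Your additional remarks on closedness of the homomorphic image and smoothness in characteristic zero are just a more careful spelling-out of what the paper leaves implicit.
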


\begin{proof}
Let $B_1$ and $B_2$ be two abelian subvarieties of $G$. 
Then the addition map gives a group homomorphism
\[
B_1 \times B_2 \to G
\]
whose image is an abelian subvariety of $G$ containing both $B_1$ and $B_2$. 
Indeed, the fact that the image is complete follows from \cite[\href{https://stacks.math.columbia.edu/tag/0AH6}{Tag 0AH6}]{stacks-project} and the containment claim is clear. 
\end{proof}

\begin{sublemma}\label{lemma:largestabeliantoric}
Let $G$ be a commutative algebraic $k$-group for which there exists an exact sequence
\[
0 \to \mG_{m} \to G \to A \to 0
\]
with $A$ an abelian variety.
Then there is a largest abelian subvariety $B$ of $G$. 
\end{sublemma}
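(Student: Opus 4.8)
The plan is to obtain the largest abelian subvariety purely formally, by combining the directedness recorded in \autoref{lemma:abvarietydirect} with the boundedness of dimensions inside $G$; in particular the toric rank $1$ hypothesis will play no role in the existence assertion itself. Let $\sS$ denote the set of abelian subvarieties of $G$, which we take to include the trivial subgroup so that $\sS \neq \emptyset$. By \autoref{lemma:abvarietydirect}, $\sS$ is directed under inclusion: for $B_1, B_2 \in \sS$ the image of the addition map $B_1 \times B_2 \to G$ is again a complete subgroup, hence an abelian subvariety containing both. Since every element of $\sS$ is a closed subscheme of $G$, its dimension is at most $\dim G$, so the set of integers $\{\dim B : B \in \sS\}$ is bounded and I may fix some $B \in \sS$ of maximal dimension.

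I then claim that this $B$ is the largest element of $\sS$. It is maximal under inclusion: if $B \subsetneq B'$ for some $B' \in \sS$, then since $B'$ is irreducible and $B$ is a proper closed subvariety of it we would have $\dim B < \dim B'$, contradicting the maximality of $\dim B$. Given now an arbitrary $B' \in \sS$, directedness furnishes $B'' \in \sS$ with $B \subseteq B''$ and $B' \subseteq B''$; maximality of $B$ forces $B = B''$, whence $B' \subseteq B$. Thus $B$ contains every abelian subvariety of $G$, which is the desired assertion.

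The existence argument therefore presents no real obstacle once \autoref{lemma:abvarietydirect} is in hand: it is simply the familiar fact that a nonempty directed poset satisfying the ascending chain condition has a top element. The genuinely substantive point, which is where the toric structure enters and which is what one needs downstream to reassemble the general unipotent rank $1$ case through \autoref{lemma:productoverabelian}, is the explicit \emph{identification} of $B$. For that one would study the class of the extension $0 \to \mG_m \to G \to A \to 0$ in $\Ext(A,\mG_m)$ and describe $B$ in terms of the largest abelian subvariety of $A$ over which this class becomes torsion, so that the pullback extension splits after an isogeny. I expect that bookkeeping, rather than the bare existence established above, to be the main technical work.
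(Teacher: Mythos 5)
Your argument is correct, and it is genuinely different from (and more elementary than) the paper's. You observe that \autoref{lemma:abvarietydirect} makes the set of abelian subvarieties of $G$ a directed poset, that dimensions are bounded by $\dim G$, and that a proper inclusion of irreducible closed subvarieties strictly increases dimension; hence an abelian subvariety of maximal dimension is maximal under inclusion, and a maximal element of a directed poset is its maximum. Each step checks out, and as you note the toric hypothesis is never used, so the same three lines also prove \autoref{lemma:largestabelianunipotent} and \autoref{lemma:largestabeliangeneral} directly, without the reduction through \autoref{lemma:productoverabelian}. The paper instead works through the identification of the extension with a class $\sM\in\Pic^0(A)$: it first extracts a largest element $B'$ from the directed set of images $\pi_B(B)\subseteq A$ (which is in fact the same formal directedness-plus-dimension argument you use, just applied downstairs), and then shows that the abelian subvarieties of $G$ dominating $B'$ are isogenies onto $B'$ of degree bounded by the order of $\sM|_{B'}$, hence finite in number, so that a finite directed set has a top. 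What the paper's route buys is the structural description you sketch in your last paragraph: the largest abelian subvariety of $G$ is an isogeny onto the largest abelian subvariety of $A$ over which $\sM$ restricts to a torsion class, of controlled degree. One small correction to your framing, however: that identification is \emph{not} actually needed downstream — \autoref{lemma:largestabeliangeneral} and \autoref{prop:abelianKawamataproper} use only the bare existence of the largest abelian subvariety and the fact that it contains all others, which your argument already delivers.
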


\begin{proof}
By \cite[Lemma 2.2]{Vojta:IntegralPoints1}, we have that the set of extensions $G$ (as above) is canonically and functorially isomorphic to $\Pic^0(A)$, in such a way that, if $G$ corresponds to $\sM \in \Pic^0(A)$, then $G$ is isomorphic as an abstract $k$-variety to the $k$-variety $\mP(\sM \oplus \sO_A)$, minus the (disjoint) sections corresponding to the projections
\[
\sM \oplus \sO_A \twoheadrightarrow \sM \quad \text{ and } \quad\sM \oplus \sO_A \twoheadrightarrow \sO_A.
\]
Let this difference be denoted $\mP'(\sM)$.

Let $\pi\colon G\to A$ denote the projection map, let $B$ be an abelian subvariety of $G$, and let $\pi_B\colon B \to A$ denote the restriction of $\pi$ to $B$. 
By the functoriality of the above construction, the product $G \times_A B$ is isomorphic to $\mP'(\pi_B^*\sM)$.
The diagonal map $B\to G \times_A B$ gives a regular section of this scheme over $B$, and the existence of the three disjoint sections of $\mP(\pi_B^*\sM)$ implies that the projective bundle is trivial, and hence $\pi_B^*\sM$ is trivial. 
Note that the map $\pi_B\colon B \to \pi_B(B)$ is an isogeny and hence is finite and flat. 
Indeed, it is clearly surjective, and the kernel must be a finite subgroup of $\mG_m$ by assumption. 
Since $\pi_B$ is finite and flat, $\pi_B^*\sM$ being trivial implies that $\sM_{|\pi_B(B)}$ is torsion in $\Pic^0(\pi_B(B))$. 

By \autoref{lemma:abvarietydirect}, the set
\[
\brk{\pi_B(B) : B \text{ abelian subvariety of $G$}}
\]
of abelian $k$-subvarieties of $A$ is a directed set under inclusion, and so it has a largest element (in the sense of dimension). 
Let $B'$ denote this largest element. 
The lemma will follow if we can show that the set of abelian subvarieties of $G$ that dominate $B'$ is finite. 
Indeed, if this set is finite, then we have a finite directed set which must have a largest element.

If $B$ dominates $B'$, then $B \to B'$ is an isogeny by the above discussion, and so it suffices to prove that the degree of the isogeny is bounded by the order of $\sM_{|B'}$ in $\Pic^0(B')$. 
If the degree of $B \to B'$ does not divide the order of $\sM_{|B'}$, then $B\to B'$ factors as
\[
B \xrightarrow{\alpha} B'' \xrightarrow{\beta} B'
\]
where $\deg(\alpha) > 1$ and $\beta^*(\sM_{|_{B'}})$ is trivial. 
However, this would imply that the inclusion $B \hookrightarrow G$ factors through $\alpha$ which contradicts the inclusion being an injection. 
Therefore, we have that the degree of the isogeny is bounded by the order of $\sM_{|B'}$ in $\Pic^0(B')$, and the result follows. 
\end{proof}

\begin{sublemma}\label{lemma:largestabelianunipotent}
Let $G$ be a commutative algebraic $k$-group for which there exists an exact sequence
\[
0 \to \mG_{a} \to G \to A \to 0
\]
with $A$ an abelian variety.
Then there is a largest abelian subvariety $B$ of $G$. 
\end{sublemma}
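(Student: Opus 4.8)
The plan is to mirror the structure of the proof of \autoref{lemma:largestabeliantoric}, realizing each abelian subvariety of $G$ as a section of the projection $\pi\colon G\to A$, while exploiting two features of the additive characteristic-zero setting that make the argument strictly simpler than the toric one. First I would fix an abelian subvariety $B\subseteq G$ and consider the restriction $\pi_B\colon B\to A$. Its kernel is a closed subgroup of $B$ contained in $\mG_a$; being at once complete (closed in $B$) and affine (closed in $\mG_a$) it is finite, and in characteristic zero the only finite subgroup scheme of $\mG_a$ is trivial. Hence $\pi_B$ is a closed immersion onto an abelian subvariety $A_B:=\pi(B)$ of $A$, and $B$ is the image of a (group-theoretic) section of $\pi$ over $A_B$; equivalently, the pulled-back extension $G\times_A A_B$ splits.

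Next I would record the rigidity that is absent in the $\mG_m$-case: two sections of $\pi$ over a fixed $A_B$ differ by an element of $\Hom(A_B,\mG_a)$, and this group vanishes since an abelian variety admits no nonconstant homomorphism to an affine group. Thus the section over $A_B$ is unique, so $B\mapsto A_B$ is injective and the abelian subvarieties of $G$ are in bijection with those abelian subvarieties $A'\subseteq A$ over which the extension class of $G$ in $\Ext(A,\mG_a)\cong H^1(A,\mathcal O_A)$ restricts to zero. In particular there is no isogeny degree to control, so the finiteness argument needed in the proof of \autoref{lemma:largestabeliantoric} is not required here.

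To finish, I would invoke \autoref{lemma:abvarietydirect}: the abelian subvarieties of $G$ form a directed set under inclusion, and by the above each has dimension at most $\dim A$. A directed family of irreducible closed subvarieties of bounded dimension has a largest member -- taking $B_0$ of maximal dimension, any $B$ admits an upper bound $C\supseteq B_0,B$ with $\dim C=\dim B_0$, which forces $C=B_0$ and hence $B\subseteq B_0$. This $B_0$ is the desired largest abelian subvariety. The only genuinely substantive inputs are the two characteristic-zero facts isolated above (triviality of finite subgroups of $\mG_a$ and the vanishing of $\Hom(A_B,\mG_a)$); once these yield the clean correspondence with sections, directedness together with the dimension bound finishes the proof immediately, with none of the isogeny bookkeeping of the toric case.
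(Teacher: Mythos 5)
Your proof is correct, and its key reduction is the same as the paper's: the kernel of $\pi|_B\colon B\to A$ is a complete subgroup of $\mG_a$, hence finite, hence trivial in characteristic zero, so every abelian subvariety of $G$ maps isomorphically onto its image in $A$ (the paper phrases this as ``the degree of $B\to B'$ must be $1$'' and then defers to the argument of \autoref{lemma:largestabeliantoric}). Your extra observation that the splitting over a fixed image $A_B$ is unique because $\Hom(A_B,\mG_a)=0$ is a correct strengthening that the paper leaves implicit; it replaces the isogeny bookkeeping of the toric case by an honest bijection between abelian subvarieties of $G$ and those abelian subvarieties of $A$ on which the class of $G$ in $\Ext(A,\mG_a)\cong H^1(A,\sO_A)$ dies. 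Where you genuinely depart from the paper is the conclusion: instead of working with the directed set of images $\pi_B(B)$ inside $A$, you apply \autoref{lemma:abvarietydirect} directly to the abelian subvarieties of $G$ and note that a directed family of integral closed subvarieties of bounded dimension has a greatest element. This is cleaner, and it is worth pointing out that this closing argument uses neither the section analysis nor either characteristic-zero input: directedness together with the trivial bound $\dim B\le\dim G$ already produces a largest abelian subvariety of an arbitrary commutative algebraic $k$-group, so your final paragraph alone proves the lemma (and would equally prove \autoref{lemma:largestabeliantoric} and \autoref{lemma:largestabeliangeneral}); the extension-theoretic analysis buys the finer classification of which abelian subvarieties occur, but is not needed for mere existence of a largest one.
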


\begin{proof}
Let $\pi\colon G\to A$ denote the projection map. 
As in the proof of \autoref{lemma:largestabeliantoric}, let $B'$ denote the largest element of 
\[
\brk{\pi_B(B) : B \text{ abelian subvariety of $G$}},
\]
and let $B$ be an abelian subvariety of $G$ that maps to it. 
Since $\mG_a$ is not complete and has no non-trivial torsion subgroups,  the degree of the map $B\to B'$ must be 1. 
By the same reasoning in \autoref{lemma:largestabeliantoric}, we can find a largest abelian subvariety $B$ of $G$.  
\end{proof}

\begin{sublemma}\label{lemma:largestabeliangeneral}
Let $G$ be a commutative algebraic $k$-group for which there exists an exact sequence
\[
0 \to \mG_{a} \times \mG_{m}^{t} \to G \to A \to 0
\]
with $A$ an abelian variety.
Then there is a largest abelian subvariety $B$ of $G$. 
\end{sublemma}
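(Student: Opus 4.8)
The plan is to reduce to the two base cases already settled and then glue them along $A$. First I would invoke \autoref{lemma:productoverabelian} to write $G$ as a fibered product over $A$,
\[
G \cong G_0 \times_A G_1 \times_A \cdots \times_A G_t,
\]
where $G_0$ has unipotent rank $1$ and toric rank $0$ and each $G_i$ ($1 \le i \le t$) has unipotent rank $0$ and toric rank $1$. Each factor is then an extension of $A$ by $\mG_a$ or $\mG_m$, so \autoref{lemma:largestabelianunipotent} (applied to $G_0$) and \autoref{lemma:largestabeliantoric} (applied to each $G_i$ with $i \ge 1$) provide a largest abelian subvariety $B_i \subseteq G_i$ for every $i$.

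Next I would let $p_i \colon G \to G_i$ denote the projection coming from the fibered-product structure; each $p_i$ is a homomorphism of algebraic $k$-groups over $A$. The main construction is the closed subgroup scheme
\[
\wt{B} := B_0 \times_A B_1 \times_A \cdots \times_A B_t = \bigcap_{i=0}^{t} p_i^{-1}(B_i) \subseteq G,
\]
and I would take $B := \wt{B}^{\circ}$ to be its identity component. To see that $B$ is an abelian subvariety, I would argue that, since each $B_i$ is complete, each structure map $B_i \to A$ is proper, so $\wt{B} \to A$ and hence $\wt{B} \to \Spec(k)$ is proper; thus $\wt{B}$ is a complete commutative group scheme, its identity component $B$ is complete and connected, and in characteristic zero it is smooth, so $B$ is an abelian variety.

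It then remains to check that $B$ contains every abelian subvariety of $G$. Given any abelian subvariety $C \subseteq G$, the image $p_i(C)$ is complete by \cite[\href{https://stacks.math.columbia.edu/tag/0AH6}{Tag 0AH6}]{stacks-project} and is a subgroup of $G_i$, hence an abelian subvariety; maximality of $B_i$ forces $p_i(C) \subseteq B_i$ for all $i$, so $C \subseteq \wt{B}$. Since $C$ is irreducible and contains the identity, it lies in the identity component, i.e.~$C \subseteq B$. Therefore $B$ is the largest abelian subvariety of $G$, as desired.

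I expect the only genuine subtlety to be the verification that $B = \wt{B}^{\circ}$ is honestly an abelian variety: namely that the fibered product $\wt{B}$ is complete — which I would deduce from properness of the maps $B_i \to A$ — and that passing to the identity component loses nothing, since any abelian subvariety, being irreducible and containing $0$, automatically lands in $\wt{B}^{\circ}$. Everything else is a formal consequence of the maximality of each $B_i$ together with the universal property of the fibered product.
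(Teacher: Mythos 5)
Your proposal is correct and follows essentially the same route as the paper: decompose $G$ via \autoref{lemma:productoverabelian}, apply \autoref{lemma:largestabeliantoric} and \autoref{lemma:largestabelianunipotent} to each factor, and take the identity component of the fibered product of the resulting largest abelian subvarieties (the paper phrases this as the connected component of $(e_1,\dots,e_{t+1})$ in $B_1\times_{A'}\cdots\times_{A'}B_{t+1}$, which agrees with your $\wt{B}^{\circ}$). Your explicit verification of maximality via the projections $p_i$ and of completeness of $\wt{B}$ fills in details the paper leaves implicit, but it is the same argument.
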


\begin{proof}
By \autoref{lemma:productoverabelian}, we have that $G$ is isomorphic to a product over $A$ of $t + 1$ commutative algebraic groups, namely
\[
G \cong G_1 \times_A G_2 \times_A \cdots \times_A G_{t+1}
\]
where each of the $G_i$ has either unipotent rank 1 and toric rank 0 or unipotent rank 0 and toric rank 1. 
Applying \autoref{lemma:largestabeliantoric} and \autoref{lemma:largestabelianunipotent} to these factors, we get $t+1$ abelian varieties $B_1,\dots,B_{t+1}$ where $B_i$ is the largest abelian subvariety of $G_i$.  
Let $e_i$ denote the identity element of $B_i$ for each $i=1,\dots,t+1$, and let $A'$ denote the intersection of the images of $B_i \to A$ for $i=1,\dots,t+1$. 
Note that $A'$ is an abelian variety, and hence non-empty; in particular, it will contain the identity element of $A$. 
We have that the connected component of the element $(e_1,\dots,e_{t+1})$ in the fibered product $B_1 \times_{A'} B_2 \times_{A'} \cdots \times_{A'} B_{t+1}$ is an abelian subvariety of $G$, and using the above isomorphism,  we see that it is a largest abelian subvariety of $G$. 
\end{proof}

\begin{subprop}\label{prop:abelianKawamataproper}
Let $G$ be a commutative algebraic $k$-group with unipotent rank 1, and let $X$ be a closed subvariety of $G$. 
If $X$ is not fibered by subgroups, then $Z_{\A}(X) = \Exc'_{\A}(X)$ is a proper closed subset of $X$. 
\end{subprop}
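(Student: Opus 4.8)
The equality $Z_{\A}(X) = \Exc'_{\A}(X)$ is \autoref{prop:KawamataequalLanglike}, and since the Lang-like exceptional locus is by definition a Zariski closure, this common set is closed; thus only properness requires proof. The plan is to argue by contradiction: assuming $Z_{\A}(X) = X$, I will produce a positive-dimensional subgroup $B_0 \subseteq G$ with $X + B_0 = X$, which exhibits $X$ as the pullback $q^{-1}(q(X))$ along the quotient $q\colon G \to G/B_0$ (which exists by \autoref{thm:quotientsrepn}), contradicting the hypothesis that $X$ is not fibered by subgroups (\autoref{defn:notfibered}). At the outset I would reduce to the case where $k$ is uncountable: replacing $k$ by an uncountable algebraically closed extension $\Omega$ preserves all the relevant data, since every translated abelian subvariety over $k$ base changes to one over $\Omega$ (so $Z_{\A}(X_\Omega) = X_\Omega$), the stabilizer of $X$ commutes with this base change, and abelian subvarieties of $G$ do not grow under extension of algebraically closed fields because the endomorphism ring is unchanged. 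Consequently a subgroup produced over $\Omega$ descends to one over $k$.

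Next I would exploit the largest abelian subvariety $B$ of $G$, which exists by \autoref{lemma:largestabeliangeneral}. Every abelian subvariety of $G$ is contained in $B$, and an abelian variety has only countably many abelian subvarieties (they correspond to idempotents in the finite-dimensional $\mQ$-algebra $\End(B)\otimes\mQ$, with finitely many actual subvarieties in each isogeny factor); hence there are only countably many possible ``directions'' for the translated abelian subvarieties contained in $X$. For each abelian subvariety $B_0 \subseteq B$ I set
\[
T_{B_0} = \brk{x \in G : x + B_0 \subseteq X} = \bigcap_{b \in B_0}(X - b),
\]
which is closed (an intersection of closed sets), is $B_0$-invariant, and is contained in $X$ (as $0\in B_0$). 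Since every non-trivial translated abelian subvariety contained in $X$ has the form $x + B_0$ with $B_0 \subseteq B$ positive-dimensional and $x \in T_{B_0}$, while conversely each $T_{B_0}$ is a union of such translates, one obtains the decomposition $Z_{\A}(X) = \bigcup_{B_0} T_{B_0}$ as a countable union of closed subsets of $X$, indexed by the positive-dimensional abelian subvarieties $B_0$ of $B$.

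Finally, I would invoke the fact that an irreducible variety over an uncountable algebraically closed field is not a union of countably many proper closed subsets. Applying this to the irreducible $X = Z_{\A}(X) = \bigcup_{B_0} T_{B_0}$ produces a positive-dimensional $B_0$ with $T_{B_0} = X$, that is $X + B_0 = X$; this is the desired subgroup and delivers the contradiction. The main obstacle is exactly this last step: the assumption $Z_{\A}(X) = X$ only says that every closed point of $X$ lies on \emph{some} translated abelian subvariety contained in $X$, not that $X$ itself is one, so collapsing the family to a single stabilizing subgroup genuinely requires both the countability of directions (which is why the largest abelian subvariety of \autoref{lemma:largestabeliangeneral} is needed) and the uncountability of the base field (supplied by the reduction above). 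A secondary point to check is that $q(X)$ is closed, so that $X$ is literally the pullback of a closed subset of $G/B_0$; this holds because $q$ is faithfully flat and $X$ is $B_0$-saturated.
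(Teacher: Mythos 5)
Your proposal is correct, and at its core it is the same mechanism the paper relies on: the paper's proof passes to the quotient $G \to G/B$ by the largest abelian subvariety, extends it over the equivariant completions, defines a fiberwise Kawamata locus over $\overline{\pi}(\overline{X})$, and then simply cites Bogomolov's proof of the Kawamata structure theorem as "readily holding in this context." What you have done is unpack that citation into a self-contained argument: the sets $T_{B_0}=\bigcap_{b\in B_0}(X-b)$ are exactly the stabilizer-type loci driving Bogomolov's argument, the countability of directions is supplied by \autoref{lemma:largestabeliangeneral} together with the countability of abelian subvarieties of a fixed abelian variety, and the uncountable-field Baire-type step forces some $T_{B_0}=X$, i.e.\ $B_0\subseteq\mathrm{Stab}(X)$, contradicting \autoref{defn:notfibered}. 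This bypasses the paper's detour through $\overline{G}\to\overline{G/B}$ entirely, which is arguably cleaner and makes explicit what "readily holds" means. One step deserves more care than you give it: in the reduction to uncountable $k$, the fact that each translate over $k$ base changes to a translate over $\Omega$ only shows that the union of the base-changed translates is \emph{dense} in $X_\Omega$; a dense countable union of proper closed subsets is perfectly possible over an uncountable field, so density alone does not let you run the pigeonhole step. The fix is to use \autoref{prop:KawamataequalLanglike} over $\Omega$ as well: $Z_{\A}(X_\Omega)=\Exc'_{\A}(X_\Omega)$ is closed and contains the (dense) union of base-changed translates, hence equals $X_\Omega$ as a genuine union of translates, and only then does every $\Omega$-point of $X_\Omega$ lie in some $T_{B_0}$. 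With that one-line repair, and your observation that the stabilizer commutes with base change (so the positive-dimensional subgroup descends to $k$ via the identity component of $\mathrm{Stab}(X)$), the argument is complete.
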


\begin{proof}
By \autoref{prop:KawamataequalLanglike}, we have that $Z_{\A}(X) = \Exc'_{\A}(X)$, and hence it suffices to prove that $Z_{\A}(X)$ is a proper closed subset of $X$. 
Using \autoref{lemma:largestabeliangeneral}, let $B$ be the largest abelian subvariety of $G$. 
By definition, all of the algebraic subgroups comprising $Z_{\A}(X)$ are contained in $B$. 
Consider the quotient map $\pi \colon G \to G/B$, and note that by \autoref{thm:quotientsrepn}, $G/B$ exists as a commutative algebraic $k$-group and $\pi$ is a group homomorphism.

Using Subsection \ref{subsec:equivariantcompletion}, we may form equivariant completions of $\overline{G}$  and $\overline{G/B}$, and the same argument as in \cite[Lemma 5.7]{Vojta:Integralpoints2} allow us to extend $\pi$ as a fiber bundle to a $k$-morphism
\[
\overline{\pi}\colon \overline{G} \to \overline{G/B}. 
\]
Let $\overline{X}$ be the closure of $X$ in $\overline{G}$, and let $V$ be the closed subset $\overline{\pi}(\overline{X}) \subset \overline{G/B}$. 
In this setting, one can define the Kawamata locus of $\overline{X}$ by taking the union of the Kawamata locus of each fiber over $V$,  and moreover, the intersection of this set with $X$ is precisely $Z_{\A}(X)$.

To conclude, we note that Bogomolov's proof of the Kawamata structure theorem \cite[Section 1.3 and proof of Theorem 1]{Bogomolov:PointsFiniteOrder} readily holds in this context, and this proof shows that since $X$ is not fibered by subgroups (\autoref{defn:notfibered}), $Z_{\A}(X)$ is a proper closed subset of $X$. 
\end{proof}

We are now in a position to prove \autoref{thmx:main1} and \autoref{corox:main4}. We recall the statement of these results. 

\begin{subtheorem}[= \autoref{thmx:main1}]
Let $k$ be an algebraically closed field of characteristic zero, let $G$ be a commutative algebraic group over $k$ with unipotent rank 1 and toric rank 0, and let $X$ be a closed subvariety of $G$. 
If $X$ is not fibered by subgroups, then the Kawamata locus of $X$ is a proper closed subset of $X$. 
\end{subtheorem}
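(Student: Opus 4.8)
The plan is to assemble the three structural results already established for the individual pieces of the Kawamata locus and then to use the irreducibility of $X$ to conclude. Recall the decomposition \eqref{eqn:Kawamata},
\[
Z(X) = Z_{\uU}(X) \cup Z_{\T}(X) \cup Z_{\A}(X),
\]
into the unipotent, toric, and abelian Kawamata loci. Since $G$ has toric rank $0$, \autoref{lemma:toricKawamata} gives that $Z_{\T}(X)$ is empty, so it suffices to control the remaining two pieces under the hypothesis that $X$ is not fibered by subgroups.

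First I would invoke \autoref{lemma:unipotentKLclosed}: since $X$ is not fibered by subgroups, the unipotent Kawamata locus $Z_{\uU}(X)$ is a proper closed subscheme of $X$. Next I would invoke \autoref{prop:abelianKawamataproper}, which (again using that $X$ is not fibered by subgroups) shows that $Z_{\A}(X) = \Exc'_{\A}(X)$ is a proper closed subset of $X$. At this point $Z(X) = Z_{\uU}(X)\cup Z_{\A}(X)$ is a finite union of closed sets, hence closed (this is also recorded in \autoref{thm:Kawamataclosed}), and each summand is proper.

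The final step is to conclude that this union is itself a \emph{proper} closed subset. Here I would use that $X$ is a subvariety and therefore irreducible: an irreducible scheme cannot be written as the union of two proper closed subsets, so $Z_{\uU}(X)\cup Z_{\A}(X)\neq X$. Equivalently, each proper closed subset of the irreducible $X$ has dimension strictly less than $\dim X$, whence so does their union, forcing it to be proper. This completes the argument.

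I expect the genuine mathematical content to be contained entirely in the two prior results \autoref{lemma:unipotentKLclosed} and \autoref{prop:abelianKawamataproper}, so the assembly here is formal. The one point requiring a moment of care is the last step: the union of two proper closed subsets is proper \emph{only} because $X$ is irreducible, which is exactly where the convention that a subvariety is integral (in particular irreducible) enters.
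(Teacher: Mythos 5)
Your proposal is correct and follows exactly the same route as the paper, whose proof of \autoref{thmx:main1} is precisely the one-line assembly of \autoref{lemma:unipotentKLclosed}, \autoref{lemma:toricKawamata}, and \autoref{prop:abelianKawamataproper}. Your explicit remark that the final union of two proper closed subsets is proper because $X$ is irreducible (being integral by the paper's conventions) is a detail the paper leaves implicit, and it is the right justification.
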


\begin{proof}[Proof of \autoref{thmx:main1}]
The result follows from \autoref{lemma:unipotentKLclosed},  \autoref{lemma:toricKawamata},  and \autoref{prop:abelianKawamataproper}. 
\end{proof}

\begin{subcorollary}[= \autoref{corox:main4}]
Let $k$ be an algebraically closed field of characteristic zero, let $G$ be a commutative algebraic $k$-group with unipotent rank 1 and toric rank 0, and let $X$ be a closed subvariety of $G$. 
If $X$ is not fibered by subgroups, then $X$ is pseudo-groupless. 
\end{subcorollary}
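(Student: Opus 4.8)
The plan is to exhibit the Kawamata locus $Z(X)$ itself as the proper closed subscheme witnessing pseudo-grouplessness, and to verify directly that $X$ is groupless modulo $Z(X)$. Concretely, I would set $\Delta = Z(X)$ in \autoref{defn:grouplessandpseudo}.(3) and check the two hypotheses separately: that $\Delta$ is proper, and that $X$ is groupless modulo $\Delta$.

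First I would invoke \autoref{thmx:main1}: since $X$ is not fibered by subgroups, the Kawamata locus $Z(X)$ is a proper closed subscheme of $X$. This supplies the candidate proper $\Delta$. Next, using \autoref{thmx:main0}, I would identify $Z(X)$ with the Lang-like exceptional locus $\Exc'(X)$, so that it suffices to show $X$ is groupless modulo $\Exc'(X)$. Because $X$ is a closed subvariety of an algebraic $k$-group, \autoref{lemma:grouptestgroup} reduces this verification to morphisms out of $\mG_{m,k}$ and out of abelian $k$-varieties: I must check that every non-constant $k$-morphism $\mG_{m,k}\to X$ and every non-constant $k$-morphism $A\to X$ (for $A$ an abelian $k$-variety) has image contained in $\Exc'(X)$. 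This, however, is immediate from \autoref{lemma:LanglikeGmAV}, which describes $\Exc'(X)$ precisely as the Zariski closure of the union of the images of all such morphisms; each individual image therefore lies inside $\Exc'(X)$ and hence factors over it. Combining these points, $X$ is groupless modulo the proper closed subscheme $\Exc'(X)=Z(X)$, so $X$ is pseudo-groupless.

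I do not expect any genuine obstacle in this argument: the substantive work—establishing the equality $\Exc'(X)=Z(X)$ and the properness of $Z(X)$—has already been carried out in \autoref{thmx:main0} and \autoref{thmx:main1}. The present statement is a formal consequence, obtained by matching the definition of the Lang-like exceptional locus (\autoref{defn:Langlike}) against the testing criterion for grouplessness modulo $\Delta$. The only point requiring a moment of care is to note that the two reduction lemmas, \autoref{lemma:grouptestgroup} and \autoref{lemma:LanglikeGmAV}, range over exactly the same class of test objects ($\mG_{m,k}$ and abelian $k$-varieties mapping to a closed subvariety of an algebraic group), which is what makes the comparison tautological rather than merely suggestive.
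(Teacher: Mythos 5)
Your argument is correct and is essentially the paper's own proof (which cites \autoref{thmx:main0}, \autoref{thmx:main1}, and the testing criterion for grouplessness modulo a closed subscheme), merely spelled out in more detail. Your explicit matching of \autoref{lemma:grouptestgroup} against \autoref{lemma:LanglikeGmAV} is exactly the implicit final step in the paper's one-line proof.
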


\begin{proof}
This follows from \autoref{thmx:main0}, \autoref{thmx:main1}, and the equivalent characterization of groupless modulo $\Delta$ from \autoref{lemma:testgroupless}.(2). 
\end{proof}

\section{\bf Non-Archimedean Green--Griffiths--Lang--Vojta for commutative algebraic groups with unipotent rank 1}
\label{sec:NAGGLV}
In this section,  we will prove \autoref{thmx:main2} and the corresponding corollaries. 
Recall our conventions for fields established in Subsection \ref{subsec:fields}, in particular, $K$ is assumed to be an algebraically closed, complete, non-Archimedean non-trivially valued field of characteristic zero, and for a $K$-variety $X$, $X^{\an}$ denotes the Berkovich analytification of $X$. 

\subsection{Proof of \autoref{thmx:main2}}
First, we prove an extension result for $K$-analytic morphisms from a big, open subspace of a smooth $K$-analytic space to the analytification of an algebraic $K$-group.

\begin{subprop}\label{prop:extendinganalytic}
Let $Z$ be a smooth $K$-variety,  let $\sU \subset Z^{\an}$ be an open $K$-analytic space with $\codim(Z^{\an}\setminus \sU) \geq 2$, and let $G$ be an algebraic $K$-group. 
Then any $K$-analytic morphism $\sU \to G^{\an}$ uniquely extends to an analytic morphism $Z^{\an} \to G^{\an}$. 
\end{subprop}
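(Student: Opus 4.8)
The plan is to reduce, via Chevalley's structure theorem (\autoref{thm:Chevalley}), to extending separately into the affine part and the abelian part of $G$, handling the affine factors by a removable-singularity theorem for analytic functions and the abelian factor by a homogeneity argument in the spirit of Weil. I would first record the trivial but useful observations of uniqueness and locality: since $\codim(Z^{\an}\setminus \sU)\geq 1$, the open $\sU$ is dense in the reduced space $Z^{\an}$, and as $G^{\an}$ is separated any two extensions of a given $K$-analytic morphism that agree on $\sU$ agree everywhere. Consequently the existence problem is local on $Z^{\an}$, and by uniqueness it suffices to produce extensions over the members of a suitable (analytic or \'etale) cover of $Z^{\an}$ and glue them.

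Chevalley's theorem provides a short exact sequence $0\to H\to G\to A\to 0$ with $H$ linear (hence affine) and $A$ an abelian $K$-variety, and I would treat the affine case first. If $G$ is affine, embed $G\hookrightarrow \GL_N\hookrightarrow \mathbb{A}^{N^2}\times \mathbb{A}^1$, the last coordinate recording the inverse determinant. A $K$-analytic morphism $\sU\to G^{\an}$ is then a tuple of analytic functions on $\sU$, and each extends uniquely across $Z^{\an}\setminus\sU$ by the removable-singularity (Hartogs-type) theorem on the smooth, hence normal, space $Z^{\an}$: sections of $\sO$ extend across a closed subset of codimension $\geq 2$. The extended determinant cannot vanish, for its zero locus would be pure of codimension $1$ yet contained in the codimension-$\geq 2$ set $Z^{\an}\setminus\sU$, and is therefore empty; and the polynomial equations cutting out $G$ inside $\GL_N$ hold on the dense set $\sU$, hence everywhere, so the extension lands in $G^{\an}$.

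The crux is the abelian case: extending $\pi\circ f\colon \sU\to A^{\an}$ to a morphism $Z^{\an}\to A^{\an}$, where $\pi\colon G\to A$ is the quotient map. Here the coordinate-function method fails outright, since morphisms to projective targets genuinely acquire codimension-$2$ indeterminacy, so the group law of $A$ must be used in an essential way. I would adapt Weil's argument, the analytic analogue of \cite[Theorem 4.4.1]{BLR}: show that the locus of points of $Z^{\an}$ at which the analytic map to $A^{\an}$ fails to extend is either empty or \emph{pure} of codimension $1$, by exploiting translations on $A^{\an}$ together with the difference map $(x,y)\mapsto f(x)-f(y)$ to propagate the domain of definition from the dense set across the diagonal. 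Since this non-extension locus is contained in $Z^{\an}\setminus\sU$, which has codimension $\geq 2$, purity forces it to be empty, yielding the extension $\bar g\colon Z^{\an}\to A^{\an}$.

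Finally I would combine the two cases. Pulling back $G\to A$ along $\bar g$ produces a $K$-analytic $\underline{H}$-torsor $P\to Z^{\an}$ of which $f$ is a section over $\sU$; as $H$ is smooth and affine, $P$ trivializes over a suitable cover of $Z^{\an}$, on each piece of which the section becomes a morphism to $H^{\an}$ that extends by the affine case, the local extensions gluing by the uniqueness established at the outset. I expect the main obstacle to be precisely the abelian case: making the analytic Weil/homogeneity argument rigorous in the Berkovich category (the difference map and, above all, the purity of the non-extension locus), and having at hand a clean removable-singularity theorem for $\sO$ across codimension $\geq 2$ on smooth Berkovich spaces.
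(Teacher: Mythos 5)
Your proposal is correct in outline and its mathematical content coincides with what the paper's proof ultimately rests on, but the two arguments are packaged quite differently. The paper does not carry out the Chevalley d\'evissage in the Berkovich category at all: it passes to Huber's adic spaces, observes that the linear part of $G$ (via \autoref{thm:Chevalley}) embeds as a closed subvariety of affine space so that \cite[Lemma 7.8]{morrow_nonArchCampana} applies, invokes the arguments of \cite[Proof of Theorem 7.4]{morrow_nonArchCampana} for the general case, and then returns to Berkovich spaces through the equivalence between Hausdorff strict $K$-analytic spaces and taut adic spaces locally of finite type (\cite{huber2}, \cite{henkel:comparision}). What that buys is that the two technical inputs you correctly identify as the crux --- a removable-singularity theorem for $\sO$ across closed subsets of codimension $\geq 2$ on smooth spaces, and a Weil-type extension theorem for analytic maps into the abelian quotient --- are inherited from the earlier adic-space paper rather than reproved; your route, by contrast, would be self-contained in the Berkovich category but obliges you to actually establish both, and the second one (purity of the non-extension locus for an analytic map to $A^{\an}$, proved via translations and the difference map) is a genuine piece of work that your sketch only gestures at. Two smaller remarks: your uniqueness/locality reduction and the torsor argument for recombining the linear and abelian parts are fine and implicitly present in the cited proofs; and for the invertibility of the determinant it is cleaner to note that $(\det)^{-1}$ also extends and the product of the two extensions is $1$ on the dense open $\sU$, hence everywhere, rather than to appeal to purity of zero loci a second time.
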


\begin{proof}
First, we describe how this result holds in the setting of Huber's adic spaces \cite{huber}. Let $Z^{\ad}$ denote the adic space associated to $Z$, and let $\sU^{\ad}$ be an open subspace of $Z^{\ad}$ with $\codim(Z^{\ad}\setminus \sU^{\ad}) \geq 2$.  
By \cite[Lemma 2.10]{morrow_nonArchCampana}, we have that $Z^{\ad}$ is smooth, separated,  irreducible, taut, locally strongly Noetherian, and locally of finite type adic space over $\Spa(K,K^{\circ})$. 
Note that the analytification of a linear algebraic $K$-group admits a closed embedding into $\mA^{n,\ad}_K$ for some $n\geq 0$ (cf.~\cite{Maculan:Stein}), and hence the proof of \cite[Lemma 7.8]{morrow_nonArchCampana} works for a general linear algebraic $K$-group.  
Moreover, using \autoref{thm:Chevalley}, we have that the arguments from \cite[Proof of Theorem 7.4]{morrow_nonArchCampana} carry over to the setting of an general algebraic $K$-group.  
In particular, we have that any $K$-analytic morphism $\sU^{\ad} \to G^{\ad}$ uniquely extends to an analytic morphism $Z^{\ad} \to G^{\ad}$, where $G^{\ad}$ is the adic space associated to $G$.

To translate the result to Berkovich $K$-analytic space,  let $Z^{\an}$ denote the Berkovich $K$-analytic space associated to $Z$, let $\sU$ be an open $K$-analytic space of $Z^{\an}$ with $\codim(Z^{\an}\setminus \sU) \geq 2$, and let $G^{\an}$ denote the Berkovich $K$-analytic space associated to $G$. 
Suppose $\varphi\colon \sU \to G^{\an}$ is a $K$-analytic morphism. 
By \cite[Proposition 8.3.1]{huber2} and \cite[Theorem 0.1]{henkel:comparision}, there is an equivalence between the category of Hausdorff strict Berkovich $K$-analytic spaces and the category of taut adic spaces locally of finite type over $\Spa(K,K^{\circ})$, and hence the morphism $\varphi$ uniquely corresponds to a morphism of adic spaces $\varphi^{\ad}\colon \sU^{\ad}\to G^{\ad}$.  
Furthermore, by \cite[p.~425]{huber2}, we have that $\sU^{\ad}$ is open (and partially proper) in $Z^{\ad}$ and by \cite[Proposition 1.8.11]{huber2} and \cite[Proposition 3.3.1]{BerkovichSpectral}, we have that $\codim(Z^{\ad}\setminus \sU^{\ad}) \geq 2$.  Therefore, our first argument shows that $\varphi^{\ad}$ uniquely extends to a morphism $Z^{\ad} \to G^{\ad}$, and hence  \cite[Proposition 8.3.1]{huber2} and \cite[Theorem 0.1]{henkel:comparision} again tells us that this uniquely corresponds to a morphism of Berkovich $K$-analytic spaces $\overline{\varphi}\colon  Z^{\an}\to G^{\an}$.  Moreover,  $\overline{\varphi}$ is the unique extension of the $K$-analytic morphism $\varphi\colon \sU \to G^{\an}$. 
\end{proof}

\begin{subcorollary}\label{coro:extendingtoalgebraic}
Let $A$ be an abelian $K$-variety,  let $\sU \subseteq A^{\an}$ be a dense open with $\codim(A^{\an}\setminus \sU) \geq 2$, and let $G$ be an algebraic $K$-group.
Then, any $K$-analytic morphism $\varphi\colon \sU \to G^{\an}$ uniquely extends to an algebraic $K$-morphism $\overline{\varphi}\colon A \to G$.
\end{subcorollary}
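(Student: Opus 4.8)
The plan is to prove this in two moves: first extend $\varphi$ across the codimension-two locus using \autoref{prop:extendinganalytic} to obtain a $K$-analytic morphism defined on all of $A^{\an}$, and then algebraize that analytic morphism by a GAGA argument that exploits the properness of $A$. Since $A$ is an abelian variety, it is smooth and proper over $K$, and $\sU\subseteq A^{\an}$ satisfies $\codim(A^{\an}\setminus\sU)\geq 2$. Thus \autoref{prop:extendinganalytic}, applied with $Z=A$, yields a unique $K$-analytic morphism $\psi\colon A^{\an}\to G^{\an}$ restricting to $\varphi$ on $\sU$. It remains to show that $\psi$ is the analytification of an algebraic morphism $A\to G$.

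For the algebraization I would pass to the graph. Algebraic groups are quasi-projective, so $G$ admits an open immersion $G\hookrightarrow\overline{G}$ into a projective $K$-variety $\overline{G}$. The graph $\Gamma$ of $\psi$ is a closed analytic subspace of $A^{\an}\times_K G^{\an}=(A\times_K G)^{\an}$, and the first projection restricts to an isomorphism $\Gamma\xrightarrow{\sim}A^{\an}$. Because $A$ is proper, $A^{\an}$ is compact, so $\Gamma$ is compact; since $(A\times_K G)^{\an}$ is an open subspace of the Hausdorff space $(A\times_K\overline{G})^{\an}$ and $\Gamma$ is disjoint from the boundary, $\Gamma$ is a closed analytic subspace of the proper analytic space $(A\times_K\overline{G})^{\an}$. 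By Berkovich analytic GAGA \cite[Corollary 3.4.10]{BerkovichSpectral}, applied to the projective $K$-variety $A\times_K\overline{G}$, the closed analytic subspace $\Gamma$ is the analytification of a unique closed subscheme $\Gamma_0\subseteq A\times_K\overline{G}$; and since $\Gamma$ lies in the open $(A\times_K G)^{\an}$, in fact $\Gamma_0\subseteq A\times_K G$.

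Finally, the first projection $\Gamma_0\to A$ is a morphism of proper $K$-schemes whose analytification is the isomorphism $\Gamma\xrightarrow{\sim}A^{\an}$; by the full faithfulness part of GAGA on proper schemes, the inverse analytic morphism is itself algebraic, so $\Gamma_0\to A$ is an isomorphism. Hence $\Gamma_0$ is the graph of an algebraic morphism $\overline{\varphi}\colon A\to G$ with $\overline{\varphi}^{\an}=\psi$, and in particular $\overline{\varphi}$ extends $\varphi$. Uniqueness of $\overline{\varphi}$ follows from the uniqueness in \autoref{prop:extendinganalytic} together with the faithfulness of analytification: two algebraic morphisms $A\to G$ agreeing on the dense open $\sU$ induce the same analytic morphism and hence coincide.

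The main obstacle is precisely the algebraization step, since the target $G$ is not proper and GAGA does not apply directly to analytic morphisms into $G$. The graph construction, combined with a projective compactification $\overline{G}$ and the compactness of $A^{\an}$, is what reduces the problem to GAGA for closed subspaces of the proper variety $A\times_K\overline{G}$; everything else (the extension across codimension two, and uniqueness) is formal once \autoref{prop:extendinganalytic} is in hand. As an alternative to compactifying $G$ directly, one could instead use \autoref{thm:Chevalley} to write $0\to H\to G\to A'\to 0$ and first algebraize the composite $A^{\an}\to A'^{\an}$ between abelian varieties by GAGA, but the graph argument above avoids having to separately treat the linear part $H$.
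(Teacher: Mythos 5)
Your proof is correct and follows the paper's route: first extend $\varphi$ to $A^{\an}$ via \autoref{prop:extendinganalytic}, then algebraize the resulting analytic morphism out of the proper space $A^{\an}$. The only difference is that the paper black-boxes the algebraization step as a citation to \cite[Lemma 2.15]{JVez}, whereas you re-derive that lemma by hand via the graph, a projective compactification of $G$, and Berkovich GAGA --- a correct, self-contained substitute for the same fact.
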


\begin{proof}
Since $A$ is smooth, $A^{\an}$ is smooth \cite[Proposition 3.4.6.(3)]{BerkovichSpectral}, and hence \autoref{prop:extendinganalytic} implies that the analytic $K$-morphism $\varphi\colon \sU \to G^{\an}$ uniquely extends to an analytic $K$-morphism $\overline{\varphi}\colon A^{\an} \to G^{\an}$. 
The result now follows from \cite[Lemma 2.15]{JVez}. 
\end{proof}

For the remainder of this section, $G$ will denote a commuative algebraic $K$-group with unipotent rank 1. 
By \autoref{defn:unipotent},  $G$ fits into a short exact sequence of commutative algebraic $K$-groups
\[
0 \to \mG_{a,K} \times \mG_{m,K}^{t} \to G \to A \to 0
\]
for some $t\in \mN$.  
Since $G^{\an}$ is smooth, \cite[Corollary 9.5]{BerkovichUniversalCover} tells us that $G^{\an}$ admits a topological universal cover $\widetilde{G}$.  Note that there is a structure of a commutative Berkovich $K$-analytic group on $\widetilde{G}$ which makes $\pi\colon \widetilde{G}\to G^{\an}$ into a $K$-analytic homomorphism.  

To analyze $K$-analytic morphism $\varphi\colon \mG_{m,K}^{\an}\to G^{\an}$,  note that $\mG_{m,K}^{\an}$ is simply connected \cite[Section 6.3]{BerkovichSpectral}, and so any such $\varphi$ uniquely lifts to a $K$-analytic morphism $\widetilde{\varphi}\colon \mG_{m,K}^{\an} \to \widetilde{G}$. 

\begin{subprop}\label{lemma:LiftImageContained}
Let $G$ be a commutative algebraic $K$-group with unipotent rank 1, and let $\widetilde{G}$ be the universal cover of the Berkovich analytification $G^{\an}$. 
For any $K$-analytic morphism $\varphi\colon \mG_{m,K}^{\an} \to G^{\an}$,  the image of the lift $\widetilde{\varphi}\colon  \mG_{m,K}^{\an} \to \widetilde{G}$ of $\varphi$ is contained in an algebraic subgroup $F^{\an}$ of $\widetilde{G}$, and moreover, $F^{\an}$ is isomorphic to $\mG_{a,K}^{\an} \times \mG_{m,K}^{s,\an}$ where $s$ is some non-negative integer. 
\end{subprop}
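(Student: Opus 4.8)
The plan is to describe the universal cover $\widetilde{G}$ explicitly as an analytic group extension and then constrain where the lift $\widetilde{\varphi}$ can go. Write $G$ as in \autoref{defn:unipotent}, namely $0\to H\to G\to A\to 0$ with $H=\mG_{a,K}\times\mG_{m,K}^{t}$ and $A$ an abelian $K$-variety. By Berkovich's uniformization theory \cite{BerkovichUniversalCover}, the universal cover $\widetilde{A}$ of $A^{\an}$ is the analytification $E^{\an}$ of the Raynaud semiabelian variety $E$, which sits in an \emph{algebraic} exact sequence $0\to T\to E\xrightarrow{\sigma}B\to 0$ in which $T\cong\mG_{m,K}^{r}$ is a split torus ($r$ the toric rank of the reduction of $A$) and $B$ is an abelian variety with good reduction, the covering $E^{\an}\to A^{\an}$ being the non-Archimedean uniformization. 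Since $B^{\an}$ (good reduction) and $T^{\an}$ are contractible, $E^{\an}$ is simply connected; the fibres $H^{\an}$ of $G^{\an}\to A^{\an}$ are likewise contractible. Hence the pullback $\widetilde{G}:=G^{\an}\times_{A^{\an}}E^{\an}$ is simply connected and covers $G^{\an}$, so it is the universal cover, and the projection $\rho\colon\widetilde{G}\to E^{\an}$ realizes $\widetilde{G}$ as an analytic extension $0\to H^{\an}\to\widetilde{G}\xrightarrow{\rho}E^{\an}\to 0$.

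Next I would localize the image of $\widetilde{\varphi}$ over the torus. Consider the composite $\sigma\circ\rho\circ\widetilde{\varphi}\colon\mG_{m,K}^{\an}\to B^{\an}$. Because $B$ has good reduction, a theorem of Cherry \cite{Cherry} forces every $K$-analytic map $\mG_{m,K}^{\an}\to B^{\an}$ to be constant. After translating $\varphi$ so that $\widetilde{\varphi}$ sends the point $1\in\mG_{m,K}(K)$ to the identity---harmless for the conclusion, and standard for such group maps since $K$-analytic morphisms preserve $K$-points---this constant value is $0$, so $\rho\circ\widetilde{\varphi}$ factors through the fibre $\sigma^{-1}(0)=T^{\an}\cong\mG_{m,K}^{r,\an}$. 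Consequently the image of $\widetilde{\varphi}$ lies in the analytic subgroup $F^{\an}:=\rho^{-1}(T^{\an})\subseteq\widetilde{G}$, which is an extension $0\to H^{\an}\to F^{\an}\to\mG_{m,K}^{r,\an}\to 0$.

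It then remains to identify $F^{\an}$. Viewing $F^{\an}\to\mG_{m,K}^{r,\an}$ as an $H^{\an}=\mG_{a}^{\an}\times\mG_{m}^{t,\an}$-torsor over $\mG_{m,K}^{r,\an}$, I would split the two factors apart using \autoref{lemma:productrepresent} and then invoke \autoref{lemma:Gmtorsors}, which asserts that $\mG_{a}^{\an}$-torsors and $\mG_{m}^{t,\an}$-torsors over $\mG_{m}^{r,\an}$ are trivial. This exhibits $F^{\an}\cong\mG_{a,K}^{\an}\times\mG_{m,K}^{s,\an}$ with $s=t+r$, an algebraic subgroup of $\widetilde{G}$ of the asserted form, and the proposition follows.

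The main obstacle is the group-theoretic (as opposed to merely torsor-theoretic) content of this last step: \autoref{lemma:Gmtorsors} trivializes the underlying torsor and so gives an isomorphism of analytic spaces, but to conclude that $F^{\an}$ is the analytification of $\mG_{a,K}\times\mG_{m,K}^{s}$ as a \emph{group} one must see that the extension itself splits, i.e. that its classifying symmetric $2$-cocycle---a global analytic function, respectively unit, on $\mG_{m,K}^{2r,\an}$---is a coboundary. Here I would check that global units on $\mG_{m,K}^{2r,\an}$ are monomials and global functions are everywhere-convergent Laurent series, so that the explicit coboundaries witnessing $\operatorname{Ext}(\mG_m^{r},\mG_a)=\operatorname{Ext}(\mG_m^{r},\mG_m)=0$ in the algebraic category converge and apply verbatim. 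The other delicate input, used at the outset, is the precise identification $\widetilde{A}=E^{\an}$, for which I rely on Berkovich's description of the topological fundamental group and universal cover of an abelian variety.
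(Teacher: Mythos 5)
Your proposal is correct and rests on the same three pillars as the paper's proof --- non-Archimedean uniformization of $A^{\an}$, Cherry's theorem that analytic maps from $\mG_{m,K}^{\an}$ to an abelian variety with good reduction are constant, and the torsor-trivialization lemmas (\autoref{lemma:Gmtorsors}, \autoref{lemma:productrepresent}) --- but it is organized differently in a way worth noting. The paper obtains $\wt{\pi}\colon \wt{G}\to H^{\an}$ abstractly from the universal property of the universal cover, and as a result must run a case analysis on whether the lifted linear part $\psi_2=\wt{\pi}\circ\wt{\iota}$ and the map $\psi_1=\wt{\pi}\circ\wt{\varphi}$ are constant, and must separately identify $\ker(\wt{\pi})$ with $\mG_a^{\an}\times\mG_m^{t,\an}$; your explicit description of $\wt{G}$ as the fibered product $G^{\an}\times_{A^{\an}}E^{\an}$ makes the kernel of the projection to $E^{\an}$ equal to $H^{\an}$ by construction and renders the $\psi_2$ analysis unnecessary (indeed $\psi_2$ is always constant, being a lift of a constant map into the discrete lattice), at the modest cost of having to justify that the fibered product is simply connected, hence really is the universal cover. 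You also pull back over the full Raynaud torus $T$ rather than the paper's minimal torus $T_3$ containing the relevant images; this yields a possibly larger $s$, which the statement permits. Finally, you correctly isolate the one genuinely delicate point that the paper glosses over --- that trivializing the $H^{\an}$-torsor $F^{\an}\to\mG_{m,K}^{r,\an}$ gives an isomorphism of analytic spaces but not yet of groups, so one must check the classifying symmetric $2$-cocycle is a coboundary --- and your proposed resolution via the explicit description of global functions and units on $\mG_{m,K}^{2r,\an}$ is a reasonable way to close that gap; the group splitting does matter for the downstream coordinate-wise application of Cherry's results, so making it explicit is an improvement rather than a digression.
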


\begin{proof}
Let $\widetilde{A}$ be the universal covering of $A^{\an}$. By \cite[Uniformization Theorem 8.8]{BLII}, there exist a semi-abelian $K$-variety $H$ with $\widetilde{A} \cong H^{\an}$, an abelian $K$-variety $B$ with good reduction over $\cO_K$, a split torus $T_2\subset H$, and a short exact sequence of commutative group schemes
\[
0\to T_2\to H\to B\to 0.
\]  
By the universal property of universal covering spaces, the surjective homomorphism $\pi\colon G^{\an}\to A^{\an}$ lifts uniquely to a homomorphism $\wt{\pi}\colon \widetilde{G}\to H^{\an}$.  
Since $\mG_a^{\an} \times \mG_{m}^{t,\an}$ is simply-connected \cite[Section 6.3]{BerkovichSpectral} and \cite[Lemma 3.9]{Lepage:Tempered}, the subgroup $\mG_a^{\an} \times \mG_{m}^{t,\an}\subset G^{\an}$ lifts to a subgroup $\mG_a^{\an} \times \mG_{m}^{t,\an}\subset \widetilde{G}$. 
We summarize the setting in the following diagram;
\begin{figure}[h!]
\[
\begin{tikzcd}
{} & \mG_{m}^{\an} \arrow{r}{\wt{\varphi}} \arrow[near end]{rd}{\varphi} & \wt{G}\arrow[two heads]{d}{\phi_G} \arrow[two heads]{r}{\wt{\pi}} & H^{\an}\arrow[two heads]{d}{\phi_A}&  \\
0 \arrow{r} & \mG_a^{\an} \times \mG_{m}^{t,\an} \arrow[right hook->]{r}{\iota} \arrow[right hook->, near end]{ru}{\wt{\iota}} & G^{\an} \arrow[two heads]{r}{\pi} & A^{\an} \arrow{r} & 0.
\end{tikzcd}
\]
\caption{Summary of $K$-analytic morphisms}
\label{fig1}
\end{figure}
note that each triangle and square in this diagram commutes. 

Let $\psi_1\colon \mathbb{G}_m^{\an}\to \wt{G}\to H^{\an}$ denote the morphism $\wt{\pi}\circ \wt{\varphi}$ and consider its image. 
Since $B^{\an}$ has good reduction, \cite[Theorem 3.2]{Cherry} implies that the morphism $\mathbb{G}_m^{\an}\to H^{\an} \to B^{\an}$ is constant
Moreover, this implies that the image of $\mathbb{G}_m^{\an}$ in $H^{\an}$ lands inside its torus $T_2^{\an}$ (up to translation).
If the image of $\mathbb{G}_m^{\an}$ inside of this torus is a point i.e., the morphism $\psi_1$ is constant, then the above diagram says that the image of $ \mG_{m,K}^{\an} \to G^{\an} \to A^{\an}$ is constant, and hence the image $\varphi(\mG_{m,K}^{\an})$ lies inside of a translate of the kernel of $\pi$, which is equal to $\mG_a^{\an} \times \mG_{m}^{t,\an}$ by definition. 
Since $\mG_a^{\an} \times \mG_{m}^{t,\an}$ lifts to a subgroup of $\wt{G}$,  the result follows.

For the remainder of the proof, we assume that the morphism $\psi_1\colon \mathbb{G}_m^{\an}\to \wt{G}\to H^{\an}$ is non-constant. 
By the above, this yields a $K$-analytic morphism $\psi_1\colon \mG_m^{\an} \to T_2^{\an}$, and we note that \cite[Proposition 3.4]{Cherry}  implies that this morphism is in fact algebraic. 

Let $\psi_2\colon \mG_a^{\an} \times \mG_{m}^{t,\an}\to \wt{G}\to H^{\an}$ denote the morphism $\wt{\pi}\circ \wt{\iota}$ and consider its image. 
By the above argument and \cite[Theorem 3.5]{Cherry}, there is no non-constant $K$-analytic morphism from $\mG_a^{\an} \times \mG_{m}^{t,\an} \to H^{\an} \to B^{\an}$, and hence $\psi_2\colon \mG_a^{\an} \times \mG_{m}^{t,\an}\to H^{\an}$ factors over $T_{2}^{\an}$.

If $\psi_2$ is constant, then we have that a translate of $\mG_a^{\an} \times \mG_{m}^{t,\an}$ is contained in the kernel of the $K$-analytic group homomorphism $\wt{\pi}\colon \wt{G}\to H^{\an}$. 
We claim that $\mG_a^{\an} \times \mG_{m}^{t,\an}$ is in fact isomorphic to this kernel. 
For the reader's convenience, we reproduce part of the above diagram below:
\[
\begin{tikzcd}
{}& {} & \wt{G}\arrow[two heads]{d}{\phi_G} \arrow[two heads]{r}{\wt{\pi}} & H^{\an}\arrow[two heads]{d}{\phi_A}&  \\
0\arrow{r} & \mG_a^{\an} \times \mG_{m}^{t,\an} \arrow[right hook->]{r}{\iota} \arrow[right hook->, near end]{ru}{\wt{\iota}} & G^{\an} \arrow[two heads]{r}{\pi} & A^{\an} \arrow{r} & 0.
\end{tikzcd}
\]
Let $K = \ker(\wt{\pi})$. 
Since $\phi_A$ is a group homomorphism, we have that $\phi_A(\wt{\pi}(K)) = e_{A^{\an}}$ where $e_{A^{\an}}$ is the identity element of $A^{\an}$. 
The commutativity of the right square in the above diagram implies that $\pi(\phi_G(K)) = e_{A^{\an}}$. 
Since $\mG_a^{\an} \times \mG_{m}^{t,\an} \cong \ker(\pi)$ and a translate of $\mG_a^{\an} \times \mG_{m}^{t,\an}$ is contained in $K$, this implies that $\phi_G(K) $ is isomorphic to $ \mG_a^{\an} \times \mG_{m}^{t,\an}$. 
Now we have that $\phi_G^{-1}(\phi_G(K)) $ is isomorphic to $ \phi_G^{-1}(\mG_a^{\an} \times \mG_{m}^{t,\an}) $, which is isomorphic to $ \mG_a^{\an} \times \mG_{m}^{t,\an}$ since $\phi_G$ is an isomorphism when restricted to $\mG_a^{\an} \times \mG_{m}^{t,\an}$. 
Since $\phi_G^{-1}(\phi_G(K)) \supset K$,  this implies that an isomorphic copy of $\mG_a^{\an} \times \mG_{m}^{t,\an} $ contains $K$, and hence our claim follows.

If $\psi_2$ is constant,  let $T_3 $ denote the image of the algebraic morphism $\psi_1$, which is isomorphic to a non-trivial split torus. 
Then, $F := \wt{\pi}^{-1}(T_3^{\an})$ is a closed subgroup of $\wt{G}$ of the form
\[
0 \to \mG_a^{\an} \times \mG_{m}^{t,\an}  \to F \to T_3^{\an} \to 0,
\]
and we have that $\wt{\varphi}(\mG_{m}^{\an})$ is contained in $F$. 
Note that $F$ can be made into a $\underline{ \mG_a^{\an} \times \mG_{m}^{t,\an}}$-torsor over $T_3^{\an}$ by considering the action of left translation by $ \mG_a^{\an} \times \mG_{m}^{t,\an}$. 
By \autoref{lemma:Gmtorsors} and \autoref{lemma:productrepresent}, we have that $F$ can be realized as a closed subgroup of $\mG_a^{\an} \times T_4^{\an}$ where $T_4^{\an} \cong \mG_{m}^{t,\an} \times T_3^{\an}$. 
Furthermore, since all of the torsors and maps are algebraic, we also that $F$ is an algebraic $K$-analytic subgroup of $\mG_a^{\an} \times T_4^{\an}$, and hence the result follows.

Finally, assume that $\psi_2$ is non-constant.  
\cite[Proposition 3.3]{Cherry} asserts that every $K$-analytic morphism from $\mG_{a}^{\an} \to T_2^{\an}$ is constant, and so \cite[Proposition 3.4]{Cherry} implies that $\mG_a^{\an} \times \mG_{m}^{t,\an}\to T_2^{\an}$ is algebraic. 
From the above, we have that the images of $\psi_1$ and $\psi_2$ are both translates of split tori. 
Let $T_3$ be the smallest dimension split torus of $T_2$ which contains both the images of $\psi_1$ and $\psi_2$ (after translation). 
To conclude, one can consider $\wt{\pi}^{-1}(T_3^{\an})$, and the same argument from above gives us the desired result.
\end{proof}

\begin{sublemma}\label{lemma:ZCtranslate}
Let $G$ be a commutative algebraic $K$-group with unipotent rank 1, and let $\widetilde{G}$ be the universal cover of the Berkovich analytification $G^{\an}$. 
For any $K$-analytic morphism $\varphi\colon \mG_{m,K}^{\an} \to G^{\an}$,  the $K$-analytic closure of the image of the lift $\widetilde{\varphi}\colon  \mG_{m,K}^{\an} \to \widetilde{G}$ of $\varphi$ is the translate of an algebraic subgroup of $\widetilde{G}$.  
\end{sublemma}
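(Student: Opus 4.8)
The plan is to deduce the statement from \autoref{lemma:LiftImageContained} together with Cherry's closure lemma \autoref{lemma:analyticlosure}, analyzing the image one coordinate at a time inside the concrete group $\mG_a^{\an}\times\mG_m^{s,\an}$. First I would normalize: after post-composing $\wt{\varphi}$ with a translation of $\widetilde{G}$ we may assume $\wt{\varphi}(1)=e$, the identity of $\widetilde{G}$; since a translate of the image has a translate of the original closure as its $K$-analytic closure, it suffices to prove that the closure $W$ of $\wt{\varphi}(\mG_{m,K}^{\an})$ is an \emph{algebraic subgroup} of $\widetilde{G}$. By \autoref{lemma:LiftImageContained} the image lies in a subgroup $F^{\an}\cong\mG_a^{\an}\times\mG_m^{s,\an}$ of $\widetilde{G}$, which after our translation contains $e$; thus $W\subseteq F^{\an}$ and it is enough to identify $W$ as an algebraic subgroup of $F^{\an}$.

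Next I would control the torus coordinate. Writing $\wt{\varphi}=(f,g)$ with $f\colon\mG_{m,K}^{\an}\to\mG_a^{\an}$ and $g\colon\mG_{m,K}^{\an}\to\mG_m^{s,\an}$, the same rigidity used in \autoref{lemma:LiftImageContained} (every everywhere-invertible analytic function on $\mG_{m,K}^{\an}$ is a monomial, so that maps to a torus are algebraic) shows that $g$ is algebraic, and with the normalization $g(1)=e$ it is the homomorphism $t\mapsto(t^{n_1},\dots,t^{n_s})$. In particular $g$ is a group homomorphism whose image has analytic Zariski closure equal to the subtorus $T'\subseteq\mG_m^{s,\an}$ that it cuts out, so the projection of $W$ to $\mG_m^{s,\an}$ is contained in $T'$ and hence $W\subseteq\mG_a^{\an}\times T'$.

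It remains to upgrade $W$ to a subgroup, and this is where I expect the real difficulty to lie. The tool is Cherry's lemma \autoref{lemma:analyticlosure}, which says that the analytic Zariski closure of an abstract subgroup of $\widetilde{G}(K)$ is an analytic subgroup; the plan is to show that the closure of the subgroup generated by the image already coincides with $W$ and is algebraic. The crux is that, unlike the torus direction, analytic maps $\mG_{m,K}^{\an}\to\mG_a^{\an}$ are \emph{not} rigid—$f$ may well be transcendental—so the additive coordinate cannot be read off coordinate-wise and must be pinned down by other means. Here I would exploit that there is no nonconstant analytic homomorphism $\mG_{m,K}^{\an}\to\mG_a^{\an}$ (a global logarithm fails to converge on all of $\mG_{m,K}^{\an}$), which should force the additive coordinate of the relevant subgroup to be either trivial or all of $\mG_a^{\an}$, so that $W$ is $\{0\}\times T'$ or $\mG_a^{\an}\times T'$; undoing the initial translation then exhibits the $K$-analytic closure as a translate of an algebraic subgroup of $\widetilde{G}$. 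The single hardest point—the place I would expect to spend the most care—is precisely this passage from the containment $W\subseteq\mG_a^{\an}\times T'$ to the exact determination of $W$: controlling a possibly transcendental additive component and showing that the lifted image is genuinely group-like rather than merely the graph of $f$ over $T'$. This is the step where the special structure of the universal cover $\widetilde{G}$ and Cherry's rigidity theorems for maps into tori and abelian varieties must be combined, rather than the soft reductions used in the earlier steps.
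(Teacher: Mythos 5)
Your reduction to the subgroup $F^{\an}\cong\mG_a^{\an}\times\mG_m^{s,\an}$ and your treatment of the torus coordinates agree with the paper's proof: there, too, one writes $\widetilde{\varphi}=(\alpha,g_1,\dots,g_s)$ and invokes \cite[Proposition 3.4]{Cherry} to see that each $g_i$ is a monomial $c_iz^{d_i}$, so the image in $\mG_{m,K}^{s,\an}$ is a translate of a subtorus and in particular analytically closed. The divergence, and the gap, is in the additive coordinate. The paper pins down $\alpha$ with Cherry's non-Archimedean Picard theorem \cite[Proposition 1.1]{Cherry}: an analytic map $\mG_{m,K}^{\an}\to\mA^{1,\an}_K$ whose image omits two or more $K$-points is constant. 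Hence either $\alpha$ is constant, or $\alpha(\mG_{m,K}^{\an}(K))$ omits at most one $K$-point and the $K$-analytic closure of $\alpha(\mG_{m,K}^{\an})$ is all of $\mG_{a,K}^{\an}$; this dichotomy is exactly what yields the conclusion that the closure of the image is a translate of $\mG_{a,K}^{\an,i}\times\mG_{m,K}^{t',\an}$ with $i\in\{0,1\}$.

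Your substitute for this step does not close. First, \autoref{lemma:analyticlosure} applies to the analytic closure of an \emph{abstract subgroup}, and the subgroup generated by $\widetilde{\varphi}(\mG_{m,K}^{\an})$ is in general strictly larger than the image; knowing that the closure of the generated subgroup is an analytic subgroup tells you nothing about the closure $W$ of the image unless you first prove the two closures coincide, which is essentially the statement being proved — your proposal flags this but supplies no argument. Second, the non-existence of a nonconstant analytic homomorphism $\mG_{m,K}^{\an}\to\mG_{a,K}^{\an}$ constrains homomorphisms only; the additive component $f=\alpha$ is an arbitrary analytic function, possibly transcendental and in no way group-like, so that fact cannot by itself force the additive coordinate of $W$ to be $\{0\}$ or all of $\mG_{a,K}^{\an}$. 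The missing ingredient is precisely the value-omission rigidity of \cite[Proposition 1.1]{Cherry}, which controls the image of $\alpha$ directly with no homomorphism hypothesis. With that input your outline essentially reproduces the paper's argument; without it, the passage from $W\subseteq\mG_a^{\an}\times T'$ to the exact determination of $W$ — which you correctly identify as the crux — is not justified.
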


\begin{proof}
In \autoref{lemma:LiftImageContained}, we showed that the lift $\widetilde{\varphi}\colon  \mG_{m,K}^{\an} \to \widetilde{G}$ of $\varphi$ has image contained in $\mG_{a,K}^{\an} \times \mG_{m,K}^{s,\an}$ where $s$ is some non-negative integer.  
Moreover, we may write
\begin{align*}
\widetilde{\varphi}\colon \mG_{m,K}^{\an} & \longrightarrow \mG_{a,K}^{\an} \times \mG_{m,K}^{s,\an} \\
z & \longmapsto (\alpha(z),g_1(z),\dots,g_s(z)).
\end{align*}
First, we analyze the morphism $\alpha\colon \mG_{m,K}^{\an} \to \mG_{a,K}^{\an}$. 
Consider the set of $K$-points $\mG_{m,K}^{\an}(K)$ of $\mG_{m,K}^{\an}$, which is a dense subset in the $K$-analytic topology. 
If $\alpha(\mG_{m,K}^{\an}(K))$ is a subset of $\mG_{a,K}^{\an}(K) \setminus \{ p_1,p_2,\dots,p_n\}$ for distinct $p_i \in \mG_{a,K}^{\an}(K)$ and $n\geq 2$, then \cite[Proposition 1.1]{Cherry} implies that $\alpha$ is constant on $\mG_{m,K}^{\an}(K)$ and hence density of $\mG_{m,K}^{\an}(K)$ in $\mG_{m,K}^{\an}$ and the fact that $\mG_{m,K}^{\an}$ is Hausdorff  imply that $\alpha$ is constant on all of $\mG_{m,K}^{\an}$.  Otherwise,  we have that $\alpha(\mG_{m,K}^{\an}(K)) = \mG_{a,K}^{\an}(K) \setminus \brk{p}$ for some $p \in \mG_{a,K}^{\an}(K)$. 
Note that $\mG_{a,K}^{\an}(K) \setminus \brk{p}$ is dense in $\mG_{a,K}^{\an}$ with respect to the $K$-analytic topology, and hence the density of $\mG_{m,K}^{\an}(K)$ in $\mG_{m,K}^{\an}$
implies that the $K$-analytic closure of $\alpha(\mG_{m,K}^{\an})$ is $\mG_{a,K}^{\an}$.

To conclude, \cite[Proposition 3.4]{Cherry} tells us that  for each $i = 1,\dots,s$, the $K$-analytic morphism $g_i \colon \mG_{m,K}^{\an} \to \mG_{m,K}^{\an}$ is an algebraic morphism, and hence $g_i(z) = c_iz^{d_i}$ for some $c_i \in K$ and $d_i \in \mZ$.  
In particular, we have that the image of $\mG_{m,K}^{\an} \to \mG_{m,K}^{s,\an}$ is an algebraic $K$-subgroup, and hence closed in the $K$-analytic topology. 
Furthermore, we have that $K$-analytic closure of the image of the lift $\widetilde{\varphi}\colon  \mG_{m,K}^{\an} \to \widetilde{G}$ of $\varphi$ is isomorphic to $\mG_{a,K}^{\an,i} \times \mG_{m,K}^{\an,t'}$ where $i \in \brk{0,1}$ and $0\leq t' \leq s$. 
\end{proof}

\begin{sublemma}\label{coro:ZCimage}
Let $G$ be a commutative algebraic $K$-group with unipotent rank 1, and let  $\varphi\colon \mG_{m,K}^{\an} \to G^{\an}$ be a $K$-analytic morphism.  
Then, the $K$-analytic closure of $\varphi(\mathbb{G}_m^{\an})$ is a $K$-analytic subgroup of $G^{\an}$.  
\end{sublemma}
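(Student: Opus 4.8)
The plan is to establish the statement by working on the universal cover $\widetilde{G}$, where \autoref{lemma:ZCtranslate} already describes the relevant closure explicitly, and then to transport the conclusion down to $G^{\an}$ via Cherry's lemma \autoref{lemma:analyticlosure}, which is the tool that directly delivers an analytic \emph{subgroup}. Since $\mG_{m,K}^{\an}$ is simply connected, $\varphi$ lifts uniquely to a $K$-analytic morphism $\widetilde{\varphi}\colon \mG_{m,K}^{\an}\to\widetilde{G}$ with $\varphi=\pi\circ\widetilde{\varphi}$, where $\pi\colon\widetilde{G}\to G^{\an}$ is the covering homomorphism introduced before \autoref{lemma:LiftImageContained}. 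The first step is to record that $\pi$ is an open $K$-analytic group homomorphism, so that it carries subgroups of $\widetilde{G}$ to subgroups of $G^{\an}$ and, together with the density of $K$-points, is compatible with forming analytic Zariski closures of images.

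By \autoref{lemma:LiftImageContained} the lift factors through a subgroup $\mG_{a,K}^{\an}\times\mG_{m,K}^{s,\an}\subseteq\widetilde{G}$, and by the proof of \autoref{lemma:ZCtranslate} it has the explicit coordinates $z\mapsto(\alpha(z),c_1z^{d_1},\dots,c_sz^{d_s})$, whose $K$-analytic closure is computed there. The key observation I would extract is that the toric coordinates $g_i(z)=c_iz^{d_i}$ are monomials, so on $K$-points the image of $z\mapsto(z^{d_1},\dots,z^{d_s})$ is the image of a group homomorphism $\mG_{m}\to\mG_{m}^{s}$; combined with the analysis of $\alpha$ in \autoref{lemma:ZCtranslate}, this pins down, on the dense set of $K$-points, the analytic Zariski closure of $\varphi(\mG_{m,K}^{\an})$ after pushing forward by the homomorphism $\pi$.

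With this in hand, the final and decisive step is Cherry's lemma \autoref{lemma:analyticlosure}. Since $\mG_{m,K}^{\an}(K)$ is dense in $\mG_{m,K}^{\an}$, the analytic Zariski closure of $\varphi(\mG_{m,K}^{\an})$ equals the analytic Zariski closure of the point set $\varphi(\mG_{m,K}^{\an}(K))\subseteq G(K)$. I would pass to the abstract subgroup $H\subseteq G(K)$ generated by these points; then \autoref{lemma:analyticlosure} gives that its analytic Zariski closure $\overline{H}$ is a $K$-analytic subgroup of $G^{\an}$, and it remains to identify $\overline{H}$ with the closure of $\varphi(\mG_{m,K}^{\an})$, using that the monomial description of the $g_i$ exhibits the generators of $H$ inside the closure of the image. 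I expect the main obstacle to be exactly this identification --- equivalently, verifying that the closure of the image is closed under the group law and contains the identity, so that it is a genuine subgroup and not merely a coset of one. This is precisely the point at which Cherry's lemma, rather than the translate description of \autoref{lemma:ZCtranslate}, must be brought to bear: the translate description only sees the image as a coset, whereas \autoref{lemma:analyticlosure} produces the subgroup structure intrinsically from the group generated by the $K$-points.
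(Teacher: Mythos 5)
There is a genuine gap, and it sits exactly where you route the argument through Cherry's lemma. The statement to be proved concerns the \emph{$K$-analytic closure} of $\varphi(\mG_{m,K}^{\an})$, i.e.\ the closure in the analytic topology, whereas \autoref{lemma:analyticlosure} only produces subgroup structure for the \emph{analytic Zariski} closure of an abstract subgroup of $G(K)$. The analytic Zariski topology is coarser, so its closures are in general strictly larger, and knowing that the analytic Zariski closure of some group of $K$-points is an analytic subgroup says nothing about the analytic closure of the image. In the paper the logical order is the reverse of yours: the analytic-Zariski statement is \autoref{thm:ZCalgebraicgroup}, which is \emph{deduced from} \autoref{coro:ZCimage} (and that is where \autoref{lemma:analyticlosure} is actually used, applied to the projection of the closure to $A^{\an}$), so your proposal would make the argument circular in the paper's architecture even if the topologies matched.

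Your fallback identification also fails on its own terms. By \autoref{lemma:ZCtranslate} the closed image upstairs is only a \emph{translate} $g\cdot H_0$ of a subgroup, so the abstract group $H$ generated by $\varphi(\mG_{m,K}^{\an}(K))$ is in general strictly larger than the image: concretely, if the $\mG_a$-coordinate $\alpha$ of the lift is a nonzero constant $a$, the closed image is $\{a\}\times(\text{coset of a torus})$, while $H$ contains points with $\mG_a$-coordinate $na$ for every $n\in\mZ$, so $\overline{H}$ strictly contains the closure of the image. Verifying $\overline{H}=\overline{\varphi(\mG_{m,K}^{\an})}$ amounts to already knowing that the closure of the image is a subgroup --- precisely the claim at issue. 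The paper's proof avoids all of this by staying in the analytic topology throughout: it pushes the closed translate-of-subgroup from \autoref{lemma:ZCtranslate} down by the covering homomorphism $\phi_G$ (with the harmless normalization absorbing the translate), obtaining an analytic subgroup of $G^{\an}$ whose closure is then again a subgroup by the elementary topological-group fact cited from Lang (closure of a subgroup of a topological group is a subgroup); note the extra closure step is needed because $\phi_G$ is a covering map, not a closed map, so the image of the closed set upstairs need not be closed. Cherry's lemma is simply the wrong tool at this stage.
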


\begin{proof}
Recall the $K$-analytic morphisms referenced in \autoref{fig1}. 
Note that $\varphi = \phi_G \circ \widetilde{\varphi}$, and so the $K$-analytic closure of $\varphi(\mathbb{G}_m^{\an})$ is isomorphic to the $K$-analytic closure of $\phi_G(\overline{\widetilde{\varphi}(\mG_{m,K}^{\an})})$ where $\overline{\widetilde{\varphi}(\mG_{m,K}^{\an})}$ is the $K$-analytic closure of the image of $\widetilde{\varphi}(\mG_{m,K}^{\an})$. 
\autoref{lemma:ZCtranslate} implies that $\overline{\widetilde{\varphi}(\mG_{m,K}^{\an})}$ is the translate of an algebraic subgroup of $\wt{G}$, and since $\phi_G$ is an analytic group homomorphism, we have that $\phi_G(\overline{\widetilde{\varphi}(\mG_{m,K}^{\an})})$ is an analytic subgroup of $G^{\an}$. 
Now, \cite[p.~84]{LangComplexHyperbolic} says that the $K$-analytic closure of $\phi_G(\overline{\widetilde{\varphi}(\mG_{m,K}^{\an})})$ is a $K$-analytic subgroup of $G^{\an}$, and therefore the $K$-analytic closure of $\varphi(\mathbb{G}_m^{\an})$ is a $K$-analytic subgroup of $G^{\an}$.  
\end{proof}

\begin{sublemma}\label{lemma:closuredensegroup}
Let $G$ be a commutative algebraic $K$-group with unipotent rank 1, and let  $\varphi\colon \mG_{m,K}^{\an} \to G^{\an}$ be a $K$-analytic morphism. 
Suppose that $\varphi\colon \mG_{m,K}^{\an} \to G^{\an} \to A^{\an}$ is Zariski dense (i.e., dense in the analytic Zariski topology).  
Then,  the $K$-analytic closure of $\varphi(\mG_{m,K}^{\an})$ is isomorphic to $G^{\an}$. 
\end{sublemma}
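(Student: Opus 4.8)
The plan is to set $\sH$ to be the $K$-analytic closure of $\varphi(\mG_{m,K}^{\an})$ in $G^{\an}$ and to prove that $\sH = G^{\an}$. By \autoref{coro:ZCimage}, $\sH$ is a $K$-analytic subgroup of $G^{\an}$; being a closed analytic subgroup it is in particular an analytic subvariety, hence closed for the analytic Zariski topology. Since $G^{\an}$ is the analytification of an integral variety, it is irreducible for the analytic Zariski topology, so a Zariski-closed Zariski-dense subset of $G^{\an}$ must equal $G^{\an}$. The whole lemma therefore reduces to the single assertion that $\sH$ is analytic-Zariski-dense in $G^{\an}$, equivalently that $\varphi(\mG_{m,K}^{\an})$ is analytic-Zariski-dense in $G^{\an}$.

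The first step is to see that the Zariski closure $\sZ$ of $\varphi(\mG_{m,K}^{\an})$, which is again an analytic subgroup by \autoref{lemma:analyticlosure}, surjects onto $A^{\an}$ under $\pi$. If instead $\sZ$ were contained in $\pi^{-1}(V)$ for some proper analytic subvariety $V\subsetneq A^{\an}$, then $\pi(\varphi(\mG_{m,K}^{\an}))\subseteq V$, contradicting the hypothesis that $\pi\circ\varphi$ is Zariski dense in $A^{\an}$; thus $\pi(\sZ)$ is Zariski dense in $A^{\an}$, and since a Zariski-dense analytic subgroup of an abelian variety is everything, $\pi$ restricts to a surjection $\sZ\twoheadrightarrow A^{\an}$.

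It remains to show that $\sZ$ fills the fibre direction, i.e.\ that $\sZ$ contains the kernel $\mG_{a,K}^{\an}\times\mG_{m,K}^{t,\an}$ of $\pi$; together with surjectivity onto $A^{\an}$ this gives $\sZ=G^{\an}$, and then $\sH$, being Zariski closed and Zariski dense, equals $G^{\an}$ as well. For this step I would pass to the universal cover $\wt{G}$ and the commutative diagram of \autoref{fig1}, writing $\varphi=\phi_G\circ\wt{\varphi}$ and invoking the explicit description of $\overline{\wt{\varphi}(\mG_{m,K}^{\an})}$ from \autoref{lemma:ZCtranslate}. The decisive input is the behaviour of the $\mG_a$-coordinate $\alpha\colon\mG_{m,K}^{\an}\to\mG_{a,K}^{\an}$ of the lift: as established in the proof of \autoref{lemma:ZCtranslate}, a non-constant such $\alpha$ already has analytically dense image, so the closure of the lift contains the full factor $\mG_{a,K}^{\an}$, whereas the toric coordinates are monomial maps whose closures are subtori. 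One then transports this information through $\phi_G$, using that the period lattice $\Gamma=\ker\phi_G$ maps isomorphically onto the period lattice of $A^{\an}$ (because the fibre $\mG_{a,K}^{\an}\times\mG_{m,K}^{t,\an}$ is simply connected, as recorded in \autoref{lemma:LiftImageContained}), to conclude that the saturation of $\overline{\wt{\varphi}(\mG_{m,K}^{\an})}$ by $\Gamma$ is dense in all of $\wt{G}$, whence $\sH=G^{\an}$.

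I expect the fibre-filling step to be the main obstacle. Surjectivity onto $A^{\an}$ is essentially formal, but controlling the kernel $\mG_{a,K}^{\an}\times\mG_{m,K}^{t,\an}$ genuinely uses the non-Archimedean geometry: the density of non-constant analytic maps into $\mG_{a,K}^{\an}$ (a phenomenon with no analogue for $\mG_{m,K}^{\an}$), combined with the precise compatibility of the two universal covers encoded in \autoref{fig1}, is what forces the closure to be all of $\wt{G}$ rather than a proper analytic subgroup projecting isomorphically onto $H^{\an}$. Keeping track of the extension classes relating $\wt{G}$, $H^{\an}$, and the lattice $\Gamma$, and ruling out the degenerate possibility that the lift misses the unipotent factor, is where the care will be required.
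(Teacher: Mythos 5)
Your proposal stops exactly where the proof has to happen. The fibre-filling step that you defer --- ``ruling out the degenerate possibility that the lift misses the unipotent factor'' --- is not a technicality to be handled with care at the end; it is the entire content of the lemma, and the mechanism you sketch cannot establish it from the stated hypotheses. The hypothesis is only that $\pi\circ\varphi$ is Zariski dense in $A^{\an}$, and this gives no control over the unipotent coordinate $\alpha\colon \mG_{m,K}^{\an}\to\mG_{a,K}^{\an}$ of the lift $\wt{\varphi}$: by \autoref{lemma:ZCtranslate} the closure of $\wt{\varphi}(\mG_{m,K}^{\an})$ is $\mG_{a,K}^{\an,i}\times\mG_{m,K}^{\an,t'}$ with $i\in\{0,1\}$, and the case $i=0$ is actually realized. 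For instance, take $G=\mG_a\times A$ with $A$ a Tate-uniformized abelian variety (so that there is a surjective analytic map $u\colon\mG_{m,K}^{\an}\to A^{\an}$) and $\varphi=(0,u)$: then $\pi\circ\varphi$ is Zariski dense, yet the closure of the lift misses the $\mG_a$-factor and its saturation by the period lattice $\Gamma$ is $\{0\}\times A^{\an}$, not all of $\wt{G}$. So the key intermediate claim of your third step is false in this situation, and no amount of bookkeeping of extension classes will recover it. You have, however, correctly located the load-bearing point: the paper's own proof takes a different route (it realizes the closure $F'$ as a $\underline{\mG_a^{\an}\times\mG_m^{t,\an}}$-torsor over $A^{\an}$, uses \autoref{lemma:abelianvarietytorsors} and \autoref{lemma:productrepresent} to see that $F'$ is algebraic, and concludes by the five lemma), but it likewise simply asserts that $F'\to A^{\an}$ has kernel $\mG_a^{\an}\times\mG_m^{t,\an}$ without argument. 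That assertion is precisely what needs proof, and it requires more input than Zariski density of $\pi\circ\varphi$.

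Two smaller points. First, your opening reduction treats the Berkovich-topological closure $\sH$ as an analytic subvariety, hence closed for the analytic Zariski topology; but \autoref{coro:ZCimage} only produces a subgroup closed for the Berkovich topology (via the topological-group argument from Lang), and a Berkovich-closed subset need not be locally cut out by analytic functions, so ``Zariski-closed and Zariski-dense'' is not yet available to you. Second, in the surjectivity step, Zariski density of $\pi(\sZ)$ together with \autoref{lemma:analyticlosure} gives that the \emph{Zariski closure} of $\pi(\sZ)$ is $A^{\an}$, not that $\pi(\sZ)=A^{\an}$; the set $\pi(\sZ)$ itself is not known to be an analytic subgroup, so surjectivity of $\pi|_{\sZ}$ needs a separate argument. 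Both of these are likely repairable; the gap in the first paragraph is not, at least not without strengthening the hypotheses or supplying a genuinely new input that forces the closure to contain the $\mG_a$-direction.
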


\begin{proof}
By \autoref{coro:ZCimage}, the $K$-analytic closure $F'$ of $\varphi(\mathbb{G}_m^{\an})$ is a $K$-analytic subgroup of $G^{\an}$. 
Since $\mathbb{G}_m^{\an}\to G^{\an} \to A^{\an}$ is Zariski dense, $F'$ dominates $A^{\an}$, and this analytic group homomorphism has kernel $\mG_a^{\an} \times \mG_{m}^{t,\an}$. Moreover, we have the following morphism of short exact sequences of analytic groups:
\[
\begin{tikzcd}
0\to \mG_a^{\an} \times \mG_{m}^{t,\an} \arrow{r}\arrow[equal]{d} & F' \arrow{r}\arrow{d}{f} & A^{\an} \arrow{r}\arrow[equal]{d} & 0 \\
0\to \mG_a^{\an} \times \mG_{m}^{t,\an} \arrow{r} & G^{\an} \arrow{r} & A^{\an} \arrow{r} & 0. 
\end{tikzcd}
\]
Note that $F'$ can be made into a $\underline{\mG_a^{\an} \times \mG_{m}^{t,\an}}$-torsor over $A^{\an}$ by considering the action of left translation by $\mG_a^{\an} \times \mG_{m}^{t,\an}$. 
By \autoref{lemma:abelianvarietytorsors} and \autoref{lemma:productrepresent}, we have that $F'$ is an algebraic $K$-analytic subgroup of $G^{\an}$. 
Moreover,  the short five lemma tells us that the morphism $f$ must be an isomorphism so the claim follows.
\end{proof}

\begin{subtheorem}\label{thm:ZCalgebraicgroup}
Let $G$ be a commutative algebraic $K$-group with unipotent rank 1. 
For any $K$-analytic morphism $\varphi\colon \mG_{m,K}^{\an} \to G^{\an}$, the Zariski analytic closure of $\varphi(\mG_{m,K}^{\an})$ is the analytification of a translate of an algebraic subgroup of $G$. 
\end{subtheorem}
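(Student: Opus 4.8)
The plan is to reduce to the situation covered by \autoref{lemma:closuredensegroup} and then upgrade the conclusion from the ordinary $K$-analytic closure to the analytic Zariski closure. First I would apply a translation to $G$ so that $\varphi(1) = e_G$; this is harmless, since a translate of an algebraic subgroup remains a translate of an algebraic subgroup. Writing $\pi\colon G^{\an}\to A^{\an}$ for the analytification of the quotient map and $\psi := \pi\circ\varphi$, set $F' := \overline{\varphi(\mG_{m,K}^{\an})}$, the closure taken in the ordinary analytic topology. By \autoref{coro:ZCimage}, $F'$ is a $K$-analytic subgroup of $G^{\an}$, and it contains $e_G$.

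The essential reduction is to replace $A$ by the analytic Zariski closure of the image of $\mG_m$. Since $F'$ is a subgroup and $\pi$ is a homomorphism, $\pi(F'(K))$ is an abstract subgroup of $A(K)$, so by Cherry's \autoref{lemma:analyticlosure} its analytic Zariski closure $A_0$ is an analytic subgroup of $A^{\an}$. As $A^{\an}$ is proper, Berkovich GAGA \cite[Corollary 3.4.10]{BerkovichSpectral} identifies the analytic subvariety $A_0$ with the analytification of an algebraic subgroup of $A$; being connected through $e_A$, this is an abelian subvariety, for which I continue to write $A_0$. Comparing closures, $\varphi(\mG_{m,K}^{\an})$ is analytically dense in $F'$, so $\psi$ has image analytically dense in $\pi(F')$, and hence $\psi(\mG_{m,K}^{\an})$ is analytic Zariski dense in $A_0^{\an}$. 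Because $\pi(\varphi(\mG_{m,K}^{\an}))\subseteq A_0^{\an}$, the morphism $\varphi$ factors through $G_0^{\an}$, where $G_0 := \pi^{-1}(A_0)$ is the closed connected subgroup fitting into $0\to \mG_{a}\times\mG_m^t\to G_0\to A_0\to 0$; in particular $G_0$ again has unipotent rank $1$.

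Now I would apply \autoref{lemma:closuredensegroup} to $G_0$ and to $\varphi\colon \mG_{m,K}^{\an}\to G_0^{\an}$, whose composite to $A_0^{\an}$ is Zariski dense. The lemma yields that $F' = \overline{\varphi(\mG_{m,K}^{\an})}$ equals $G_0^{\an}$ and is the analytification of the algebraic subgroup $G_0$. It then remains to pass to the analytic Zariski closure $\overline{\varphi(\mG_{m,K}^{\an})}^{\,\mathrm{Zar}}$. Since the analytic Zariski topology is coarser than the analytic topology, one has $F' \subseteq \overline{\varphi(\mG_{m,K}^{\an})}^{\,\mathrm{Zar}}$; conversely $G_0^{\an}$ is an analytic subvariety, hence analytic Zariski closed, and contains $\varphi(\mG_{m,K}^{\an})$, so $\overline{\varphi(\mG_{m,K}^{\an})}^{\,\mathrm{Zar}}\subseteq G_0^{\an} = F'$. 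Thus all three sets agree, and undoing the initial translation exhibits the analytic Zariski closure as the analytification of a translate of the algebraic subgroup $G_0$, as desired.

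I expect the main obstacle to be the reduction step, namely verifying that the analytic Zariski closure of the image in $A^{\an}$ is genuinely the analytification of an abelian subvariety. This is where Cherry's result on analytic Zariski closures of subgroups and Berkovich GAGA must be combined, and where one must keep the two distinct closures (analytic versus analytic Zariski) carefully separated, so that density of $\psi(\mG_{m,K}^{\an})$ in $A_0^{\an}$ transfers correctly and so that $\varphi$ provably lands in $G_0^{\an}$ and \autoref{lemma:closuredensegroup} genuinely applies.
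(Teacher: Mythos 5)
Your proposal is correct and follows essentially the same route as the paper: invoke \autoref{coro:ZCimage} to see the analytic closure is an analytic subgroup, use Cherry's \autoref{lemma:analyticlosure} to identify the analytic Zariski closure of the image in $A^{\an}$ with an abelian subvariety, pull back to $G_0=\pi^{-1}(A_0)$, and apply \autoref{lemma:closuredensegroup} to conclude. Your write-up is in fact slightly more careful than the paper's in making explicit the role of Berkovich GAGA in algebraizing the analytic subgroup of $A^{\an}$ and in separating the analytic and analytic Zariski closures at the final step.
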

\begin{proof} 
By \autoref{coro:ZCimage}, the $K$-analytic closure of image $\varphi(\mathbb{G}_m^{\an})$ is an analytic subgroup of $G^{\an}$. Therefore, the image $\pi(\overline{\varphi(\mathbb{G}_m^{\an})})$ is an analytic subgroup of $A^{\an}$. Thus,  the analytic Zariski closure of the image of $\pi(\overline{\varphi(\mathbb{G}_m^{\an})})$ is an abelian subvariety $E^{\an}$ of $A^{\an}$ by \autoref{lemma:analyticlosure}. 

Now, let $F$ be the preimage of $E$ inside $G$, and note that $F$ is a commutative algebraic group (as it is a closed subgroup of $G$). Clearly, the image of the morphism $\varphi\colon\mathbb{G}_m^{\an}\to G^{\an}$ is contained in $F^{\an}$. Now, by construction, the image of the composed morphism $\mathbb{G}_m^{\an}\to F^{\an}\to E^{\an}$ is Zariski dense in $E^{\an}$. Therefore, by \autoref{lemma:closuredensegroup},  the $K$-analytic closure of the image of $\mathbb{G}_m^{\an}$ in $F^{\an}$ is $F^{\an}$. 
Since $F$ is a closed subscheme of $G$ in the Zariski topology, this implies that the Zariski analytic closure of $\varphi(\mathbb{G}_m^{\an})$ is $F^{\an}$ i.e., the analytic Zariski closure of $\varphi(\mG_{m,K}^{\an})$ is the analytification of a translate of an algebraic subgroup of $G$. 
\end{proof}

We are now in a position to prove \autoref{thmx:main2}, which we recall below.  
We remind the reader of the definition of the Lang-like exceptional locus for a closed subvariety of an algebraic $K$-group in \autoref{defn:Langlike}. 

\begin{subtheorem}[= \autoref{thmx:main2}]
Let $K$ be an algebraically closed, complete, non-Archimedean valued field of characteristic zero, let $G$ be a commutative algebraic $K$-group with unipotent rank 1, and let $X$ be a closed subvariety of $G$. 
Then, $X^{\an}$ is $K$-analytically Brody hyperbolic modulo $\Exc'(X)^{\an}$. 
\end{subtheorem}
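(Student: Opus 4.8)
The plan is to verify the two testing conditions of \autoref{lemma:KanBrodyequiv}.(2) with $\Delta=\Exc'(X)$: namely (i) that every non-constant $K$-analytic morphism $\mG_{m,K}^{\an}\to X^{\an}$ factors over $\Exc'(X)^{\an}$, and (ii) that for every abelian $K$-variety $A'$ and every dense open $\sU\subseteq (A')^{\an}$ with $\codim((A')^{\an}\setminus\sU)\geq 2$, every non-constant $K$-analytic morphism $\sU\to X^{\an}$ factors over $\Exc'(X)^{\an}$. Once these are in hand, the theorem follows immediately from \autoref{lemma:KanBrodyequiv}.(2). The genuine content has been isolated in earlier results, so this final step is essentially an assembly.

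For (i), I would regard such a $\varphi\colon\mG_{m,K}^{\an}\to X^{\an}$ as a morphism into $G^{\an}$ through the inclusion $X\hookrightarrow G$ and apply \autoref{thm:ZCalgebraicgroup}, which identifies the analytic Zariski closure of $\varphi(\mG_{m,K}^{\an})$ with $(c+H)^{\an}$ for some translate $c+H$ of an algebraic subgroup $H\subseteq G$; non-constancy of $\varphi$ makes $H$ non-trivial. Since $X^{\an}$ is analytically Zariski closed and contains the image, it contains the closure, so $c+H\subseteq X$. The translation morphism $H\to G$, $h\mapsto c+h$, is then a non-constant morphism from an algebraic $K$-group whose image $c+H$ lies in $X$, so by \autoref{defn:Langlike} one has $c+H\subseteq\Exc'(X)$, and hence $\varphi$ factors over $\Exc'(X)^{\an}$.

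For (ii), since $A'$ is smooth I would use \autoref{coro:extendingtoalgebraic} to extend the non-constant $\psi\colon\sU\to X^{\an}\subseteq G^{\an}$ uniquely to an algebraic $K$-morphism $\overline{\psi}\colon A'\to G$. The proof of \autoref{prop:KawamataequalLanglike} then shows that the image of any morphism from an abelian $K$-variety to a commutative algebraic $K$-group of unipotent rank $1$ is a translate $c'+H'$ of an algebraic subgroup, with $H'$ non-trivial as $\overline{\psi}$ is non-constant. Because $\overline{\psi}$ agrees with $\psi$ on the dense set $\sU$ and $X^{\an}$ is closed, continuity gives $\overline{\psi}((A')^{\an})\subseteq X^{\an}$, so $c'+H'\subseteq X$ and $\overline{\psi}$ factors through $X$; being a non-constant morphism from the algebraic group $A'$ into $X$, its image lies in $\Exc'(X)$ by \autoref{defn:Langlike}, whence $\psi(\sU)\subseteq\overline{\psi}((A')^{\an})\subseteq\Exc'(X)^{\an}$.

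I expect the final assembly to be routine, since the real weight sits in the inputs: the crucial ingredient is \autoref{thm:ZCalgebraicgroup}, controlling the analytic Zariski closure of the image of $\mG_{m,K}^{\an}$ inside $G^{\an}$, together with the extension result \autoref{coro:extendingtoalgebraic} and the structural description of morphisms from abelian varieties furnished by \autoref{prop:KawamataequalLanglike}. Note that no toric rank $0$ hypothesis is needed, consistently with the statement. The only delicate points in writing it up should be the two closure arguments that force the translated subgroups to sit inside $X$, and the bookkeeping of non-constancy ensuring those subgroups are non-trivial, so that the associated translation morphisms are non-constant and their images genuinely lie in $\Exc'(X)$.
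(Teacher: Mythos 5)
Your proposal is correct and follows essentially the same route as the paper: reduce via \autoref{lemma:KanBrodyequiv}.(2), handle the $\mG_{m,K}^{\an}$ case with \autoref{thm:ZCalgebraicgroup} and the definition of $\Exc'(X)$, and handle the abelian case by extending to an algebraic morphism and again invoking the definition of $\Exc'(X)$. Your write-up is in fact slightly more careful than the paper's (spelling out the closure arguments that place the translated subgroups inside $X$, and citing \autoref{coro:extendingtoalgebraic} for algebraicity of the extension); the appeal to \autoref{prop:KawamataequalLanglike} in part (ii) is harmless but unnecessary, since a non-constant morphism from the abelian variety $A'$ into $X$ already has image in $\Exc'(X)$ by definition.
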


\begin{proof}[Proof of \autoref{thmx:main2}]
By \autoref{lemma:KanBrodyequiv}.(2), it suffices to show that every non-constant $K$-analytic morphism $\mathbb{G}_{m,K}^{\an} \to X^{\an}$ factors over $\Exc'(X)^{\an}$, and for every abelian $K$-variety $A$, every dense open $\sU\subseteq A^{\an}$ such that $\codim(A^{\an}\setminus \sU) \geq 2$, every non-constant $K$-analytic morphism $\sU\to X^{\an}$ factors over $\Exc'(X)^{\an}$.

In \autoref{thm:ZCalgebraicgroup}, we showed that the analytic Zariski closure of every non-constant $K$-analytic morphism $\mathbb{G}_{m,K}^{\an} \to G^{\an}$ is the translate of an algebraic subgroup of $G$. 
By the definition of $\Exc'(X)$ (\autoref{defn:Langlike}), every non-constant $K$-analytic morphism $\mathbb{G}_{m,K}^{\an} \to X^{\an}$ factors over $\Exc'(X)^{\an}$. 
\autoref{prop:extendinganalytic} asserts that every $K$-analytic morphism $\sU \to G^{\an}$ uniquely extends to an algebraic $K$-morphism $ A \to G$, and hence the image of non-constant $K$-analytic morphism $\sU\to X^{\an}$ factors over $\Exc'(X)^{\an}$ by the definition of $\Exc'(X)$. 
\end{proof}

To conclude, we will prove the corollaries stated in the introduction.

\begin{subcorollary}[= \autoref{corox:main1}]
Let $K$ be an algebraically closed, complete, non-Archimedean valued field of characteristic zero, let $G$ be a commutative algebraic $K$-group with unipotent rank 1, and let $X$ be a closed subvariety of $G$. 
If $X$ is pseudo-groupless,  then $X^{\an}$ is pseudo-$K$-analytically Brody hyperbolic.
\end{subcorollary}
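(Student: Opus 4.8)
The plan is to deduce this corollary formally from the main non-Archimedean theorem \autoref{thmx:main2} together with the algebraic characterizations of pseudo-grouplessness. The key observation is that \autoref{thmx:main2} already shows $X^{\an}$ is $K$-analytically Brody hyperbolic modulo $\Exc'(X)^{\an}$; therefore, to conclude that $X^{\an}$ is \emph{pseudo}-$K$-analytically Brody hyperbolic, it suffices to exhibit a \emph{proper} closed subscheme witnessing \autoref{defn:KBrody}.(3), and $\Exc'(X)$ is the natural candidate. Thus the whole proof reduces to showing that the hypothesis that $X$ is pseudo-groupless forces $\Exc'(X)$ to be a proper closed subscheme of $X$.

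To carry out this reduction, I would first unwind the hypothesis: since $X$ is pseudo-groupless, there is a proper closed subscheme $\Delta \subsetneq X$ such that $X$ is groupless modulo $\Delta$. Because $X$ is a closed subvariety of an algebraic $k$-group, \autoref{lemma:grouptestgroup} translates this into the concrete statement that every non-constant $k$-morphism $\mG_m \to X$ and every non-constant $k$-morphism $A \to X$ from an abelian $k$-variety factors over $\Delta$, i.e.\ has image contained in $\Delta$. On the other hand, \autoref{lemma:LanglikeGmAV} identifies $\Exc'(X)$ with the Zariski closure of the union of the images of exactly these morphisms. Since each such image lies in the closed set $\Delta$, so does their union, and hence so does its Zariski closure; therefore $\Exc'(X) \subseteq \Delta \subsetneq X$, so $\Exc'(X)$ is a proper closed subscheme.

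With this in hand, the conclusion is immediate: \autoref{thmx:main2} gives that $X^{\an}$ is $K$-analytically Brody hyperbolic modulo $\Exc'(X)^{\an}$, and since $\Exc'(X)$ is a proper closed subscheme, this exhibits $X^{\an}$ as pseudo-$K$-analytically Brody hyperbolic. I do not expect a genuine obstacle here: the analytic content, namely controlling the analytic Zariski closures of images of $\mG_{m,K}^{\an}$ and of big opens of abelian varieties, is entirely absorbed into \autoref{thmx:main2}, and the only point requiring care is the bookkeeping inclusion $\Exc'(X) \subseteq \Delta$, which rests on matching the test objects in \autoref{lemma:grouptestgroup} with those in the description of $\Exc'(X)$ from \autoref{lemma:LanglikeGmAV}. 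If one instead wished to produce the witnessing subscheme as $\Delta$ itself rather than $\Exc'(X)$, one could argue directly from \autoref{lemma:KanBrodyequiv}.(2) and \autoref{thm:ZCalgebraicgroup}, but routing through $\Exc'(X)$ and \autoref{thmx:main2} is cleaner.
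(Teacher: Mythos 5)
Your proposal is correct and follows essentially the same route as the paper: deduce from pseudo-grouplessness (via \autoref{lemma:testgroupless}.(2)/\autoref{lemma:grouptestgroup} and \autoref{lemma:LanglikeGmAV}) that $\Exc'(X)$ is a proper closed subscheme contained in the witnessing $\Delta$, then conclude by \autoref{thmx:main2}. The extra bookkeeping you spell out is exactly what the paper's terser proof leaves implicit.
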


\begin{proof}
If $X$ is pseudo-groupless, then \autoref{lemma:testgroupless}.(2) and \autoref{lemma:LanglikeGmAV} imply that $\Exc'(X)$ is a proper closed subscheme of $X$. 
The result now follows from \autoref{thmx:main2}. 
\end{proof}

\begin{subcorollary}[= \autoref{corox:main0}]
Let $K$ be an algebraically closed, complete, non-Archimedean valued field of characteristic zero, let $G$ be a commutative algebraic $K$-group with unipotent rank 1 and toric rank 0, and let $X$ be a closed subvariety of $G$. 
If $X$ is not fibered by subgroups,  then $X^{\an}$ is pseudo-$K$-analytically Brody hyperbolic.
\end{subcorollary}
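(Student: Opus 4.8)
The plan is to assemble the algebraic and non-Archimedean results already established; essentially no new argument is required. First I would invoke \autoref{thmx:main1}: since $G$ has unipotent rank $1$ and toric rank $0$ and $X$ is not fibered by subgroups (\autoref{defn:notfibered}), the Kawamata locus $Z(X)$ (\autoref{defn:Kawamatalocus}) is a proper closed subscheme of $X$. Next I would apply \autoref{thmx:main0}, which (again using toric rank $0$) identifies the Lang-like exceptional locus $\Exc'(X)$ (\autoref{defn:Langlike}) with $Z(X)$. Combining the two gives that $\Exc'(X)$ is a proper closed subscheme of $X$.

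With properness of $\Exc'(X)$ in hand, I would feed this into the non-Archimedean statement \autoref{thmx:main2}, which holds for any $G$ of unipotent rank $1$ (toric rank $0$ is not needed there) and asserts that $X^{\an}$ is $K$-analytically Brody hyperbolic modulo $\Exc'(X)^{\an}$. Since $\Exc'(X)$ is a proper closed subscheme of $X$, this is exactly the condition in \autoref{defn:KBrody}.(3) witnessing that $X^{\an}$ is pseudo-$K$-analytically Brody hyperbolic, with $\Delta = \Exc'(X)$. The ``in particular'' clause then follows at once because $\Exc'(X) = Z(X)$, so $X^{\an}$ is $K$-analytically Brody hyperbolic modulo the analytification of the Kawamata locus.

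Alternatively, the same conclusion can be reached by chaining two corollaries already recorded: \autoref{corox:main4} yields that $X$ is pseudo-groupless under the not-fibered-by-subgroups hypothesis, and \autoref{corox:main1} then upgrades pseudo-grouplessness to pseudo-$K$-analytic Brody hyperbolicity. There is no genuine obstacle in the present statement, since the substantive content lives entirely in \autoref{thmx:main1} (properness of the Kawamata locus, via Bogomolov's argument) and in \autoref{thmx:main2} (the universal-cover and torsor analysis). The only point requiring a moment's care is the bookkeeping of hypotheses: the identification $\Exc'(X) = Z(X)$ and its properness require toric rank $0$, whereas the hyperbolicity-modulo-$\Exc'(X)$ statement needs only unipotent rank $1$; both hypotheses are in force here, so the implications compose without friction.
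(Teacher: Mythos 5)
Your proposal is correct and follows exactly the same route as the paper's own proof: combine \autoref{thmx:main0} (identifying $\Exc'(X)$ with $Z(X)$), \autoref{thmx:main1} (properness of $Z(X)$ under the not-fibered-by-subgroups hypothesis), and \autoref{thmx:main2} (hyperbolicity modulo $\Exc'(X)^{\an}$). The alternative chaining through \autoref{corox:main4} and \autoref{corox:main1} is a valid shortcut but adds nothing beyond the main argument.
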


\begin{proof}
By \autoref{thmx:main0}, the Kawamata locus $Z(X)$ of $X$ is equal to the Lang-like exceptional locus $\Exc'(X)$ of $X$, and by \autoref{thmx:main2}, we have that $X$ is $K$-analytically Brody hyperbolic modulo $\Exc'(X)$. 
Thus, it suffices to prove that $Z(X)$ is a proper closed subset of $X$, which is precisely \autoref{thmx:main1}. 
\end{proof}

\begin{subcorollary}[= \autoref{corox:main2}]
Let $K$ be an algebraically closed, complete, non-Archimedean valued field of characteristic zero, let $G$ be a commutative algebraic $K$-group with unipotent rank 1, and let $X$ be a closed subvariety of $G$. 
Then, $X$ is groupless over $K$ if and only if $X^{\an}$ is $K$-analytically Brody hyperbolic. 
\end{subcorollary}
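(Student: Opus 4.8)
The plan is to reduce both properties to the same pair of test objects---$\mG_{m,K}$ and abelian $K$-varieties---and then observe that the two resulting conditions coincide. By \autoref{lemma:testgroupless}.(1), $X$ is groupless if and only if every algebraic $K$-morphism $\mG_{m,K}\to X$ is constant and every algebraic $K$-morphism $A\to X$ from an abelian $K$-variety $A$ is constant. By \autoref{lemma:KanBrodyequiv}.(1), $X^{\an}$ is $K$-analytically Brody hyperbolic if and only if every $K$-analytic morphism $\mG_{m,K}^{\an}\to X^{\an}$ is constant and every algebraic $K$-morphism $A\to X$ is constant. The abelian-variety clause is literally identical in the two criteria, so the entire statement reduces to showing that every algebraic morphism $\mG_{m,K}\to X$ is constant if and only if every $K$-analytic morphism $\mG_{m,K}^{\an}\to X^{\an}$ is constant.

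One direction is immediate: the analytification of a non-constant algebraic morphism $\mG_{m,K}\to X$ is a non-constant $K$-analytic morphism $\mG_{m,K}^{\an}\to X^{\an}$, so if $X^{\an}$ is $K$-analytically Brody hyperbolic then every algebraic $\mG_{m,K}\to X$ is constant, and hence (by the reduction above) $X$ is groupless.

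For the converse I would assume $X$ is groupless and take an arbitrary $K$-analytic morphism $\varphi\colon\mG_{m,K}^{\an}\to X^{\an}\subseteq G^{\an}$. The key input is \autoref{thm:ZCalgebraicgroup}, which identifies the analytic Zariski closure of $\varphi(\mG_{m,K}^{\an})$ with the analytification of a translate $g+H$ of an algebraic subgroup $H$ of $G$. Since $X$ is closed in $G$, its analytification $X^{\an}$ is closed in the analytic Zariski topology and contains $\varphi(\mG_{m,K}^{\an})$; hence it contains the whole closure $(g+H)^{\an}$, which forces $g+H\subseteq X$ at the level of schemes. The translation $h\mapsto g+h$ then defines an algebraic morphism from the algebraic $K$-group $H$ into $X$, and grouplessness forces this morphism to be constant---so $H$ is trivial, the closure of the image is a single point, and $\varphi$ is constant. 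This establishes that every $K$-analytic $\mG_{m,K}^{\an}\to X^{\an}$ is constant, completing the converse.

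The main obstacle is entirely concentrated in the converse, and specifically in correctly passing between the analytic and algebraic worlds: all of the genuinely hard non-Archimedean analysis is already packaged inside \autoref{thm:ZCalgebraicgroup}, so the remaining delicate point is the bookkeeping that lets one conclude scheme-theoretic containment $g+H\subseteq X$ from the inclusion of the analytic image into $X^{\an}$. This requires knowing that $X^{\an}$ is closed in the analytic Zariski topology (as the analytification of a Zariski-closed subvariety it is an analytic subvariety, hence closed) so that containment of the image upgrades to containment of its analytic Zariski closure, and then that $(g+H)^{\an}\subseteq X^{\an}$ descends to $g+H\subseteq X$. Once these identifications are in place, the contradiction with grouplessness is purely formal, coming directly from \autoref{defn:grouplessandpseudo}.(1) applied to the algebraic group $H$.
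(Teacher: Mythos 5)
Your proposal is correct and follows essentially the same route as the paper: the paper likewise dispatches the easy direction from the definition and derives the hard direction from \autoref{thm:ZCalgebraicgroup}, merely leaving implicit the bookkeeping (reduction to $\mG_{m,K}$ and abelian varieties, passage from $(g+H)^{\an}\subseteq X^{\an}$ to $g+H\subseteq X$, and the contradiction with grouplessness) that you spell out. Your write-up is a faithful, correctly detailed expansion of the paper's one-line argument.
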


\begin{proof}
By \autoref{defn:KBrody}.(1), we have that if $X^{\an}$ is $K$-analytically Brody hyperbolic, then $X$ is groupless over $K$, and so it suffices to prove that if $X$ is groupless over $K$ then $X^{\an}$ is $K$-analytically Brody hyperbolic. 
This claim follows from \autoref{thm:ZCalgebraicgroup}. 
\end{proof}


  \bibliography{refs.bib}{}
\bibliographystyle{amsalpha}

 \end{document}